\DeclareFontFamily{U}{matha}{\hyphenchar\font45}
\DeclareFontShape{U}{matha}{m}{n}{
      <5> <6> <7> <8> <9> <10> gen * matha
      <10.95> matha10 <12> <14.4> <17.28> <20.74> <24.88> matha12
      }{}
\DeclareSymbolFont{matha}{U}{matha}{m}{n}
\DeclareFontFamily{U}{mathx}{\hyphenchar\font45}
\DeclareFontShape{U}{mathx}{m}{n}{
      <5> <6> <7> <8> <9> <10>
      <10.95> <12> <14.4> <17.28> <20.74> <24.88>
      mathx10
      }{}
\DeclareSymbolFont{mathx}{U}{mathx}{m}{n}
\DeclareMathDelimiter{\vvvert}{0}{matha}{"7E}{mathx}{"17}
\newtheorem{main-theorem}{Theorem}
\newtheorem{proposition}{Proposition}[section]
\newtheorem{corollary}[proposition]{Corollary}
\newtheorem{lemma}[proposition]{Lemma}
\newtheorem{theorem}[proposition]{Theorem}
\newtheorem{definition}[proposition]{Definition}
\theoremstyle{remark}
\newtheorem{remark}[proposition]{Remark}
\newtheorem*{acknowledgements}{Acknowledgements}
\newcommand{\R}{\mathbb{R}}
\newcommand{\C}{\mathbb{C}}
\newcommand{\Z}{\mathbb{Z}}
\newcommand{\N}{\mathbb{N}}
\newcommand{\dd}{\mathrm{d}}
\DeclareMathOperator{\supp}{supp}
\DeclareMathOperator{\sign}{sign}
\newcommand{\marnote}[1]{\marginpar{\footnotesize #1}}
\author{Pedro Caro}
\author{Andoni Garcia}
\address{BCAM - Basque Center for Applied mathematics}
\email{pcaro@bcamath.org}
\email{agarcia@bcamath.org}
\date{\today}
\keywords{Direct and inverse point-source scattering with fix energy, critically-singular potentials, $\delta$-shell potentials.}
\title[Scattering with critically-singular and $\delta$-shell potentials]{Scattering with critically-singular and \\ $\delta$-shell potentials}
\begin{document}

\begin{abstract}
The authors consider a scattering problem for electric potentials 
that have a component which is critically singular in the sense of 
Lebesgue spaces, and a component given by a measure supported on a compact Lipschitz hypersurface. They study direct and inverse point-source scattering
under the assumptions that the potentials are real-valued and 
compactly supported. To solve the direct scattering problem, the 
authors introduce two functional spaces ---sort of Bourgain type 
spaces--- that allow to refine the classical resolvent estimates of 
Agmon and H\"ormander, and Kenig, Ruiz and Sogge. These spaces seem to be very useful 
to deal with the critically-singular and $\delta$-shell components of 
the potentials at the same time. Furthermore, these spaces and their 
corresponding resolvent estimates turn out to have a strong 
connection with the estimates for the conjugated Laplacian used in 
the context of the inverse Calder\'on problem.
In fact, the authors derive 
the classical estimates by Sylvester and Uhlmann, and the more recent 
ones by Haberman and Tataru after some embedding properties
of these new spaces. Regarding the inverse scattering problem,
the authors prove uniqueness for the potentials from point-source 
scattering data at fix energy. To address the question of uniqueness 
the authors combine some of the most advanced techniques in the 
construction of complex geometrical optics solutions.
\end{abstract}

\maketitle


\section{Introduction}
In this paper we study a point-source scattering problem for 
electric potentials that are a combination of
\emph{critically-singular potentials} and
\emph{$\delta$-shell potentials}. More precisely, we are interested 
in real potentials of the form
\begin{equation}
V = V^0 + \alpha \, \dd \sigma
\label{id:V}
\end{equation}
where $V^0$ stands for the critically-singular component of the potential and $\alpha\, \dd\sigma$ is its $\delta$-shell component.
Here $V^0 \in L^{d/2}(\R^d; \R)$, $\sigma$ denotes the surface measure of
$\Gamma$, $\alpha \in L^\infty (\Gamma; \R)$ and $\Gamma$ is a compact hypersurface which is locally described by the graphs of Lipschitz functions.
Additionally, we assume the support of $V$ to be contained in the 
ball $B_0 = \{ x \in \R^d : |x| < R_0 \}$ with $R_0 \geq 1$.
For this class of 
potentials, we study direct and inverse point-source scattering 
in dimension $d \geq 3$. However, we carry out part of our analysis in dimension $d \geq 2$, emphasizing when $d \geq 3$ is required.

\subsection{Direct scattering} The direct scattering theory for 
potentials as $V$ follows the general 
scheme of more regular potentials. First, we consider an incident 
wave $u_{\rm in}$, which solves the equation $(\Delta + \lambda) u_{\rm in} = 0$ in $\R^d \setminus \{y\} $ with $|y| \geq R_0$. Then, 
the scattering solution $u_{\rm sc}$ solves
\[(\Delta + \lambda - V)u_{\rm sc} = V u_{\rm in} \, \, \textnormal{in}\, \R^d,\]
and satisfies the ingoing or outgoing Sommerfeld radiation condition (SRC for short). There are at least two possible ways of showing the 
existence of the scattering solution $u_{\rm sc}$. One based on a Neumann series argument, which consists of solving the problem
\begin{equation}
\left\{
\begin{aligned}
& (\Delta + \lambda)u_n = V u_{n - 1} \qquad \textnormal{in}\, \R^d,\\
& u_n \, \textnormal{satisfying SRC}
\end{aligned}
\right.
\label{pb:Neumann_iteration}
\end{equation}
for each $n \in \N$ with $u_0 = u_{\rm in}$, and showing that
$\sum_{n \in \N} u_n$ makes sense. In this case, the scattering 
solution is given by $u_{\rm sc} = \sum_{n \in \N} u_n $. The problem \eqref{pb:Neumann_iteration} can be solved using an 
appropriate 
inverse, denoted throughout the paper by
$(\Delta + \lambda \pm i 0)^{-1}$ ---the sign $\pm$ accounts for the ingoing and outgoing radiation conditions. Thus,
\[u_n = (\Delta + \lambda \pm i 0)^{-1}(V u_{n-1}) = [(\Delta + \lambda \pm i 0)^{-1} \circ V]^n (u_0),\]
and consequently, in order for $\sum_{n \in \N} u_n$ to converge, 
we only have to see that the linear operator
$(\Delta + \lambda \pm i 0)^{-1} \circ V$ is bounded in certain Banach 
space and its norm is strictly less than $1$ ---in short, it is a contraction. Here and throughout the 
article, $V$ denotes not only the potential but also the operator 
multiplication by $V$.

Another possible way to prove the existence of the scattering 
solution is via Fredholm theory, which consists in choosing $u_{sc}$ as the solution of 
\begin{equation}
\left\{
\begin{aligned}
& (I - (\Delta + \lambda \pm i 0)^{-1} \circ V)u_{\rm sc} = (\Delta + \lambda \pm i 0)^{-1}( V u_{\rm in}) \qquad \textnormal{in}\, \R^d,\\
& u_{\rm sc} \, \textnormal{satisfying SRC},
\end{aligned}
\right.
\label{pb:Fredholm}
\end{equation}
where $I$ stands for the identity operator. In order to solve the 
problem \eqref{pb:Fredholm} using the Fredholm alternative, one needs 
to ensure that $(\Delta + \lambda \pm i 0)^{-1} \circ V$ is compact 
in the space where the solutions $u_{\rm sc}$ will belong to, and 
zero is the only solution to the homogeneous counterpart of the 
problem \eqref{pb:Fredholm}.

To apply any of these two schemes one needs appropriate estimates for the resolvent $(\Delta + \lambda \pm i 0)^{-1}$ according to the character or behaviour of $V$. For example, the well-known resolvent estimate ---due to Agmon \cite{zbMATH03493415}---
\begin{equation}
\lambda^{1/2} \| (\Delta + \lambda \pm i 0)^{-1} f \|_{L^{2,-\delta}} \lesssim \| f \|_{L^{2,\delta}},
\label{in:A}
\end{equation}
with $\delta > 1/2$ and $\| f \|_{L^{2,\pm\delta} (\R^d)}^2 = \int_{\R^d} (1 + |x|^2)^{\pm\delta} |f(x)|^2 \, \dd x $, makes possible to prove that
\[ \| (\Delta + \lambda \pm i 0)^{-1} \circ V \|_{\mathcal{L}(L^{2,-\delta}(\R^d))} \lesssim \lambda^{-1/2}\]
with $V \in L^\infty(\R^d)$ and compactly supported. An 
improved version of Agmon's inequality is the following one ---due to
Agmon and H\"ormander \cite{MR0466902}---
\begin{equation}
\lambda^{1/2} \sup_{j\in\N_0}\big(2^{-j/2}\| (\Delta + \lambda \pm i 0)^{-1} f \|_{L^2(D_j)}\big) \lesssim \sum_{j\in \N_0}2^{j/2}\| f \|_{L^2(D_j)},
\label{in:AH-KPV}
\end{equation}
where $\N_0 = \N \cup \{ 0 \}$, $D_j=\{x\in \R^d : 2^{j-1}<|x|\leq 2^j\}$ for $j \in \N$ and $D_0 = \{ x \in \R^d : |x| \leq 1 \}$.
It is very common to let the norm on the left-hand side be 
denoted by $\vvvert \centerdot \vvvert_\ast$ and the one on the right by
$\vvvert \centerdot \vvvert$. Thus,
\[ \vvvert f \vvvert_\ast = \sup_{j\in\N_0}\big(2^{-j/2}\| f \|_{L^2(D_j)}\big), \qquad \vvvert f \vvvert = \sum_{j\in \N_0}2^{j/2}\| f \|_{L^2(D_j)}. \]
Another important and very celebrated resolvent 
estimate is the following ---due to Kenig, Ruiz and Sogge 
\cite{zbMATH04050093}---
\begin{equation}
\| (\Delta + \lambda \pm i 0)^{-1} f \|_{L^p} \lesssim \lambda^{\frac{d}{2}\big( \frac{1}{p^\prime} - \frac{1}{p} \big) - 1} \| f \|_{L^{p^\prime}},
\label{in:KRS}
\end{equation}
where
$ 2/(d+1) \leq 1/p^\prime - 1/p \leq 2/d $, $1/p + 1/p^\prime = 1$ 
and $d \geq 3$. The inequality \eqref{in:KRS} can be used to show 
that, for the range $ 2/(d+1) \leq 1/p < 2/d $, the inequality
\[ \| (\Delta + \lambda \pm i 0)^{-1} \circ V \|_{\mathcal{L}(L^{2p^\prime})} \lesssim \lambda^{\frac{d}{2p} - 1}\]
holds with $1/p^\prime + 1/p = 1$ and $V \in L^q(\R^d)$ compactly 
supported, where $q>d/2$ and $d \geq 3$. The end-point case
$V \in L^{d/2}(\R^d)$ does not follow directly from either the Neumann series argument ---unless there is smallness for
$\| V \|_{L^{d/2}}$--- or the Fredholm alternative. The Neumann series argument fails in the end-point because we only have
\[ \| (\Delta + \lambda \pm i 0)^{-1} \circ V \|_{\mathcal{L}(L^{p_d} )} \lesssim 1,\]
where $p_d$ is the index of the $\dot{H}^1(\R^d)$
Hardy--Littlewood--Sobolev embedding,
$1/p_d = 1/2 - 1/d$. 
The Fredholm theory does not seem to apply for the lack of the 
compactness, specially because $H^1(B_0)$ is not compactly embedded in $L^{p_d}(B_0)$. However, Lavine and Nachman managed to modify the 
procedure, with a formulation that reminds the operator used to prove 
the Birman--Schwinger principle, in order to reach the end-point. To 
make their argument work one needs to use the inequalities 
\eqref{in:A} and \eqref{in:KRS}. We learnt it from 
\cite{zbMATH06156446}. Another improvement of Agmon's inequality is
\begin{equation}
\| (\Delta + \lambda \pm i 0)^{-1} f \|_{H^{s,-\delta}_\lambda} \lesssim \lambda^{-(1/2 - s)} \| f \|_{H^{-s,\delta}_\lambda},
\label{in:CHL}
\end{equation}
with $0 \leq s \leq 1/2$, $\delta > 1/2$ and
\[ \| f \|_{H^{\pm s, \mp \delta}_\lambda } = \| (\lambda - \Delta)^{\pm s/2} f \|_{L^{2,\mp \delta}},\]
where $(\lambda - \Delta)^{\pm s/2}$ stands for 
the multiplier with symbol $(\lambda + |\xi|^2)^{\pm s/2}$. This 
inequality was proved in \cite{doi:10.1142/S0219530519500015} to 
study scattering in the presence of a class of Gaussian random potentials called microlocally isotropic.
The realizations of such potentials are 
compactly supported and belong to the potential Sobolev spaces 
$L^p_{-s}(\R^d)$ with $0 < s \leq 1/2$ and $d/s \leq p < \infty$, 
almost surely.
Recall that the potential Sobolev space $L^p_s(\R^d)$ with $1 < p < \infty$ and $s \in \R$ is defined by $(I - \Delta)^{-s/2} L^p(\R^d)$ with $(I - \Delta)^{-s/2}$ the Bessel potential with symbol $(1 + |\xi|^2)^{-s/2}$.
From \eqref{in:CHL}, Caro, Helin and Lassas showed in 
\cite{doi:10.1142/S0219530519500015} that, for compactly supported 
potentials in $L^p_{-s}(\R^d)$ with $0 < s \leq 1/2$ and $d/s \leq p < \infty$, one has
\[\| (\Delta + \lambda \pm i 0)^{-1} \circ V \|_{\mathcal{L}(H^{s,-\delta}_\lambda)} \lesssim o( \lambda^{-(1/2 - s)} ).\]
The inequality \eqref{in:CHL} can be easily extended to the range
$ 0 \leq s \leq 1$ 
and then used to 
prove ---by the Neumann series argument--- the existence of scattering solution for potentials as 
\eqref{id:V} with $V^0 \in L^\infty(\R^d; \R)$ and $\| \alpha \|_{L^\infty(\Gamma)}$ 
small enough. Despite, we do not know any reference dealing with this 
problem for every dimension $d \geq 2$, we believe that the truth 
challenge of the scattering theory arises when considering potentials 
that are the combination of critically-singular and $\delta$-shell 
potentials. For such potentials, neither the inequality 
\eqref{in:CHL} ---for the full range $0\leq s\leq 1$--- nor \eqref{in:KRS} with no adjustment 
seem to be enough to develop the scattering theory. On the other 
hand, because of the nature of the term $\alpha \, \dd \sigma$, the 
Lavine--Nachman argument might not be easily adapted for potentials 
of the form in \eqref{id:V}. In fact, in this article we develop an 
alternative path that we explain in the next lines.

The approach we propose is inspired by the most recent works studying the 
Calder\'on problem for dimension $d\geq 3$.
This inverse problem consists in determining the electric 
conductivity of a medium from its corresponding Dirichlet-to-Neumann map. The 
key ingredient in the resolution of this problem is a type of solutions 
called complex geometrical optics (CGO for short), first constructed by 
Sylvester and Uhlmann \cite{zbMATH04015323}. Most of the progresses related 
to this problem have consisted in refining the construction of the CGO 
solutions, which boils down to inverting the conjugated Laplacian
$\Delta + 2 \tau \partial_{x_d} + \tau^2$ for at least
$\tau \geq \tau_0 > 0$. In \cite{zbMATH06145493}, Haberman and Tataru 
introduced a family of Bourgain spaces ---denoted here by $\dot{Y}_\tau^s$ with $s \in \R$--- adapted to this 
differential operator\footnote{Actually, the differential operator 
that Haberman and Tataru considered was
$\Delta + 2 \zeta \cdot \nabla$ with $\zeta = \Re \zeta + i \Im \zeta \in \C^d$ so that
$\zeta \cdot \zeta = 0$, and consequently the family of Bourgain spaces they 
introduced, denoted in their work by  $\dot{X}_\zeta^s$, had 
similar norms to $\dot{Y}_\tau^s$ but with
$p_\zeta(\xi) = - |\xi|^2 + i 2 \zeta \cdot \xi$ instead of $q_\tau$. 
Note that $\Delta + 2 \zeta \cdot \nabla = e^{-i\Im\zeta \cdot x} (\Delta + 2 \Re\zeta \cdot \nabla + |\Re \zeta|^2) \circ e^{i \Im\zeta \cdot x}$ and consequently, if $\Re\zeta = \tau T e_d$ with $T \in SO(d)$, then $ q_\tau (\xi) = p_\zeta (T\xi - \Im\zeta)$.}, whose norms were of the form
\[\| f \|_{\dot{Y}^s_\tau} = \| |q_\tau|^s \widehat{f} \|_{L^2},\]
where $q_\tau (\xi) = - |\xi|^2 + i 2\tau \xi_d + \tau^2$ stands for 
the symbol of the conjugated Laplacian. This family of spaces is 
very convenient for several reasons: the first one is because the 
inverse of the conjugated Laplacian is an isometry
\begin{equation}
\| (\Delta + 2 \tau \partial_{x_d} + \tau^2)^{-1} f \|_{\dot{Y}^s_\tau} = \| f \|_{\dot{Y}^{s-1}_\tau}.
\label{id:isometry}
\end{equation}
The regularity of $V$ in \eqref{id:V} make the index $s = 1/2$ play a 
relevant role. The second reason is that, when functions are 
localized in space, the norm for $s=1/2$ controls the $L^2$ norm of 
such functions with a gain of $\tau^{1/2}$. This fact was shown in 
\cite{zbMATH06145493}:
\begin{equation}
\tau^{1/2} \| \chi f \|_{L^2} \lesssim \| f \|_{\dot{Y}^{1/2}_\tau}
\label{in:HT_localization}
\end{equation}
where $\chi \in \mathcal{S}(\R^d)$. Another reason that makes 
relevant this space is the following embedding ---due to Haberman 
\cite{zbMATH06490961}---
\begin{equation}
\| f \|_{L^{p_d}} \lesssim \| f \|_{\dot{Y}^{1/2}_\tau}.
\label{in:H}
\end{equation}
As a consequence of \eqref{in:H} and 
\eqref{id:isometry}, one can derive the inequality
\begin{equation}
\| (\Delta + 2 \tau \partial_{x_d} + \tau^2)^{-1} f \|_{L^{p_d}} \lesssim \| f \|_{L^{p_d^\prime}}
\label{in:KRS_CGO}
\end{equation}
for $d \geq 3$ with $1/p_d + 1/p_d^\prime = 1$. The inequality 
\eqref{in:KRS_CGO} was proved by Kenig, Ruiz and Sogge \cite{zbMATH04050093} 
as a consequence of \eqref{in:KRS} for $1/p^\prime - 1/p = 2/d$, however, 
this was written in the form of a Carleman estimate.

Our strategy in this article is to 
introduce two spaces $X_\lambda$ and $X_\lambda^\ast$ adapted to the 
resolvent operator $(\Delta + \lambda \pm i 0)^{-1}$ for which analogues of 
\eqref{id:isometry}, \eqref{in:HT_localization} and \eqref{in:H} hold. In 
fact, we will see the resolvent estimate
\begin{equation}
\| (\Delta + \lambda \pm i 0)^{-1} f \|_{X_\lambda^\ast} \lesssim \| f \|_{X_\lambda}
\label{in:resolvent}
\end{equation}
and the embedding
\begin{equation}
\lambda^{1/4} \vvvert f \vvvert_\ast + \lambda^{\frac{d}{2}(\frac{1}{p}-\frac{1}{p_d})}\|f\|_{L^p} \lesssim \| f \|_{X_\lambda^\ast},
\label{in:embedding}
\end{equation}
where $p\in[q_d, p_d]$ with $q_d$ so that $2/q_d=(d-1)/(d+1)$ ---the 
index\footnote{For some computations, it is useful to note that $1/q_d = 1/2 - 1/(d+1)$, that is $q_d = p_{d+1}$} in the extension form of the Tomas--Stein theorem.
From the inequalities \eqref{in:resolvent} and \eqref{in:embedding}, 
one can prove that $(\Delta + \lambda \pm i 0)^{-1} \circ V$ is a 
contraction on $X_\lambda^\ast$ for $\lambda \geq \lambda_0 > 0$ 
under a smallness assumption on $\alpha$. This would allow to 
construct the scattering solution $u_{\rm sc}$ by the Neumann series 
argument. In order to avoid assuming 
smallness for $\alpha$, we adopt a strategy that combines the Neumann 
series argument and the Fredholm alternative. First, we use the 
Neumann series argument to construct the resolvent
$(\Delta + \lambda \pm i 0 - V^0)^{-1}$ and prove its boundedness from $X_\lambda$ to $X_\lambda^\ast$. Then, we use the Fredhlom theory to solve the problem
\begin{equation*}
\left\{
\begin{aligned}
& \big(I - (\Delta + \lambda \pm i 0 - V^0)^{-1} \circ (\alpha\, \dd \sigma)\big)u_{\rm sc} = (\Delta + \lambda \pm i 0 - V^0)^{-1}( V u_{\rm in}) \quad \textnormal{in}\, \R^d,\\
& u_{\rm sc} \, \textnormal{satisfying SRC}.
\end{aligned}
\right.
\end{equation*}
Two ingredients are required to apply the Fredholm theory. The first one is the compactness from $X_\lambda^\ast$ to $X_\lambda$ of the operator multiplication by $\alpha\,\dd\sigma$. The second ingredient is a unique continuation property for an equation with a potential as in \eqref{id:V}. Here, we derive this unique continuation using a Carleman estimate that Caro and Rogers proved in \cite{zbMATH06534426} for the Bourgain spaces.

Intuitively, the elements of $X_\lambda^\ast$ should be thought as 
functions with some integrability whose weak (up to first order) derivatives have also certain (but different) integrability properties.
In fact,
$$X^\ast_\lambda \subset L^p_{d(1/p - 1/p_d)}(\R^d) \cap (I - \Delta)^{-1/2} B_\ast$$
with $p \in [q_d, p_d]$, $L^p_{d(1/p - 1/p_d)}(\R^d)$ the potential Sobolev space with differentiability index $d(1/p - 1/p_d)$ and integrability index $p$, and $B_\ast$ the Banach space defined by the norm $\vvvert \centerdot \vvvert_\ast$ ---this inclusion follows by changing slightly the proofs of the  lemmas \ref{lem:Lpembedding} and \ref{lem:L2embedding} in the section \ref{sec:resolvent}.
Contrarily, the elements of $X_\lambda$ are actual distributions, an example of them are elements of
$$L^{p^\prime}_{-d(1/p_d^\prime - 1/p^\prime)}(\R^d) + (I - \Delta)^{1/2} B$$
with $p^\prime$ and $p_d^\prime$ the dual exponents of $p$ and $p_d$, and $B$ the Banach space defined by the norm $\vvvert \centerdot \vvvert$. Actually, the latter space is included in $X_\lambda$.
Despite the nature of the spaces $X_\lambda^\ast$ and $X_\lambda$, the inequality \eqref{in:resolvent} is somehow equivalent to 
a combination of \eqref{in:AH-KPV} and \eqref{in:KRS} (see the 
remarks \ref{rem:Ylambda} and \ref{rem:Zlambda} in section \ref{sec:resolvent}). However, the 
inequality \eqref{in:resolvent} is 
better adapted than \eqref{in:AH-KPV} and \eqref{in:KRS} to deal with 
potentials $V$ as in \eqref{id:V}, in 
this sense our new estimate is a refinement of the classical ones.
The ideal 
situation would be to define the spaces $X_\lambda^\ast$ and
$X_\lambda$ 
through the  $L^2$ norms in the frequency side with the weights $\sqrt{m_\lambda}$ and $1/\sqrt{m_\lambda}$
respectively, where $m_\lambda(\xi) = |\lambda-|\xi|^2|$. However, it 
is not as straightforward as this since $1/\sqrt{m_\lambda}$
is not in 
$L^2_{\textnormal{loc}}(\R^d)$ ---see how we overcome this issue in the 
definitions \ref{def:Ylambda}, \ref{def:Zlambda} and 
\ref{def:Xlambda} in the section \ref{sec:scattering}.

Our approach provides a suitable framework to construct the 
scattering solution using a strategy that combines the Neumann series and the Fredholm alternative. Given $y \in \R^d$, consider $u_{\rm in}^\pm(x, y) = \Phi_\lambda^\pm (y - x) $ with
\[\Phi^\pm_\lambda = (\Delta + \lambda \pm i0)^{-1} \delta_0,\]
where $\delta_0$ is the Dirac mass at $0$.

\begin{main-theorem}\label{th:scattering} \sl Consider $d \geq 3$. There exists a  positive $\lambda_0 = \lambda_0 (d, V^0, R_0)$ so that, 
we can find $u_{\rm sc}^\pm (\centerdot, y) \in X_\lambda^\ast$ solving the problem
\begin{equation*}
\left\{
\begin{aligned}
& (\Delta + \lambda - V)u_{\rm sc}^\pm(\centerdot, y) = V u_{\rm in}^\pm(\centerdot, y) & & \textnormal{in}\, \R^d,\\
& \sup_{|x|=R} \Big| \frac{x}{|x|} \cdot \nabla u_{\rm sc}^\pm (x, y) \mp i \lambda^{1/2} u_{\rm sc}^\pm (x, y)  \Big| = o_y(R^{-\frac{d-1}{2}}) & & R \geq R_0,
\end{aligned}
\right.
\end{equation*}
for every $\lambda \geq \lambda_0$ and $y \in \R^d \setminus B_0$.
Moreover, $u_{\rm sc}^\pm (\centerdot, y)$ is the only solution of the previous problem.
\end{main-theorem}

\begin{remark} For dimension $d=2$, we could have used the Neumann series argument and our estimates to state that there exist $\varepsilon = \varepsilon(d, \Gamma, R_0)$ and $\lambda = \lambda(d, V^0, R_0)$ so that, if $\| \alpha \|_{L^\infty(\Gamma)} \leq \varepsilon$, then there exists a unique scattering solution $u_{\rm sc}^\pm (\centerdot, y) \in X_\lambda^\ast$ for every $\lambda \geq \lambda_0$. We have not combined the Neumann series argument and the Fredholm alternative in this situation because we have not found an appropriate unique continuation for a potential $V$ as in \eqref{id:V} for $d=2$.
\end{remark}

\subsection{Inverse scattering}
The inverse point-source problem we study in this paper consists 
in determining a potential $V$ as in \eqref{id:V} from the knowledge of $u_{\rm sc}^\pm|_{\partial B_0 \times \partial B_0}$ for a fixed energy $\lambda$, where $u_{\rm sc}^\pm(\centerdot, y)$ is the scattering solution of the theorem \ref{th:scattering} yielded by the incident wave $u_{\rm in}^\pm(\centerdot, y) = \Phi_\lambda^\pm (y - \centerdot) $.
%

\begin{main-theorem}\label{th:uniqueness} \sl Consider $d \geq 3$. Let $V_1 = V^0_1 + \alpha_1 \, \dd \sigma_1 $ and $V_2 = V^0_2 + \alpha_2 \, \dd \sigma_2$ be two electric potentials as in \eqref{id:V}, where $\sigma_j$ is the surface measure of
$\Gamma_j$. Let $\lambda_0 = \lambda_0 (d, V^0_1, V^0_2, R_0)$ be so that the scattering solutions $u^\pm_{{\rm sc},1} (\centerdot, y)$ and $u^\pm_{{\rm sc},2} (\centerdot, y)$ of the theorem \ref{th:scattering} with potentials $V_1$ and $V_2$ are available for every $\lambda \geq \lambda_0$. Then, 
\[u^\pm_{{\rm sc},1}|_{\partial B_0 \times \partial B_0} = u^\pm_{{\rm sc},2}|_{\partial B_0 \times \partial B_0} \, \textsl{for a fixed}\, \lambda \geq \lambda_0 \,\Longrightarrow \, V_1 = V_2.\]
\end{main-theorem}

\begin{remark}\sl Note that $V_1 = V_2$ implies that $V_1^0 = V_2^0$, 
$\Gamma_1 = \Gamma_2$ and $\alpha_1 = \alpha_2$. Indeed, we can test 
$V_1 - V_2$ with a sequence of functions $\phi_n$ that concentrates around $\Gamma_1 \cup \Gamma_2$ so that
$\int_{\R^d} (V_1^0 - V_2^0) \phi_n $ vanishes as the functions concentrates around this set of measure zero. This implies that
$\Gamma_1 = \Gamma_2$ and consequently that $\alpha_1 = \alpha_2$. At this point obviously $V^0_1 = V^0_2$.
\end{remark}

To address the question of uniqueness for this fixed-energy inverse scattering problem, we adopt the approach that
H\"ahner and Hohage followed in \cite{zbMATH01670370} to prove some 
stability estimates for a similar problem for the 
acoustic equation. We start by proving an \textit{orthogonality} 
relation in the spirit of Alessandrini's identity for the Calder\'on 
problem, that is,
\begin{equation}
\langle (V_1 - V_2) v_1, v_2 \rangle = 0
\label{id:integral}
\end{equation}
for all $v_j$ solution of $(\Delta + \lambda - V_j)v_j = 0$ in $B_0$.
Then, we construct CGO solutions ---as 
Sylvester and Uhlmann did in \cite{zbMATH04015323}--- in the 
form
\[v_j(x) = e^{\zeta_j \cdot x} (1 + w_j(x)),\]
where $\zeta_j \in \C^d$ so that $\zeta_j \cdot \zeta_j = - \lambda$ 
and $\zeta_1 + \zeta_2 = -i\kappa$ for an arbitrarily given
$\kappa \in \R^d$ ---which is possible in dimension $d \geq 3$---, and the correction term $w_j$ vanishes in a 
specific sense when $|\zeta_j|$ grows. Because of the $\delta$-shell components $\alpha_1 \, \dd \sigma_1$ and $\alpha_2 \, \dd \sigma_2$
of the potentials $V_1$ and $V_2$, we follow the ideas introduced by 
Haberman and Tataru in \cite{zbMATH06145493} in order to ensure the 
asymptotic behaviour of $w_j$ when $|\zeta_j|$ grows. However, 
since no smallness is assumed for $\alpha_j$, we also  
require at this stage the Carleman estimate proved by Caro and Rogers 
in \cite{zbMATH06534426}. The critically-singular components $V^0_1$ 
and $V^0_2$ can be treated thanks to the embedding \eqref{in:H} due 
to Haberman \cite{zbMATH06490961}. Finally, we 
plug in the CGO solutions to \eqref{id:integral} and make $|\zeta_1|$ 
and $|\zeta_2|$ grow. Thus, we can conclude that the Fourier transform of $V_1 - V_2$ is identically zero, that is,
\[\mathcal{F} \big(V_1 - V_2 \big)(\kappa) = 0, \qquad \forall \kappa \in \R^d.\]
The injectivity of the Fourier transform allows us to conclude that $V_1 = V_2$.

\subsection{Some previous results}
The spaces
$$L^p_{d(1/q_d - 1/p_d)}(\R^d) \cap (I - \Delta)^{-1/2} B_\ast \quad \textnormal{and} \quad L^{p^\prime}_{-d(1/p_d^\prime - 1/q_d^\prime)}(\R^d) + (I - \Delta)^{1/2} B$$
are the spaces chosen by Ionescu and Schlag in \cite{zbMATH05014647} to prove the limit absorption principle for a large class of perturbations. It turns out that their basic estimate ---with an explicit control in $\lambda$--- can be derived from \eqref{in:resolvent} and the relation of these spaces with $X_\lambda^\ast$ and $X_\lambda$. Another resolvent estimate that seems to follow from ours, after an adjustment in the norm of $X_\lambda^\ast$, is the one due to Ruiz and Vega ---Theorem 1.2 in \cite{zbMATH00055931}. See also the work of Goldberg and Schlag \cite{zbMATH02138311}.

Regarding previous results on inverse scattering with $\delta$-shell potentials, see the work of Mantile, Posilicano and Sini \cite{zbMATH06933409} in dimension $d = 3$. The point source-scattering have been previously studied in \cite{zbMATH01670370} by H\"ahner and Hohage in acoustic media, and by Ola and Somersalo \cite{zbMATH00926323} for Maxwell equations.

The literature on inverse scattering is rather wide and we cite only a few works where the measurements are assumed to be modelled by the far-field pattern. Colton and Kirsch introduced in \cite{zbMATH00939857} the linear sampling method to determine the support of an imperfect conductor. Uniqueness and reconstruction for the inverse scatteing problem in an acoustic medium was proved by Nachman \cite{zbMATH04105476}, Novikov \cite{zbMATH04129351}, and Ramm \cite{zbMATH04028038}. The stability question was first addressed by Stefanov \cite{zbMATH00001021}, and then improved by H\"ahner and Hohage \cite{zbMATH01670370}.

\subsection{The outline of the paper}
The section \ref{sec:scattering} is 
devoted to the study of the direct scattering from a point source. We first 
pose rigorously the point-source scattering problem. Then, we
introduce the spaces $X_\lambda$ and $X_\lambda^\ast$, and state rigorously the inequalities \eqref{in:resolvent} and  \eqref{in:embedding}.
Afterwards, we construct the resolvent $(\Delta + \lambda \pm i0 - V^0)^{-1}$ by a Neumann series argument and then we use the Fredholm alternative to prove the existence of the scattering solution.
The inverse problem is considered in the section 
\ref{sec:inverse_scattering}.
First, we prove a couple of lemmas that are required for the orthogonality identity \eqref{id:integral}. Then, we construct the CGO solutions and show the uniqueness of the potentials. 
In the section \ref{sec:resolvent}, we first state a couple of refined resolvent 
estimates in the spirit of \eqref{in:resolvent}.
There, we also provide a rather simple proofs of \eqref{in:AH-KPV} and 
\eqref{in:KRS}. We find specially interesting the proof of \eqref{in:KRS}, 
where we do not use Stein's interpolation theorem and reach the endpoint in the
case $d = 2$. The last part of the section \ref{sec:resolvent} contains 
some connections of our refined resolvent estimates with the estimates that 
Sylvester and Uhlmann used to construct the CGO solutions, as well as, the 
inequalities \eqref{in:HT_localization} and \eqref{in:H} 
proved by Haberman and Tataru. The article finishes with 
an appendix where
we address the most basic questions of the functional spaces $X_\lambda^\ast$ and $X_\lambda$.

The section \ref{sec:resolvent} may be read independently of the sections \ref{sec:scattering} and \ref{sec:inverse_scattering}, only some notations and definitions from the previous sections would be required. However, the sections \ref{sec:scattering} and \ref{sec:inverse_scattering} are full of references and calls to the section \ref{sec:resolvent}. Thus, if readers choose to follow the order proposed by the authors, they would get a global picture of the direct and inverse problems from the sections \ref{sec:scattering} and \ref{sec:inverse_scattering} postponing the details for the section \ref{sec:resolvent}.

\begin{acknowledgements} The authors thank Alberto Ruiz for his valuable comments. The authors were partially funded by BERC 2018-2021 and Severo Ochoa SEV-2017-0718. Additionally, PC is also funded by Ikerbasque and PGC2018-094528-B-I00, and AG by Juan de la Cierva fellowship IJCI-2015-25009.
\end{acknowledgements}

\section{Scattering theory} \label{sec:scattering}
In point-source scattering theory, the incident wave is typically 
chosen as certain fundamental solutions. More precisely, given $y \in \R^d$, the incident wave is given by $u_{\rm in}^\pm(x, y) = \Phi_\lambda^\pm (y - x) $ with
\[\Phi^\pm_\lambda = (\Delta + \lambda \pm i0)^{-1} \delta_0,\]
where $\delta_0$ is the Dirac mass at $0$. The previous identity is understood as
\begin{equation*}
\langle\Phi^\pm_\lambda, f\rangle= \frac{1}{(2\pi)^{d/2}} \Big[ \lim_{\epsilon\to 0}\int_{m_\lambda>\epsilon}\frac{\widehat{f}(\xi)}{\lambda-|\xi|^2} \, \dd \xi \mp i\frac{\pi}{2\lambda^{1/2}}\int_{S_\lambda}\widehat{f}(\xi) \, \dd S_\lambda(\xi) \Big]
\end{equation*}
for every $f\in \mathcal{S}(\R^d)$, where $S_\lambda=\{\xi\in \R^d : |\xi|=\lambda^{1/2}\}$ and $\dd S_\lambda$ stands for the volume form on $S_\lambda$. One can check that $\Phi^\pm_\lambda $ is the fundamental solution solving the problem
\begin{equation*}
\left\{
\begin{aligned}
& \Delta \Phi_\lambda^\pm  + \lambda \Phi_\lambda^\pm = \delta_0 & & \textnormal{in}\, \R^d,\\
& \frac{x}{|x|} \cdot \nabla \Phi_\lambda^\pm (x) \mp i \lambda^{1/2}  \Phi_\lambda^\pm (x) = \mathcal{O} (|x|^{-\frac{d + 1}{2}}) & & \textnormal{for}\, |x|\geq 1.
\end{aligned}
\right.
\end{equation*}
The last condition corresponds to either the ingoing or the outgoing
SRC. Our goal in this section is to construct the scattering solution 
$u_{\rm sc}^\pm(\centerdot, y)$ solving the problem
\begin{equation}
\left\{
\begin{aligned}
& (\Delta + \lambda - V)u_{\rm sc}^\pm(\centerdot, y) = V u_{\rm in}^\pm(\centerdot, y) & & \textnormal{in}\, \R^d,\\
& \sup_{|x|=R} \Big| \frac{x}{|x|} \cdot \nabla u_{\rm sc}^\pm (x, y) \mp i \lambda^{1/2} u_{\rm sc}^\pm (x, y)  \Big| = o_y(R^{-\frac{d-1}{2}}) & & R \geq 1,
\end{aligned}
\right.
\label{pb:scattering}
\end{equation}
with $ y \in \R^d \setminus B_0$ and $V$ as in \eqref{id:V}.

As we mention in the introduction, the scattering solution $u_{\rm sc}(\centerdot, y)$ will 
be constructed in a space $X_\lambda^\ast$. Ideally this space would be defined through 
the symbol $\sqrt{m_\lambda}$, with $m_\lambda(\xi) = |\lambda-|\xi|^2|$, however, this is not possible. If $X_\lambda^\ast$ were defined by the symbol $\sqrt{m_\lambda}$, its
pre-dual $X_\lambda$ would have to be defined by the symbol $1/ \sqrt{m_\lambda}$, which is not locally square-integrable around
the \textit{critical hypersurface} $S_\lambda$. For that reason, 
given $\lambda > 0$, the integer $k_\lambda$ so that
$2^{k_\lambda - 1} < \lambda^{1/2} \leq 2^{k_\lambda} $ will play a 
special role. Thus, to avoid the \textit{critical frequencies} around
$S_\lambda$, we introduce the set
\[I = \{ k_\lambda - 2, k_\lambda - 1, k_\lambda, k_\lambda + 1  \}\]
and use the Littlewood--Paley projectors $P_k$ and $P_{\leq k}$. To define them, it is enough to consider $\phi \in \mathcal{S}(\R^d)$ supported in $\{ \xi \in \R^d: |\xi| \leq 2 \}$ and $\phi (\xi) = 1$ whenever $|\xi|\leq 1$, and the function $\psi(\xi) = \phi(\xi) - \phi(2\xi)$. Then,
$$\widehat{P_k f} (\xi) = \psi(\xi/2^k) \widehat{f}(\xi), \qquad \widehat{P_{\leq k} f} (\xi) = \phi(\xi/2^k) \widehat{f}(\xi).$$
In this paper, the projector $P_{\leq k_\lambda - 3}$ will have a relevant importance, and will be denoted for simplicity by $P_{<I}$

The space $X_\lambda^\ast$ will be introduced as the dual of
$X_\lambda$ which in turn is defined as the sum of two spaces
$Y_\lambda$ and $Z_\lambda$. These later spaces with their 
corresponding duals $Y_\lambda^\ast$ and $Z_\lambda^\ast$ come forth 
to refine the estimates \eqref{in:AH-KPV} and \eqref{in:KRS}, 
respectively.
\begin{definition}\label{def:Ylambda}\sl Let $Y_\lambda$ be the set 
of $f\in\mathcal{S}^\prime(\R^d)$  such that 
\begin{equation*}
\|m_\lambda^{-1/2}\widehat{P_{<I} f}\|_{L^2}^2 + \sum_{k\in I} \lambda^{-1/2}\vvvert P_k f \vvvert^2 +\sum_{k > k_\lambda + 1}\|m_\lambda^{-1/2}\widehat{P_k f}\|_{L^2}^2 <\infty,
\end{equation*}
where $m_\lambda(\xi) = |\lambda-|\xi|^2|$.
For $f \in Y_\lambda$, define the norm
\begin{equation*}
\|f\|_{Y_\lambda}^2 = \|m_\lambda^{-1/2}\widehat{P_{<I} f}\|_{L^2}^2 + \sum_{k\in I} \lambda^{-1/2}\vvvert P_k f \vvvert^2 +\sum_{k > k_\lambda + 1}\|m_\lambda^{-1/2}\widehat{P_k f}\|_{L^2}^2.
\end{equation*}
\end{definition}

To introduce the space $Z_\lambda$, it is convenient to remember that 
$q_d$ is $2/q_d=(d-1)/(d+1)$ for $d\geq 2$, while $p_d$ is
$1/p_d = 1/2 - 1/d$ if $d \geq 3$. In dimension $d = 2$, we write
$ p_2=\infty$.
\begin{definition}\label{def:Zlambda}\sl Let $Z_{\lambda, p^\prime}$ with $p^\prime \in [p_d^\prime, q_d^\prime]$ be the set 
of $g\in\mathcal{S}^\prime(\R^d)$ such that 
\begin{equation*}
\|m_\lambda^{-1/2}\widehat{P_{<I} g}\|_{L^2}^2 + \sum_{k\in I} \lambda^{d(\frac{1}{p^\prime}-\frac{1}{p_d^\prime})}\|P_k g\|_{L^{p^\prime}}^2 +\sum_{k > k_\lambda + 1}\|m_\lambda^{-1/2}\widehat{P_k g}\|_{L^2}^2 <\infty,
\end{equation*}
where $m_\lambda(\xi) = |\lambda-|\xi|^2|$.
For $g \in Z_{\lambda, p^\prime}$, define the norm
\begin{equation*}
\|g\|_{Z_{\lambda, p^\prime}}^2 = \|m_\lambda^{-1/2}\widehat{P_{<I} g}\|_{L^2}^2 + \sum_{k\in I} \lambda^{d(\frac{1}{p^\prime}-\frac{1}{p_d^\prime})}\|P_k g\|_{L^{p^\prime}}^2 +\sum_{k > k_\lambda + 1}\|m_\lambda^{-1/2}\widehat{P_k g}\|_{L^2}^2.
\end{equation*}
Here $q_d^\prime$ and $p_d^\prime$ are the dual exponents of $q_d$ and $p_d$ respectively, in particular, $p_2^\prime = 1$. For simplicity, we write $Z_\lambda$ instead of $Z_{\lambda, q_d^\prime}$.
\end{definition}
\begin{remark} By Bernstein's inequality
\[ \|g\|_{Z_\lambda} \lesssim \|g\|_{Z_{\lambda, p^\prime}} \lesssim \|g\|_{Z_{\lambda, p_d^\prime}},  \]
and therefore,
\[ Z_{\lambda, p_d^\prime} \subset Z_{\lambda, p^\prime} \subset Z_\lambda. \]
\end{remark}

Now, we are in position to state the precise definitions of the 
spaces $X_\lambda$ and $X_\lambda^\ast$.
\begin{definition}\label{def:Xlambda}\sl Let $X_\lambda$ be the set 
of $h \in\mathcal{S}^\prime(\R^d)$ such that $h = f + g$ with
$f \in Y_\lambda$ and $g \in Z_\lambda$. For $h \in X_\lambda$, define the norm
\[\| h \|_{X_\lambda} = \inf \{ \| f \|_{Y_\lambda} + \| g \|_{Z_\lambda} : h = f + g \}.\]
Note that the infimum is taken over all representation $h = f + g$ with $f \in Y_\lambda$ and $g \in Z_\lambda$.

The Banach space $(X_\lambda^\ast, \| \centerdot \|_{X_\lambda^\ast} )$ is defined as the dual space of $(X_\lambda, \| \centerdot \|_{X_\lambda} )$.
\end{definition}

To construct the solutions in this functional analytical framework, these 
spaces have to satisfy some basic properties that are stated below and proved 
in the appendix \ref{app:framework}.

\begin{proposition}\label{prop:density} \sl The Schwartz class $\mathcal{S}(\R^d)$ is dense in $Y_\lambda$ and $Z_\lambda$ with their corresponding norms. In particular, $\mathcal{S}(\R^d)$ is also dense in $X_\lambda$.
\end{proposition}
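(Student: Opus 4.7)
The plan is to use the Littlewood--Paley structure built into the norms and approximate in two stages: first truncate high frequencies to reduce to a band-limited tempered distribution, then approximate each block of that distribution by a Schwartz function.

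First, given $f\in Y_\lambda$, set $f_N=P_{\leq N}f$ for $N\geq k_\lambda+2$. Each block $P_j(f-f_N)$ has Fourier transform $\psi(\xi/2^j)(1-\phi(\xi/2^N))\widehat f(\xi)$, which vanishes for $j\leq N-1$ (disjoint supports) and is bounded pointwise in modulus by $|\psi(\xi/2^j)\widehat f(\xi)|$. Plugging this into the definition of $\|\cdot\|_{Y_\lambda}$ yields $\|f-f_N\|_{Y_\lambda}^2 \leq \sum_{j\geq N}\|m_\lambda^{-1/2}\widehat{P_j f}\|_{L^2}^2$ (all critical-block terms vanish once $N$ is large). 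This is the tail of a convergent series, hence tends to $0$, and we are reduced to approximating the band-limited tempered distribution $f_N$.

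For the band-limited part, decompose $f_N = P_{<I}f + \sum_{k\in I}P_k f + \sum_{k_\lambda+1<k\leq N+1}P_k f_N$. Each piece is a $C^\infty$ function (convolution of a distribution with a Schwartz kernel). The low block $P_{<I}f$ lies in $L^2$ with $\widehat{P_{<I}f}$ compactly supported in $\{|\xi|\leq 2^{k_\lambda-2}\}$, where $m_\lambda^{-1/2}$ is bounded; approximate $\widehat{P_{<I}f}$ in $L^2$ by $C^\infty_c$ functions supported strictly inside $\{|\xi|\leq 2^{k_\lambda-3}\}$, so that the inverse Fourier transforms are Schwartz and produce no contribution to the remaining blocks. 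Each high block $P_k f_N$ ($k_\lambda+1<k\leq N+1$) is an $L^2$ function whose Fourier transform is supported in the dyadic annulus; approximate $\widehat{P_k f_N}$ in $L^2$ by $C^\infty_c$ functions supported in the same annulus. The critical blocks ($k\in I$) live in the Agmon--H\"ormander space $(B,\vvvert\cdot\vvvert)$: first multiply by a smooth physical cutoff $\chi(\cdot/R)$, using $\vvvert(1-\chi(\cdot/R))F\vvvert \leq \sum_{j:\,2^j\gtrsim R}2^{j/2}\|F\|_{L^2(D_j)}\to 0$, and then mollify to get a compactly supported, smooth (hence Schwartz) approximation. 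Summing the Schwartz approximations of all blocks gives a single Schwartz function $\varphi_n$ with $\|f_N-\varphi_n\|_{Y_\lambda}\to 0$; the cross-contributions of each approximant to non-matching blocks come only from the overlap of adjacent annuli and are controlled by the corresponding block norm of the approximant itself.

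For $Z_\lambda$ (and $Z_{\lambda,p'}$ generally) the argument is identical, with $\vvvert\cdot\vvvert$ in the critical blocks replaced by $\|\cdot\|_{L^{p'}}$ and the approximation of the critical blocks carried out via density of $\mathcal{S}(\R^d)$ in $L^{p'}(\R^d)$ for $1\leq p'<\infty$ (combined physical cutoff plus mollification). Finally, density in $X_\lambda=Y_\lambda+Z_\lambda$ is immediate: given $h=f+g$ with $f\in Y_\lambda$ and $g\in Z_\lambda$, Schwartz approximations $f_n\to f$ and $g_n\to g$ yield $f_n+g_n\in\mathcal{S}(\R^d)$ with $\|h-(f_n+g_n)\|_{X_\lambda}\leq \|f-f_n\|_{Y_\lambda}+\|g-g_n\|_{Z_\lambda}\to 0$. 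The main obstacle is the critical-frequency blocks: the physical-space cutoff required to produce a Schwartz function from a $\vvvert\cdot\vvvert$- or $L^{p'}$-representative smears its Fourier localization and so pushes mass into adjacent blocks, which must be absorbed using the norm control of the approximants on those blocks. Beyond this technical bookkeeping, the argument is a standard two-step truncation/mollification.
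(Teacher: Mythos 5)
Your overall architecture (high-frequency truncation, then block-by-block approximation) is sound, and it is considerably more explicit than what the paper actually records --- the paper relegates the statement to the lemmas \ref{lem:Yspaces} and \ref{lem:Zspaces} in the appendix and justifies them only by analogy with non-homogeneous Besov spaces. Your first truncation step is correct, as is the treatment of the critical blocks in isolation (spatial cutoff plus mollification in the Agmon--H\"ormander norm $\vvvert \centerdot \vvvert$, resp.\ in $L^{p^\prime}$). The gap sits at the interface between the critical and the non-critical blocks, which is exactly where these spaces fail to be ordinary Besov spaces. First, a concrete slip: $\widehat{P_{<I} f} = \phi(\centerdot/2^{k_\lambda-3})\widehat{f}$ is supported in $\{|\xi|\leq 2^{k_\lambda-2}\}$, not in $\{|\xi|\leq 2^{k_\lambda-3}\}$, so it cannot be approximated in $L^2$ by functions supported strictly inside the smaller ball; in general it has positive mass on the annulus $\{2^{k_\lambda-3}<|\xi|\leq 2^{k_\lambda-2}\}$, which lies inside the support of $\psi(\centerdot/2^{k_\lambda-2})$, i.e.\ inside the critical annulus of $k=k_\lambda-2\in I$. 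The same overlap occurs on the high side: the annulus of $P_{k_\lambda+2}$ meets that of $P_{k_\lambda+1}\in I$ in a set of positive measure.

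Once the low- and high-block approximants are permitted to have Fourier support meeting a critical annulus --- and they must be, by the above --- their error leaks into a block where the $Y_\lambda$ norm is $\lambda^{-1/4}\vvvert\centerdot\vvvert$ (resp.\ a weighted $L^{p^\prime}$ norm for $Z_\lambda$), and an $L^2$-small error is neither $\vvvert\centerdot\vvvert$-small ($\vvvert F \vvvert$ can be infinite for $F\in L^2$) nor $L^{p^\prime}$-small for $p^\prime<2$ without spatial localization. So your closing claim, that cross-contributions to non-matching blocks ``are controlled by the corresponding block norm of the approximant itself'', fails in precisely this direction; it is the crux of the proof rather than bookkeeping. (The opposite direction is fine: $\|F\|_{L^2}\leq\vvvert F\vvvert$, and Bernstein's inequality bounds $\|P_k F\|_{L^2}$ by $2^{kd(1/p^\prime-1/2)}\|P_k F\|_{L^{p^\prime}}$, so the critical-block approximants do not spoil the weighted-$L^2$ blocks.) The gap is repairable: for $k^\prime\in I$ one has $P_{k^\prime}P_{<I}f=P_{<I}P_{k^\prime}f$ and $P_{k^\prime}P_{k_\lambda+2}f=P_{k_\lambda+2}P_{k^\prime}f$, and the projectors are bounded on the space defined by $\vvvert\centerdot\vvvert$ (and on $L^{p^\prime}$), so the interface content is in fact controlled by $\vvvert P_{k^\prime}f\vvvert$; but the low and high blocks must then be approximated simultaneously in $L^2$ and, at the interface frequencies, in the stronger critical norm --- a step your construction does not perform.
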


\begin{proposition} \label{prop:Xlambda}\sl The pair $(X_\lambda, \| \centerdot \|_{X_\lambda} )$ 
is a Banach space. Its norm can be computed testing on duals elements as follows:
\begin{equation}
\|f\|_{X_\lambda} = \sup_{u \in X_\lambda^\ast \setminus \{ 0 \}} \frac{\langle f, u\rangle}{\| u \|_{X_\lambda^\ast}}.
\label{id:characterizationXlambda}
\end{equation}
\end{proposition}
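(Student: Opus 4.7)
The plan is to establish completeness of $X_\lambda$ first, and then derive the duality formula from Hahn--Banach. I break the argument into three steps.

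\emph{Step 1: $Y_\lambda$ and $Z_\lambda$ are Banach spaces.} Both norms are weighted $\ell^2$-type combinations over the Littlewood--Paley decomposition $f = P_{<I} f + \sum_{k \geq k_\lambda - 2} P_k f$ (convergence in $\mathcal{S}'(\R^d)$). For $Y_\lambda$, the three ingredients are the weighted $L^2$ norm $\|m_\lambda^{-1/2} \widehat{P_{<I} f}\|_{L^2}$ on low frequencies (well defined since $m_\lambda^{-1/2}$ is bounded on $\{ |\xi| \leq 2^{k_\lambda - 3} \}$), the four $\vvvert \centerdot \vvvert$ norms on $P_k f$ for $k \in I$ (a Banach norm since it is an $\ell^1$ sum of $L^2$ norms over the dyadic shells $D_j$), and the weighted $L^2$ norm on the high-frequency tail $k > k_\lambda + 1$, where $m_\lambda^{-1/2}(\xi) \sim |\xi|^{-1}$. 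Each piece space is Banach, and the $\ell^2$-sum construction is Banach provided the map $f \mapsto (P_{<I} f, (P_k f)_{k \in I}, (P_k f)_{k > k_\lambda+1})$ is an injection into the product. This is the case because the pieces determine $f$ as a tempered distribution. The proof of completeness is then the usual one: a Cauchy sequence $(f_n)$ in $Y_\lambda$ is Cauchy in each piece, the pieces converge, and their sum yields a distributional limit $f \in Y_\lambda$ with $f_n \to f$. The argument for $Z_\lambda$ is identical, replacing the $\vvvert \centerdot \vvvert$ middle norms by weighted $L^{p^\prime}$ norms.

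\emph{Step 2: $X_\lambda$ is a Banach space.} This is the standard ``sum of two Banach spaces'' construction applied to $Y_\lambda, Z_\lambda \hookrightarrow \mathcal{S}'(\R^d)$. The infimum norm $\|\centerdot\|_{X_\lambda}$ is a seminorm trivially; nondegeneracy follows from the continuous embeddings of $Y_\lambda$ and $Z_\lambda$ into $\mathcal{S}'(\R^d)$. For completeness, given a Cauchy sequence $(h_n)$ in $X_\lambda$, I pass to a subsequence with $\| h_{n_{k+1}} - h_{n_k} \|_{X_\lambda} < 2^{-k}$, choose decompositions $h_{n_{k+1}} - h_{n_k} = \tilde f_k + \tilde g_k$ with $\|\tilde f_k\|_{Y_\lambda} + \|\tilde g_k\|_{Z_\lambda} < 2^{-k+1}$, and set $f = f_1 + \sum_k \tilde f_k \in Y_\lambda$ and $g = g_1 + \sum_k \tilde g_k \in Z_\lambda$, where $h_{n_1} = f_1 + g_1$ is any initial decomposition. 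The series converge absolutely by Step 1, and $h_{n_k} \to f + g$ in $X_\lambda$; the Cauchy property then upgrades convergence to the full sequence.

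\emph{Step 3: The duality formula.} The inequality $\|f\|_{X_\lambda} \geq \sup_{u \neq 0} \langle f, u \rangle / \|u\|_{X_\lambda^\ast}$ is the definition of the dual norm on $X_\lambda^\ast$. The reverse inequality is exactly the content of the Hahn--Banach theorem applied to the Banach space $X_\lambda$ produced in Step 2: for each nonzero $f \in X_\lambda$ there exists $u_0 \in X_\lambda^\ast$ with $\|u_0\|_{X_\lambda^\ast} = 1$ and $\langle f, u_0 \rangle = \|f\|_{X_\lambda}$, so the supremum is attained.

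\emph{Main obstacle.} The routine part of the argument is Step 3; the bookkeeping in Steps 1--2 is where care is required. The subtlety is verifying that the piecewise norm on $Y_\lambda$ (resp.\ $Z_\lambda$) is genuinely equivalent to a norm on a Banach direct sum, i.e.\ that the reconstruction map from the pieces back to a distribution is continuous and has closed range. This amounts to checking that if $(f_n)$ has Cauchy pieces then the distributional limit $f$ of $\sum_k P_k f_n$ is actually in $Y_\lambda$ with matching piece decomposition $P_k f = \lim_n P_k f_n$; this holds because each $P_k$ is a continuous operator on $\mathcal{S}'(\R^d)$ and commutes with the distributional limit. Once this is granted, the remainder of the proof is a template application of the standard Banach space constructions, and the details are deferred to the appendix as announced.
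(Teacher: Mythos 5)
Your proof is correct and follows essentially the same route as the paper: establish that $Y_\lambda$ and $Z_\lambda$ are Banach spaces via their Besov-type piecewise structure, invoke the standard fact that the sum of two Banach spaces continuously embedded in $\mathcal{S}'(\R^d)$ is Banach, and obtain \eqref{id:characterizationXlambda} from the Hahn--Banach theorem. The only difference is that you write out the completeness arguments that the paper delegates to the appendix and to standard references.
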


\begin{proposition}\label{prop:Xlambdaast}\sl The space $X_\lambda^\ast$ is isomorphic to the space 
of $u\in\mathcal{S}^\prime(\R^d)$ so that
\begin{equation*}
\|m_\lambda^{1/2}\widehat{P_{<I} u}\|_{L^2}^2 + \sum_{k\in I} \big[ \lambda^\frac{1}{2} \vvvert P_k u \vvvert_\ast^2 + \lambda^{d(\frac{1}{q_d}-\frac{1}{p_d})}\|P_k u\|_{L^{q_d}}^2 \big] +\sum_{k > k_\lambda + 1}\|m_\lambda^{1/2}\widehat{P_k u}\|_{L^2}^2 <\infty,
\end{equation*}
endowed with the norm
\begin{equation}
\begin{aligned}
\bigg( \sum_{k\in I} \big[ \lambda^{1/2} \vvvert P_k u \vvvert_\ast^2 + & \, \lambda^{d(\frac{1}{q_d}-\frac{1}{p_d})} \|P_k u\|_{L^{q_d}}^2 \big] \\
& + \|m_\lambda^{1/2}\widehat{P_{<I} u}\|_{L^2}^2  + \sum_{k > k_\lambda + 1}\|m_\lambda^{1/2}\widehat{P_k u}\|_{L^2}^2 \bigg)^{1/2}.
\end{aligned}
\label{term:Xlambdaast}
\end{equation}Finally, $\mathcal{S}(\R^d)$ is dense in $X_\lambda^\ast$.
\end{proposition}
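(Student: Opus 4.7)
The plan is to deduce the proposition from two steps: identify $X_\lambda^\ast = (Y_\lambda + Z_\lambda)^\ast$ with $Y_\lambda^\ast \cap Z_\lambda^\ast$ endowed with the maximum of the two dual norms, and then compute each of $Y_\lambda^\ast$ and $Z_\lambda^\ast$ using the Littlewood--Paley block structure of the defining norms.

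For the first step, both $Y_\lambda$ and $Z_\lambda$ embed continuously into $X_\lambda$ with norm at most $1$, so any $u \in X_\lambda^\ast$ restricts to an element of each of the two duals with norm bounded by $\|u\|_{X_\lambda^\ast}$. Conversely, given $h \in X_\lambda$ and $\epsilon > 0$, choose a decomposition $h = f + g$ with $\|f\|_{Y_\lambda} + \|g\|_{Z_\lambda} \leq \|h\|_{X_\lambda} + \epsilon$; bounding $|\langle h, u\rangle|$ term by term and letting $\epsilon \downarrow 0$ yields $\|u\|_{X_\lambda^\ast} \leq \max(\|u\|_{Y_\lambda^\ast}, \|u\|_{Z_\lambda^\ast})$. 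Thus the problem reduces to identifying the two duals separately.

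For the second step, the norms of $Y_\lambda$ and $Z_\lambda$ are both $\ell^2$ combinations over Littlewood--Paley blocks: a single low-frequency piece on $P_{<I}f$, middle pieces on $P_kf$ for $k\in I$, and high-frequency pieces on $P_kf$ for $k>k_\lambda+1$. The duals of the individual block norms are standard: Plancherel gives the dual of $\|m_\lambda^{-1/2}\widehat{\centerdot}\|_{L^2}$ as $\|m_\lambda^{1/2}\widehat{\centerdot}\|_{L^2}$; Cauchy--Schwarz on each dyadic annulus gives the dual of $\vvvert\centerdot\vvvert$ as $\vvvert\centerdot\vvvert_\ast$; and Riesz representation gives the dual of $\|\centerdot\|_{L^{q_d^\prime}}$ as $\|\centerdot\|_{L^{q_d}}$. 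To promote the blockwise duality to a global one, I introduce enlarged projectors $\widetilde{P}_k$ satisfying $P_k = P_k\widetilde{P}_k$ with bounded overlap in $k$; Plancherel applied to each pairing $\langle P_kf, u\rangle = \langle P_kf, \widetilde{P}_ku\rangle$ together with Cauchy--Schwarz in $k$ yields the upper bound
\[\|u\|_{Y_\lambda^\ast}^2 \lesssim \|m_\lambda^{1/2}\widehat{P_{<I}u}\|_{L^2}^2 + \sum_{k\in I}\lambda^{1/2}\vvvert P_ku\vvvert_\ast^2 + \sum_{k>k_\lambda+1}\|m_\lambda^{1/2}\widehat{P_ku}\|_{L^2}^2,\]
and analogously for $Z_\lambda^\ast$ with the middle-block term replaced by $\lambda^{d(1/q_d-1/p_d)}\|P_ku\|_{L^{q_d}}^2$ (the exponent falling out of the scaling duality $\lambda^{d(1/q_d^\prime-1/p_d^\prime)/2}\|\centerdot\|_{L^{q_d^\prime}} \leftrightarrow \lambda^{d(1/q_d-1/p_d)/2}\|\centerdot\|_{L^{q_d}}$). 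The matching lower bound is obtained by testing $u$ against an $f$ whose frequency-localised pieces are near-extremisers of the dual pairing on each block; the bounded overlap ensures that these contributions assemble in $\ell^2$ to return the expected $Y_\lambda$ (respectively $Z_\lambda$) norm.

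Since the low-frequency and high-frequency contributions to $\|u\|_{Y_\lambda^\ast}^2$ and $\|u\|_{Z_\lambda^\ast}^2$ coincide, their maximum equals the common part plus the maximum of the two middle-frequency terms, and the latter is equivalent up to a factor of $\sqrt{2}$ to the sum appearing in \eqref{term:Xlambdaast}; this delivers the claimed isomorphism. The density of $\mathcal{S}(\R^d)$ in $X_\lambda^\ast$ is obtained by truncating the block decomposition to finitely many $k$ (each $P_ku$ being band-limited, hence smooth), mollifying, and multiplying by a spatial cutoff; the only delicate point is the control in the $\vvvert\centerdot\vvvert_\ast$ norm on the middle blocks, which relies on the simultaneous $L^{q_d}$ bound together with the band-limited structure. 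The main obstacle will be the lower bound in the duality computation, where one must produce simultaneous near-extremisers while respecting the near-orthogonality of the Littlewood--Paley blocks; the formal sum-of-spaces duality in the first step is comparatively routine.
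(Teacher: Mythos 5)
Your treatment of the isomorphism follows essentially the same route as the paper, only with the details written out: the paper delegates the identity $(Y_\lambda+Z_\lambda)^\ast\cong Y_\lambda^\ast\cap Z_\lambda^\ast$ with the maximum norm to a citation of Bergh--L\"ofstr\"om, and the identification of the duals of $Y_\lambda$ and $Z_\lambda$ to the remark that their norms are Besov-type sums of blockwise-dual pieces; your fattened projectors $\widetilde P_k$ and the near-extremiser argument for the lower bound are the standard way of making that remark precise. The one ingredient you should state explicitly in the lower bound is the boundedness of the (transposed) projectors $P_k$ on the block spaces ($B$, $L^{q_d^\prime}$ and the weighted $L^2$ pieces), since the near-extremiser for one block, once fed through $P_k$, must not pollute the other blocks beyond the bounded overlap; the paper uses this same fact elsewhere without comment. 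Up to that, the duality half of the proposal is correct and coincides with the paper's proof.

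The density claim is where there is a genuine gap --- and the same gap is present in the paper's own one-line justification, which simply asserts that $\mathcal{S}(\R^d)$ is dense in $Y_\lambda^\ast$. The closure of $\mathcal{S}(\R^d)$ in the norm $\vvvert \centerdot \vvvert_\ast$ is contained in the proper closed subspace $\{v:\lim_{j\to\infty}2^{-j/2}\|v\|_{L^2(D_j)}=0\}$, because $2^{-j/2}\|\varphi\|_{L^2(D_j)}\to 0$ for every Schwartz $\varphi$. But $X_\lambda^\ast$ contains elements whose middle blocks violate this decay: take $u=\mathcal{F}^{-1}(\dd S_\lambda)$, which in $d=3$, $\lambda=1$ is $\sin|x|/|x|$. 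Its Fourier transform is carried by $S_\lambda$, so $P_{<I}u=0$, $P_ku=0$ for $k>k_\lambda+1$, and each $P_ku$ with $k\in I$ is a constant multiple of $u$. Stationary phase gives $\|u\|_{L^2(D_j)}\simeq 2^{j/2}$, so $\vvvert P_ku\vvvert_\ast<\infty$ but $2^{-j/2}\|P_ku\|_{L^2(D_j)}$ does not tend to zero, while at the same time $u\in L^{q_d}$ with $\|u\|_{L^{q_d}(D_j)}\simeq 2^{-j/q_d}$ --- exactly the borderline at which H\"older's inequality returns no decay of $2^{-j/2}\|u\|_{L^2(D_j)}$. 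Hence the mechanism you propose (the simultaneous $L^{q_d}$ bound together with band-limitedness) cannot force the required decay, and indeed $\inf_{\varphi\in\mathcal{S}}\vvvert P_k(u-\varphi)\vvvert_\ast>0$. This is the familiar fact that $C^\infty_0$ is not norm-dense in the Agmon--H\"ormander space $B_\ast$, and it is not an artefact of the example: outgoing waves are precisely of this non-approximable type, so the defect cannot be defined away.

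What can be salvaged, and what suffices for the places where the paper invokes density of $\mathcal{S}(\R^d)$ in $X_\lambda^\ast$ (e.g.\ the corollary \ref{cor:multiplication}), is either weak-$\ast$ density of $\mathcal{S}(\R^d)$ in $X_\lambda^\ast$ together with a Fatou-type passage to the limit in the bilinear estimates, or norm density in the closed subspace of $X_\lambda^\ast$ whose middle blocks have vanishing dyadic means. If you want your proof to be correct you should replace the final assertion by one of these statements and adjust the truncation argument accordingly; as written, the last step of your proposal (and of the proposition) fails.
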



These spaces have been constructed to make the following 
theorems hold.
\begin{theorem}\label{th:resolvent} \sl There exists a constant $C > 0$ only depending on $d$ so that
\[\| (\Delta + \lambda \pm i 0)^{-1} f \|_{X_\lambda^\ast} \leq C \| f \|_{X_\lambda}\]
for all $f\in X_\lambda$.
\end{theorem}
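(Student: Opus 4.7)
The plan is to exploit the fact that $R_\pm := (\Delta + \lambda \pm i0)^{-1}$ is a Fourier multiplier and thus commutes with every Littlewood--Paley projector $P_k$ and with $P_{<I}$. By Proposition \ref{prop:density} it is enough to argue for $f\in\mathcal{S}(\R^d)$, and since $X_\lambda = Y_\lambda + Z_\lambda$ it suffices to bound $\|R_\pm f\|_{X_\lambda^\ast}$ separately by $\|f\|_{Y_\lambda}$ and by $\|f\|_{Z_\lambda}$. The definitions of $Y_\lambda$, $Z_\lambda$ and (via Proposition \ref{prop:Xlambdaast}) of $X_\lambda^\ast$ split cleanly into the three frequency regimes $P_{<I}$, $\{P_k : k\in I\}$ and $\{P_k : k > k_\lambda+1\}$; because $R_\pm$ does not mix them, the estimate reduces to a block-by-block check at each regime.

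In the two off-critical regimes the symbol $1/(\lambda-|\xi|^2 \pm i0)$ is real and has modulus exactly $1/m_\lambda$, so one obtains the exact identities
\begin{equation*}
\|m_\lambda^{1/2}\widehat{P_{<I} R_\pm f}\|_{L^2} = \|m_\lambda^{-1/2}\widehat{P_{<I} f}\|_{L^2}
\end{equation*}
and the analogous one on each $P_k$ with $k>k_\lambda+1$. These are immediately absorbed into $\|f\|_{Y_\lambda}$ or $\|f\|_{Z_\lambda}$, both of which use the same weighted $L^2$-norm off the critical annulus.

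For the critical block $k\in I$ (where $|\xi|\sim\lambda^{1/2}$), four estimates must be verified: each of the two input norms $\vvvert P_k f\vvvert$ (from $Y_\lambda$) and $\|P_k f\|_{L^{q_d^\prime}}$ (from $Z_\lambda$) must control each of the two output norms $\lambda^{1/4}\vvvert P_k R_\pm f\vvvert_\ast$ and $\lambda^{d(1/q_d-1/p_d)/2}\|P_k R_\pm f\|_{L^{q_d}}$ demanded by $X_\lambda^\ast$. The two diagonal estimates are classical: Agmon--H\"ormander \eqref{in:AH-KPV} gives $\vvvert P_k R_\pm f\vvvert_\ast \lesssim \lambda^{-1/2}\vvvert P_k f\vvvert$, and Kenig--Ruiz--Sogge \eqref{in:KRS} at the endpoint $1/p^\prime-1/p=2/(d+1)$ gives $\|P_k R_\pm f\|_{L^{q_d}} \lesssim \lambda^{-1/(d+1)}\|P_k f\|_{L^{q_d^\prime}}$. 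For the two off-diagonal estimates I would use that, on the critical frequency annulus, the imaginary part of $R_\pm$ is (up to constants) the composition of restriction of $\hat f$ to $S_\lambda$ and extension back to $\R^d$, both controlled by the Tomas--Stein theorem ($L^{q_d^\prime}(\R^d)\to L^2(S_\lambda)$ and $L^2(S_\lambda)\to L^{q_d}(\R^d)$ with the proper $\lambda$-scaling), together with the Agmon--type trace inequality $\|\hat f|_{S_\lambda}\|_{L^2}\lesssim \vvvert f\vvvert$ and its dual. The real (principal-value) part on the annulus of thickness $\sim\lambda^{1/2}$ is handled analogously by $TT^\ast$-type arguments built on the same endpoint Stein--Tomas boundedness.

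The technical heart is this off-diagonal piece at the critical block, which I expect to be formulated precisely as the refined resolvent estimates announced for section \ref{sec:resolvent}; once those are available, summing the block estimates over the three frequency regimes yields the theorem.
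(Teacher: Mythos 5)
Your proposal takes essentially the same route as the paper: reduce to $f\in\mathcal{S}(\R^d)$ by density, decouple the three frequency regimes using that the resolvent commutes with $P_{<I}$ and $P_k$, note that off the critical annulus the multiplier is literally $1/(\lambda-|\xi|^2)$ so the weighted $L^2$ pieces transform isometrically, and on the critical block treat the $\delta_{S_\lambda}$ part by Tomas--Stein restriction/extension together with the Agmon--H\"ormander trace inequality, and the principal-value part by a separate kernel analysis; this is precisely the content of the lemmas \ref{lem:Ylambda} and \ref{lem:Zlambda} that the paper's proof invokes. Where you add something genuine is in insisting on the two \emph{off-diagonal} critical-block estimates. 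Since $X_\lambda^\ast$ is identified with $Y_\lambda^\ast\cap Z_\lambda^\ast$ (with the max norm), boundedness of $(\Delta+\lambda\pm i0)^{-1}$ on $X_\lambda=Y_\lambda+Z_\lambda$ requires all four maps $Y_\lambda\to Y_\lambda^\ast$, $Z_\lambda\to Z_\lambda^\ast$, $Y_\lambda\to Z_\lambda^\ast$ and $Z_\lambda\to Y_\lambda^\ast$, and the mixed ones do not follow formally from the diagonal ones because on the critical block $\lambda^{1/2}\vvvert P_k u\vvvert_\ast^2$ and $\lambda^{d(\frac{1}{q_d}-\frac{1}{p_d})}\|P_k u\|_{L^{q_d}}^2$ are incomparable; the paper's short proof cites only the diagonal lemmas, so on this point your version is the more careful one. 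Two remarks: (i) by evenness of the symbol $\langle R_\pm f,g\rangle=\langle f,R_\pm g\rangle$, so the two mixed bounds are dual to each other and only one needs proving, say $\lambda^{1/4}\vvvert P_kR_\pm h\vvvert_\ast\lesssim\lambda^{\frac{d}{2}(\frac{1}{q_d^\prime}-\frac{1}{p_d^\prime})}\|P_kh\|_{L^{q_d^\prime}}$ for $k\in I$ --- your restriction--extension factorization gives exactly this for the surface-measure term, while the principal-value term requires a mixed-norm variant of the kernel bounds \eqref{in:LinftyL1}--\eqref{in:L2L2}, which your ``analogous $TT^\ast$'' remark leaves to be written out; (ii) if you want $d=2$, you cannot quote \eqref{in:KRS} as a black box at the exponent $q_2$, since Kenig--Ruiz--Sogge prove it only for $d\geq 3$; the paper supplies the $d=2$ endpoint itself inside the proof of the lemma \ref{lem:Zlambda}.
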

\begin{proof}
A standard density argument together with the proposition \ref{prop:density} reduces the theorem to prove the inequality for every $f \in \mathcal{S}(\R^d)$. Now, by the proposition \ref{prop:Xlambdaast} and the lemmas \ref{lem:Ylambda} and \ref{lem:Zlambda} ---in the section \ref{sec:refined}--- we obtain that
\[\| (\Delta + \lambda \pm i 0)^{-1} f \|_{X_\lambda^\ast} \lesssim \| f \|_{Y_\lambda} + \| f \|_{Z_\lambda}\]
for all $f \in \mathcal{S}(\R^d)$. Since the left-hand side of the previous inequalities is independent of the representation of $f $ as $ g + h $ with $g \in Y_\lambda$ and $h \in Z_\lambda$ and $f = 1/2 f + 1/2 f $ is one of the possible ones, we just need to take the infimum on the right-hand side to conclude that
\[\| (\Delta + \lambda \pm i 0)^{-1} f \|_{X_\lambda^\ast} \lesssim \| f \|_{X_\lambda}\]
for all $f \in \mathcal{S}(\R^d)$.
\end{proof}

\begin{theorem}\label{th:embedding} \sl Consider $p\in[q_d, p_d]$.  There exists a constant $C > 0$ only depending on $d$ and $p$ so that
\[\lambda^{1/4} \vvvert u \vvvert_\ast + \lambda^{\frac{d}{2}(\frac{1}{p}-\frac{1}{p_d})}\| u \|_{L^p} \leq C \| u \|_{X_\lambda^\ast}\]
for every $u \in \mathcal{S}(\R^d)$.
\end{theorem}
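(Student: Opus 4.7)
The plan is to use Proposition \ref{prop:Xlambdaast} to work with the equivalent Littlewood--Paley characterisation \eqref{term:Xlambdaast} of $\|u\|_{X_\lambda^\ast}$, decompose $u = P_{<I}u + \sum_{k\in I}P_k u + \sum_{k>k_\lambda+1}P_k u$, and estimate $\vvvert\centerdot\vvvert_\ast$ and $\|\centerdot\|_{L^p}$ block by block against the corresponding pieces of \eqref{term:Xlambdaast}. The three workhorse tools will be Bernstein's inequality for Fourier-localised functions, the trivial bound $\vvvert f\vvvert_\ast\leq\|f\|_{L^2}$, and the lower bound on $m_\lambda(\xi)$ away from the critical sphere $S_\lambda$.

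On the critical block $I$ has only four indices, so Cauchy--Schwarz gives $\sum_{k\in I}\vvvert P_k u\vvvert_\ast\lesssim\lambda^{-1/4}\|u\|_{X_\lambda^\ast}$ from the $\lambda^{1/2}\vvvert P_k u\vvvert_\ast^2$ contribution in \eqref{term:Xlambdaast}. For the $L^p$ half, since $\widehat{P_k u}$ is supported where $|\xi|\sim\lambda^{1/2}$, Bernstein's inequality yields $\|P_k u\|_{L^p}\lesssim\lambda^{d(1/q_d-1/p)/2}\|P_k u\|_{L^{q_d}}$, and multiplying by the requested weight $\lambda^{d(1/p-1/p_d)/2}$ collapses the $\lambda$-powers to $\lambda^{d(1/q_d-1/p_d)/2}$, which is exactly the factor on $\|P_k u\|_{L^{q_d}}^2$ in \eqref{term:Xlambdaast}. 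For the low-frequency block, $\widehat{P_{<I}u}$ is supported in $\{|\xi|\leq\lambda^{1/2}/2\}$ where $m_\lambda\geq 3\lambda/4$, so $\|P_{<I}u\|_{L^2}\lesssim\lambda^{-1/2}\|m_\lambda^{1/2}\widehat{P_{<I}u}\|_{L^2}$; combining this with $\vvvert\centerdot\vvvert_\ast\leq\|\centerdot\|_{L^2}$ handles the first term, while Bernstein $\|P_{<I}u\|_{L^p}\lesssim\lambda^{d(1/2-1/p)/2}\|P_{<I}u\|_{L^2}$ together with the short exponent check $d(1/p-1/p_d)/2 + d(1/2-1/p)/2 - 1/2 = 0$ handles the $L^p$ term.

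The high-frequency tail $k>k_\lambda+1$ is the delicate step. There $|\xi|\sim 2^k\geq 2\lambda^{1/2}$ forces $m_\lambda\gtrsim 2^{2k}$, so $\|P_k u\|_{L^2}\lesssim 2^{-k}\|m_\lambda^{1/2}\widehat{P_k u}\|_{L^2}$. A Cauchy--Schwarz against the geometric tail $\sum_{k>k_\lambda+1}2^{-2k}\sim\lambda^{-1}$ produces the $\vvvert\centerdot\vvvert_\ast$ estimate with an extra gain of $\lambda^{-1/2}$ that absorbs the $\lambda^{1/4}$ weight. For the $L^p$ part, Bernstein yields $\|P_k u\|_{L^p}\lesssim 2^{k(d(1/2-1/p)-1)}\|m_\lambda^{1/2}\widehat{P_k u}\|_{L^2}$, whose exponent is non-positive on $[q_d,p_d]$ but vanishes exactly at the endpoint $p=p_d$. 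Here a naive $\ell^1$ summation over $k$ fails, and one must instead invoke the Littlewood--Paley square-function theorem together with Minkowski's inequality $\|\sum_k P_k u\|_{L^p}\lesssim\big(\sum_k\|P_k u\|_{L^p}^2\big)^{1/2}$ (valid because $p\geq q_d\geq 2$) to close the sum in $\ell^2$; the surviving $\lambda$-exponent then again collapses to zero by the same arithmetic identity used in the low-frequency block. Handling this $p=p_d$ endpoint cleanly, without losing the $\lambda$-gain in the high-frequency tail, is where I expect the main technical obstacle.
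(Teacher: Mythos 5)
Your proposal is correct and follows essentially the same route as the paper: the paper reduces the theorem to Lemmas \ref{lem:Lpembedding} and \ref{lem:L2embedding} via Proposition \ref{prop:Xlambdaast}, and those lemmas are proved by exactly the block-by-block Littlewood--Paley decomposition, Bernstein estimates, and $m_\lambda$ comparisons you describe. The only (inessential) difference is that the paper sums the high-frequency $L^p$ pieces in $\ell^2$ via the square-function bound $\|f\|_{L^p}^2\lesssim \|P_{<I}f\|_{L^p}^2+\sum_k\|P_kf\|_{L^p}^2$ uniformly for all $p\in[q_d,p_d]$, rather than reserving that device for the endpoint $p=p_d$, so the endpoint causes no extra difficulty.
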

\begin{proof}
This theorem is a consequence of the lemmas \ref{lem:Lpembedding} and 
\ref{lem:L2embedding} ---in the section \ref{sec:classical}--- and the proposition \ref{prop:Xlambdaast}.
\end{proof}
Next, we use the previous embedding to estimate the norm of the operator multiplication by $V^0$.
\begin{corollary}\label{cor:multiplication} \sl There exists a constant $C> 0$ that only depends on $d$ and $R_0$ so that
\[\| V^0\|_{\mathcal{L}(X_\lambda^\ast; X_\lambda)} \leq C \big(\lambda^{-1/4} + \|\mathbf{1}_F V^0\|_{L^{d/2}}\big),\]
where $F = \{x\in\R^d : |V^0(x)|> \lambda^{1/4}\}$.
\end{corollary}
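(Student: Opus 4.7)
The plan is to use the duality characterization \eqref{id:characterizationXlambda} provided by Proposition \ref{prop:Xlambda} to reduce the bound on $\|V^0 u\|_{X_\lambda}$ to estimating the pairing $\langle V^0 u, v\rangle = \int_{\R^d} V^0 u \bar v$ for a generic $v \in X_\lambda^\ast$. By Proposition \ref{prop:density}, density arguments let me assume $u, v \in \mathcal{S}(\R^d)$. The central idea is then to split $V^0$ at height $\lambda^{1/4}$ into a bounded part and a tail measured in $L^{d/2}$, and estimate each piece with the matching part of the embedding in Theorem \ref{th:embedding}.

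For the bounded part, I write $V^0 = V^0 \mathbf{1}_{F^c} + V^0 \mathbf{1}_F$, so that $|V^0 \mathbf{1}_{F^c}| \leq \lambda^{1/4}$ pointwise. Using $\supp V^0 \subset B_0$, this gives
\[
\Big|\int V^0 \mathbf{1}_{F^c} u \bar v\Big| \leq \lambda^{1/4} \|u\|_{L^2(B_0)} \|v\|_{L^2(B_0)}.
\]
Since $R_0 \geq 1$, the ball $B_0$ is covered by an $R_0$-dependent finite union of dyadic annuli $D_j$, so $\|u\|_{L^2(B_0)} \lesssim_{R_0} \vvvert u\vvvert_\ast$, and likewise for $v$. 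Theorem \ref{th:embedding} then produces $\lambda^{1/4}\vvvert u\vvvert_\ast \leq C\|u\|_{X_\lambda^\ast}$. Combining these gives a contribution of size $\lambda^{-1/4}\|u\|_{X_\lambda^\ast}\|v\|_{X_\lambda^\ast}$ to the pairing, which is the first term of the claimed bound.

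For the singular part, I apply H\"older's inequality with exponents $(d/2, p_d, p_d)$, which are admissible because $2/d + 2/p_d = 1$ by the definition $1/p_d = 1/2 - 1/d$:
\[
\Big|\int V^0 \mathbf{1}_F u \bar v\Big| \leq \|\mathbf{1}_F V^0\|_{L^{d/2}} \|u\|_{L^{p_d}} \|v\|_{L^{p_d}}.
\]
Invoking Theorem \ref{th:embedding} at the endpoint $p = p_d$ (where the $\lambda$-weight is $\lambda^0 = 1$) yields $\|u\|_{L^{p_d}} \lesssim \|u\|_{X_\lambda^\ast}$ and $\|v\|_{L^{p_d}} \lesssim \|v\|_{X_\lambda^\ast}$, producing the second term of the estimate. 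Adding the two contributions and taking the supremum over $v$ with $\|v\|_{X_\lambda^\ast} \leq 1$ finishes the proof through \eqref{id:characterizationXlambda}.

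The only non-routine detail is the conversion from $\|u\|_{L^2(B_0)}$ to the weighted norm $\vvvert u\vvvert_\ast$; this is what forces the dependence on $R_0$ and relies only on the fact that $B_0$ meets a controlled number of annuli $D_j$. Everything else is an application of the endpoint $L^{d/2}$--H\"older splitting in tandem with the $L^{p_d}$ and $\vvvert\cdot\vvvert_\ast$ parts of the embedding estimate, so I do not expect a substantial obstacle.
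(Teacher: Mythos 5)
Your proof is correct and follows essentially the same route as the paper: duality via \eqref{id:characterizationXlambda}, the height-$\lambda^{1/4}$ splitting of $V^0$, H\"older with exponents $(d/2,p_d,p_d)$ on the tail, and Theorem \ref{th:embedding} for both pieces (the paper uses a smooth cut-off $\chi$ supported in $2B_0$ where you cover $B_0$ by finitely many annuli $D_j$, but these are interchangeable). The only nit is that the density of $\mathcal{S}(\R^d)$ in $X_\lambda^\ast$, which you need for the test functions, is the content of Proposition \ref{prop:Xlambdaast} rather than Proposition \ref{prop:density}.
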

\begin{proof}
We use \eqref{id:characterizationXlambda} in the proposition \ref{prop:Xlambda} to estimate $\| V^0 \|_{\mathcal{L} (X_\lambda^\ast; X_\lambda)}$. Start by writing
\begin{equation}\label{eq:potential}
\langle V^0 f, g\rangle=\int_{\R^d} V^0 fg
\end{equation}
with $f$ and $g$ in $\mathcal{S}(\R^d)$.
Since the support of $V$ is contained in $B_0$, the support of $V^0$ 
is also contained in $B_0$. Then, $f$ and $g$ in \eqref{eq:potential} 
can be replaced by $\chi f$ and $\chi g$ with $\chi$ a smooth cut-off 
function supported in $2B_0$ and so that $\chi(x) = 1$ for all
$x \in B_0$. Thus,
$$
\int_{\R^d} V^0 f g \, = \int_{\R^d}\mathbf{1}_E V^0 \chi f \chi g\, + \int_{\R^d}\mathbf{1}_F V^0 f g\,,
$$
where $E=\{x\in\R^d : |V^0(x)|\leq M\}$, $F=\R^d\setminus E$, and
$\mathbf{1}_E$ and $\mathbf{1}_F$ stand for the characteristic 
functions of $E$ and $F$. Using H\"older's inequality, we obtain
\begin{equation}
\begin{aligned}
\Big |\int_{\R^d} V^0 \chi f \chi g \, \Big|& \leq M\|\chi f\|_{L^2}\|\chi g\|_{L^2}+\|\mathbf{1}_F V^0\|_{L^{d/2}}\|f\|_{L^{p_d}}\|g\|_{L^{p_d}}\\
&\lesssim M\vvvert f \vvvert_\ast \vvvert g \vvvert_\ast+\|\mathbf{1}_F V^0\|_{L^{d/2}}\|f\|_{L^{p_d}}\|g\|_{L^{p_d}} \\
&\lesssim M\lambda^{-1/2}\|f\|_{X_\lambda^\ast}\|g\|_{X_\lambda^\ast}+\|\mathbf{1}_F V^0\|_{L^{d/2}}\|f\|_{X_\lambda^\ast}\|g\|_{X_\lambda^\ast}.
\end{aligned}
\label{in:V0}
\end{equation}
In the last inequality we have used the theorem \ref{th:embedding}.
From the inequalities \eqref{in:V0} together 
with the density of $\mathcal{S}(\R^d)$ in $X_\lambda^\ast$ provided 
by the proposition \ref{prop:Xlambdaast}, we conclude the statement of the corollary by choosing $M = \lambda^{1/4}$.
\end{proof}

As a direct consequence of the theorem \ref{th:resolvent} and the 
corollary \ref{cor:multiplication} we can estimate $\|(\Delta + \lambda \pm i 0)^{-1} \circ V^0 \|_{\mathcal{L}(X_\lambda^\ast)}$.

\begin{corollary}\label{cor:iteration} \sl There exists a positive $\lambda_0 = \lambda_0 (d, V^0, R_0)$ so that
\[\|(\Delta + \lambda \pm i 0)^{-1} \circ V^0 \|_{\mathcal{L}(X_\lambda^\ast)} < 1\]
for all $\lambda \geq \lambda_0$.
\end{corollary}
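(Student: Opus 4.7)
The plan is to compose the two estimates already established: the resolvent bound in Theorem \ref{th:resolvent} and the multiplier bound in Corollary \ref{cor:multiplication}. By the operator-norm submultiplicativity, for every $\lambda > 0$ we obtain
\[
\|(\Delta+\lambda\pm i0)^{-1}\circ V^0\|_{\mathcal{L}(X_\lambda^\ast)} \leq C_1\,\|V^0\|_{\mathcal{L}(X_\lambda^\ast;\,X_\lambda)} \leq C\bigl(\lambda^{-1/4} + \|\mathbf{1}_{F_\lambda} V^0\|_{L^{d/2}}\bigr),
\]
where $C = C(d,R_0)$ and $F_\lambda = \{x\in\R^d : |V^0(x)| > \lambda^{1/4}\}$. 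Thus the corollary reduces to showing that the right-hand side can be made strictly less than $1$ by choosing $\lambda$ large enough in a way that depends only on $d$, $R_0$, and $V^0$.

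The first summand $\lambda^{-1/4}$ tends to $0$ as $\lambda \to \infty$ trivially. For the second summand, I would use that $V^0 \in L^{d/2}(\R^d;\R)$ together with dominated convergence: as $\lambda\to\infty$ the threshold $\lambda^{1/4}\to\infty$, so the set $F_\lambda$ decreases (modulo a null set) to $\{x : |V^0(x)|=\infty\}$, which has measure zero; hence $\mathbf{1}_{F_\lambda}(x)\,V^0(x) \to 0$ pointwise almost everywhere, and since $|\mathbf{1}_{F_\lambda} V^0|^{d/2} \leq |V^0|^{d/2}\in L^1(\R^d)$, dominated convergence yields $\|\mathbf{1}_{F_\lambda}V^0\|_{L^{d/2}}\to 0$.

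Combining these two facts, given any $\eta \in (0,1)$ (for instance $\eta = 1/(2C)$) we can select $\lambda_0 = \lambda_0(d, V^0, R_0)$ so large that both $\lambda^{-1/4} < \eta/(2C)$ and $\|\mathbf{1}_{F_\lambda} V^0\|_{L^{d/2}} < \eta/(2C)$ for all $\lambda \geq \lambda_0$; then
\[
\|(\Delta+\lambda\pm i0)^{-1}\circ V^0\|_{\mathcal{L}(X_\lambda^\ast)} \leq C\bigl(\lambda^{-1/4} + \|\mathbf{1}_{F_\lambda}V^0\|_{L^{d/2}}\bigr) < \eta < 1,
\]
which is the claim. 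There is no serious obstacle: the only subtlety is the dependence of $\lambda_0$ on $V^0$, which enters solely through the rate at which $\|\mathbf{1}_{F_\lambda}V^0\|_{L^{d/2}}$ tends to zero (a quantitative version of absolute continuity of the integral for the fixed $L^{d/2}$ function $V^0$); this is exactly why the resulting $\lambda_0$ depends on $V^0$ but no smallness of $V^0$ is required.
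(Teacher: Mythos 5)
Your proof is correct and follows exactly the paper's argument: compose Theorem \ref{th:resolvent} with Corollary \ref{cor:multiplication} and observe that $\|\mathbf{1}_F V^0\|_{L^{d/2}}\to 0$ as $\lambda\to\infty$ (the paper states this convergence without detail; your dominated-convergence justification is the intended one).
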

\begin{proof} Applying the theorem \ref{th:resolvent} and the 
corollary \ref{cor:multiplication} and noting that $\|\mathbf{1}_F V^0\|_{L^{d/2}}$ tends to $0$ as $\lambda$ grows, we check that the statement holds.
\end{proof}
This corollary is the basic ingredient to perform the Neumann series 
argument sketched in the introduction. In fact, by the corollary \ref{cor:iteration} we have that the series
\begin{equation}
\sum_{n \in \N} [(\Delta + \lambda \pm i 0)^{-1} \circ V^0]^{n-1} (u)
\label{term:series}
\end{equation}
converges in $X_\lambda^\ast$, for every $u \in X_\lambda^\ast$. Thus, we can construct the resolvent
$$(\Delta + \lambda \pm i 0 - V^0)^{-1}$$
and prove its boundedness from $X_\lambda$ to $X_\lambda^\ast$.
\begin{proposition}\label{prop:resolventV0} \sl The operator defined by
\[(\Delta + \lambda \pm i 0 - V^0)^{-1} f = \sum_{n \in \N} [(\Delta + \lambda \pm i 0)^{-1} \circ V^0]^{n-1} ((\Delta + \lambda \pm i 0)^{-1} f), \]
for every $f \in X_\lambda$,
is bounded from $X_\lambda$ to $X_\lambda^\ast$. Moreover, $u^\pm = (\Delta + \lambda \pm i 0 - V^0)^{-1} f $ solves the equation
\begin{equation}
(\Delta + \lambda - V^0) u^\pm = f  \, \, \textnormal{in}\, \R^d,
\label{eq:equation}
\end{equation}
and, if $f$ is compactly supported in $B_0$, then $u^\pm $ satisfies 
the Sommerfeld radiation condition
\[\sup_{|x|=R} \Big| \frac{x}{|x|} \cdot \nabla u^\pm (x) \mp i \lambda^{1/2} u^\pm (x)  \Big| = o(R^{-\frac{d-1}{2}}) \]
for all $R \geq R_0$.
\end{proposition}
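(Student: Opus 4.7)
The plan is to organise the proof around the three assertions in the statement: (i) boundedness of the operator from $X_\lambda$ to $X_\lambda^\ast$; (ii) validity of the equation $(\Delta + \lambda - V^0) u^\pm = f$ in $\mathcal{S}^\prime(\R^d)$; and (iii) the Sommerfeld radiation condition when $f$ is compactly supported in $B_0$. I will set $T = (\Delta + \lambda \pm i 0)^{-1} \circ V^0$ throughout, so that the series in the definition reads $u^\pm = \sum_{n \geq 0} T^n \big( (\Delta + \lambda \pm i 0)^{-1} f \big)$.

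For (i), the theorem \ref{th:resolvent} and the corollary \ref{cor:multiplication} jointly assert that $T$ is a bounded endomorphism of $X_\lambda^\ast$, while the corollary \ref{cor:iteration} yields $\| T \|_{\mathcal{L}(X_\lambda^\ast)} < 1$ for $\lambda \geq \lambda_0$. Consequently the Neumann series $\sum_{n\geq 0} T^n$ converges in the operator norm to $(I - T)^{-1} \in \mathcal{L}(X_\lambda^\ast)$, and composing with $(\Delta + \lambda \pm i 0)^{-1}\colon X_\lambda \to X_\lambda^\ast$ gives the desired bound for the whole operator. For (ii), unfolding one step of the series delivers the resolvent identity $u^\pm = (\Delta + \lambda \pm i 0)^{-1}(V^0 u^\pm + f)$ valid in $X_\lambda^\ast$. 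Applying $\Delta + \lambda$ to both sides in $\mathcal{S}^\prime(\R^d)$ and using that $(\Delta + \lambda)(\Delta + \lambda \pm i 0)^{-1} g = g$ holds on $X_\lambda$ (an identity coming from the construction of the resolvent, valid on $\mathcal{S}(\R^d)$ by Fourier inversion and extended to $X_\lambda$ by the density proposition \ref{prop:density} together with the continuity of $\Delta + \lambda$ as a map on $\mathcal{S}^\prime$) gives $(\Delta + \lambda) u^\pm - V^0 u^\pm = f$.

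For (iii), I would exploit that both $V^0$ and $f$ are supported in $B_0$: the inclusion $X_\lambda^\ast \subset L^{p_d}_{\mathrm{loc}}$ from the theorem \ref{th:embedding} and H\"older give $V^0 u^\pm \in L^{p_d^\prime}(B_0)$, so $V^0 u^\pm + f$ is a compactly supported element of $X_\lambda$. On compactly supported distributions, the action of $(\Delta + \lambda \pm i 0)^{-1}$ coincides with convolution against the fundamental solution $\Phi_\lambda^\pm$, and the standard far-field expansion
\[\frac{x}{|x|} \cdot \nabla \Phi_\lambda^\pm (x) \mp i \lambda^{1/2} \Phi_\lambda^\pm (x) = \mathcal{O}(|x|^{-\frac{d+1}{2}})\]
together with the compact support of $V^0 u^\pm + f$ yields the required $o(R^{-(d-1)/2})$ decay after differentiation under the convolution sign.

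The main obstacle I anticipate is the rigorous handling of the distributional identity $(\Delta + \lambda)(\Delta + \lambda \pm i 0)^{-1} g = g$ on all of $X_\lambda$: the space $X_\lambda$ genuinely contains objects that are not functions (e.g.\ $\delta$-shell components), so the identity, immediate on $\mathcal{S}(\R^d)$ by Fourier inversion, must be transferred via approximation in the $X_\lambda$-norm provided by the proposition \ref{prop:density} and the continuity of $\Delta + \lambda$ acting on the target $X_\lambda^\ast \hookrightarrow \mathcal{S}^\prime$. Once this identity is in place, the compatibility with the multiplication operator $V^0 \in \mathcal{L}(X_\lambda^\ast; X_\lambda)$ ensures that the computation in step (ii) is meaningful and that step (iii) reduces to the classical asymptotics of $\Phi_\lambda^\pm$.
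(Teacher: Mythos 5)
Your steps (i) and (ii) are correct and follow the paper's own route: the Neumann series converges by the corollary \ref{cor:iteration}, the composition with $(\Delta + \lambda \pm i 0)^{-1}$ gives the boundedness via the theorem \ref{th:resolvent}, and unfolding one term of the series yields $u^\pm = (\Delta + \lambda \pm i 0)^{-1}(V^0 u^\pm + f)$, from which the equation follows; your care with the density argument for $(\Delta+\lambda)(\Delta+\lambda\pm i0)^{-1}g = g$ on $X_\lambda$ is if anything more explicit than the paper's.

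The gap is in step (iii). You treat the source $h = V^0 u^\pm + f$ as if it were an $L^1$ density with compact support, for which the pointwise far-field asymptotics of $\Phi_\lambda^\pm$ and ``differentiation under the convolution sign'' would indeed give the $o(R^{-(d-1)/2})$ decay. But $f$ is only assumed to be a compactly supported element of $X_\lambda$ --- in the application it is $V u_{\rm in}$, which contains the measure $\alpha\, u_{\rm in}\,\dd\sigma$, and $X_\lambda$ also contains distributions of the type $(I-\Delta)^{1/2}B$ that carry a derivative. For such $h$ the only available quantitative bound is the duality estimate
\[
\Big| \tfrac{x}{|x|} \cdot \nabla u^\pm (x) \mp i \lambda^{1/2} u^\pm (x)  \Big| \leq \|h \|_{X_\lambda} \Big\| \chi \Big( \tfrac{x}{|x|} \cdot \nabla_x \mp i \lambda^{1/2}\Big) [\Phi^\pm_\lambda (x - \centerdot)]  \Big\|_{X_\lambda^\ast},
\]
and one must show that the $X_\lambda^\ast$-norm on the right is $o(R^{-(d-1)/2})$. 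That norm is not controlled by the sup norm of the kernel: its high-frequency part is an $H^1$-type quantity, so one needs both the embedding $\| u \|_{X^\ast_\lambda} \lesssim \lambda^{1/4} \vvvert u \vvvert_\ast + \| (\lambda - \Delta)^{1/2} u \|_{L^2}$ and the Sommerfeld radiation condition for the first $y$-derivatives $\partial^\alpha_y \Phi^\pm_\lambda(x-y)$, $|\alpha| = 1$, uniformly for $y$ in compact sets. The latter is precisely the identity the paper singles out as ``known but not so standard''; it does not follow from the far-field expansion of $\Phi_\lambda^\pm$ alone that you invoke. Without these two ingredients your argument only covers sources that happen to be integrable functions, which is strictly weaker than what the proposition (and its use in the Fredholm step, where $\alpha\,\dd\sigma$ enters the source) requires.
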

\begin{proof}
The fact that $(\Delta + \lambda \pm i 0 - V^0)^{-1}$ is well-defined 
in $X_\lambda$ follows from the convergence of the series 
\eqref{term:series}, which is consequence of the corollary 
\ref{cor:iteration}. The boundedness from $X_\lambda$ to
$X_\lambda^\ast$ follows from the theorem \ref{th:resolvent} and the 
fact that the series \eqref{term:series} defines a bounded operator 
in $X_\lambda^\ast$.
To check that $u^\pm$ solves \eqref{eq:equation} we just need to note that
\begin{equation}
\begin{aligned}
u^\pm &= (\Delta + \lambda \pm i 0)^{-1} f + \sum_{n \in \N} [(\Delta + \lambda \pm i 0)^{-1} \circ V^0]^n ((\Delta + \lambda \pm i 0)^{-1} f)\\
&= (\Delta + \lambda \pm i 0)^{-1} f + (\Delta + \lambda \pm i 0)^{-1} (V^0 u^\pm).
\end{aligned}
\label{id:upm}
\end{equation}
Last identity holds by the corollary \ref{cor:iteration}. Thus, testing the differential operator $\Delta + \lambda$ with $u^\pm$ and using the identity \eqref{id:upm}, we obtain that
\[(\Delta + \lambda - V^0)u^\pm = f \, \, \textnormal{in}\, \R^d.\]
To finish the proof of this proposition, we need to check that $u^\pm$ satisfies the corresponding radiation condition. Start by noting that
\[ u^\pm = (\Delta + \lambda \pm i 0)^{-1}  \sum_{n \in \N} [V^0 \circ (\Delta + \lambda \pm i 0)^{-1}]^{n-1} (f). \]
To justify this identity, we use the boundedness of $(\Delta + \lambda \pm i 0)^{-1}$ from $X_\lambda$ to $X_\lambda^\ast$ and that, for every $\lambda \geq \lambda_0$,
\[\| V^0 \circ (\Delta + \lambda \pm i 0)^{-1} \|_{\mathcal{L} (X_\lambda)} < 1.\]
The contraction of $V^0 \circ (\Delta + \lambda \pm i 0)^{-1}$ in $X_\lambda$ is a consequence of
the corollary \ref{cor:multiplication} and the theorem \ref{th:resolvent}. Note that $u^\pm = (\Delta + \lambda \pm i 0)^{-1} g $, with 
\[ g = \sum_{n \in \N} [V^0 \circ (\Delta + \lambda \pm i 0)^{-1}]^{n-1} (f) \in X_\lambda \]
and compactly supported in $B_0$. Since $g \in X_\lambda$, one can check that $u^\pm$ satisfies the equation
$(\Delta + \lambda) u^\pm = g$.
By Theorem 11.1.1 in \cite{zbMATH02123716}, we have that the restriction of $u^\pm$ to $\R^d \setminus \supp g$ is smooth. On the other hand, since $g$ is compactly supported and the function $y \mapsto \Phi^\pm_\lambda (x - y)$ is smooth in any open neighbourhood $N_g$ of $\supp g$, for every $x \in \R^d \setminus \overline{N_g}$ then,
\[\langle u^\pm , \phi \rangle = \langle g, \int_{\R^d}  \Phi^\pm_\lambda (x - \centerdot) \phi(x) \, \dd x \rangle  = \int_{\R^d} \langle g, \Phi^\pm_\lambda (x - \centerdot) \rangle \phi(x) \, \dd x \]
for all $\phi \in \mathcal{S}(\R^d)$ with $\supp \phi \subset \R^d \setminus \supp g$. Then, the representation formula 
\begin{equation*}
u^\pm(x) = \langle g, \Phi^\pm_\lambda (x - \centerdot) \rangle, \quad \forall x \in \R^d \setminus \supp g
\end{equation*}
holds. To check the radiation condition, we proceed as follows
\[\Big| \frac{x}{|x|} \cdot \nabla u^\pm (x) \mp i \lambda^{1/2} u^\pm (x)  \Big| \leq \|g \|_{X_\lambda} \Big\| \chi \Big( \frac{x}{|x|} \cdot \nabla_x \mp i \lambda^{1/2}\Big) [\Phi^\pm_\lambda (x - \centerdot)]  \Big\|_{X_\lambda^\ast}, \]
where $\chi$ is a smooth cut-off such that $\chi (y) = 1$ for all $y \in \supp g$, the subindex $x$ in $\nabla_x$ indicates that the gradient is acting on the function $x \mapsto \Phi^\pm_\lambda (x - y)$. It remains to prove that
\begin{equation}
\sup_{|x|=R} \Big\| \chi \Big( \frac{x}{|x|} \cdot \nabla_x \mp i \lambda^{1/2}\Big) [\Phi^\pm_\lambda (x - \centerdot)]  \Big\|_{X_\lambda^\ast} = o(R^{-\frac{d-1}{2}}).
\label{id:finalCOND}
\end{equation}
To do so, the first point we should notice is that
\begin{equation}
\begin{aligned}
\Big\| \chi \Big( \frac{x}{|x|} \cdot \nabla_x \mp i \lambda^{1/2}\Big)& \, [\Phi^\pm_\lambda (x - \centerdot)] \Big\|_{X_\lambda^\ast} \\
& \lesssim \sum_{|\alpha| \leq 1} \lambda^{\frac{1 - |\alpha|}{2}} \Big\| \chi \Big( \frac{x}{|x|} \cdot \nabla_x \mp i \lambda^{1/2}\Big) [\partial^\alpha \Phi^\pm_\lambda (x - \centerdot)]  \Big\|_{L^2}
\end{aligned}
\label{in:XastL2H1}
\end{equation}
where $\alpha = (\alpha_1, \dots, \alpha_d) \in \N_0^d$ denotes a multi-index and $|\alpha| = \alpha_1 +\dots + \alpha_d$. This inequality follows from the inequality
\begin{equation}
\| u \|_{X^\ast_\lambda} \lesssim \lambda^{1/4} \vvvert u \vvvert_\ast + \| (\lambda - \Delta)^{1/2} u \|_{L^2}
\label{in:controllingXastLambda}
\end{equation}
where $(\lambda - \Delta)^{1/2}$ denotes the multiplier with symbol $(\lambda + |\xi|^2)^{1/2}$. The inequality \eqref{in:controllingXastLambda} is a consequence of a combination of three facts. The first one is the boundedness of $P_k$ with respect to the norm $\vvvert \centerdot \vvvert_\ast $. The second one is the inequality
$$ \lambda^{d/2 (1/q_d - 1/p_d)} \| P_k u \|_{L^{q_d}} \lesssim 2^k \| P_k u \|_{L^2} $$
for $k \in I$ ---which follows from Bernstein's inequality and the equivalence $2^k \simeq 2^{k_\lambda} \simeq \lambda^{1/2}$ when $k \in I$. The third fact is that $$ m_\lambda(\xi)^{1/2} |\widehat{P_{<I} u}(\xi)| \simeq \lambda^{1/2} |\widehat{P_{<I} u}(\xi)|,$$
and
$$ m_\lambda(\xi)^{1/2} |\widehat{P_k u}(\xi)| \simeq 2^k |\widehat{P_k u}(\xi)|$$
if $k>k_\lambda+1$. Combining these three facts, one can derive the inequality \eqref{in:controllingXastLambda}. Finally, the condition \eqref{id:finalCOND} follows from the inequality \eqref{in:XastL2H1} and the identity
\begin{equation}
\sup_{|x|=R} \Big| \frac{x}{|x|} \cdot \nabla_x \big(\partial^\alpha_y \Phi^\pm_\lambda (x - y) \big) \mp i \lambda^{1/2} \big( \partial^\alpha_y \Phi^\pm_\lambda (x - y) \big) \Big| = o_y(R^{-\frac{d-1}{2}})
\label{id:SRC_fundamental-sol}
\end{equation}
which holds uniformly for $y$ in compact subsets. The identity \eqref{id:SRC_fundamental-sol} for $\alpha = 0$ is the standard radiation condition. The case $|\alpha| = 1$ is known but might not be so standard. It is consequence of a tedious computation, that is actually, the exactly same computation used to show that
\[\sup_{|x|=R} \Big| \frac{x}{|x|} \cdot \nabla_x \big( \nu_y \cdot \nabla_y \Phi^\pm_\lambda (x - y) \big) \mp i \lambda^{1/2} \big( \nu_y \cdot \nabla_y \Phi^\pm_\lambda (x - y) \big) \Big| = o_y(R^{-\frac{d-1}{2}}),\]
where $\nu$ denotes the unitary exterior vector normal to the boundary of a smooth bounded domain. The last identity is rather standard and is the basic ingredient to show that, if a solution of the homogeneous equation $(\Delta + \lambda) u = 0$ in a exterior smooth bounded domain $\Omega = \R^d \setminus \overline{D}$ satisfies an integral representation in $\Omega$, in terms of its values and those of $\Phi^\pm_\lambda (x - \centerdot)$ on $\partial \Omega$, then $u$ has to satisfies the corresponding SRC. This shows that \eqref{id:finalCOND} holds and consequently the proof of this proposition is over.
\end{proof}

The next step will be to construct the scattering solution $u_{\rm sc}^\pm (\centerdot, y)$ as solution of the equation
\begin{equation}
\big(I - (\Delta + \lambda \pm i 0 - V^0)^{-1} \circ (\alpha\, \dd \sigma)\big)u_{\rm sc}^\pm(\centerdot, y) = f^\pm(\centerdot, y) \, \, \textnormal{in}\, \R^d
\label{eq:Fredholm}
\end{equation}
with $f^\pm(\centerdot, y) = (\Delta + \lambda \pm i 0 - V^0)^{-1}( V u_{\rm in}^\pm(\centerdot, y))$. Note that testing the operator $(\Delta + \lambda - V^0)$ with both sides of the identity \eqref{eq:Fredholm}, and applying the proposition \ref{prop:resolventV0}, we have that $u_{\rm sc}^\pm(\centerdot, y)$ solves the equation
\[ (\Delta + \lambda - V)u_{\rm sc}^\pm(\centerdot, y) = V u_{\rm in}^\pm(\centerdot, y) \, \, \textnormal{in}\, \R^d. \]
Moreover, since
\begin{equation}
u_{\rm sc}^\pm(\centerdot, y) = (\Delta + \lambda \pm i 0 - V^0)^{-1} [ (\alpha\, \dd \sigma) u_{\rm sc}^\pm(\centerdot, y) + V u_{\rm in}^\pm(\centerdot, y)]
\label{id:usc}
\end{equation}
we also have, by the proposition \ref{prop:resolventV0} that $u_{\rm sc}^\pm(\centerdot, y)$ satisfies the Sommerfeld radiation condition:
\begin{equation}
\sup_{|x|=R} \Big| \frac{x}{|x|} \cdot \nabla u_{\rm sc}^\pm (x, y) \mp i \lambda^{1/2} u_{\rm sc}^\pm (x, y)  \Big| = o_y(R^{-\frac{d-1}{2}}).
\label{cond:Sommerfeld}
\end{equation}
Thus, in order to prove the theorem \ref{th:scattering} is enough to solve the equation \eqref{eq:Fredholm}.

To invert the operator $\big(I - (\Delta + \lambda \pm i 0 - V^0)^{-1} \circ (\alpha\, \dd \sigma)\big)$ in $X_\lambda^\ast$ we use the Fredholm alternative. The first point
to be checked is the injectivity in $X_\lambda^\ast$ of the operator
\begin{equation}
\big(I - (\Delta + \lambda \pm i 0 - V^0)^{-1} \circ (\alpha\, \dd \sigma)\big).
\label{term:Fredholm_operator}
\end{equation}
The second point is to verify that $(\Delta + \lambda \pm i 0 - V^0)^{-1} \circ (\alpha\, \dd \sigma)$ is compact in $X_\lambda^\ast$.
%

Start by proving the compactness. By the proposition \ref{prop:resolventV0} it is sufficient to show that the multiplication by $\alpha\, \dd \sigma$ is compact from $X_\lambda^\ast$ to $X_\lambda$. Note that multiplication by $\alpha\, \dd \sigma$ is defined by
\[\langle f (\alpha \,\dd \sigma) , g \rangle = \langle \alpha \,\dd \sigma , f g \rangle = \int_\Gamma \alpha f g \, \dd \sigma. \]
Considering $\chi \in \mathcal{S}(\R^d)$ so that it does not vanish on $\Gamma$, we can write
\[\langle f (\alpha \,\dd \sigma) , g \rangle = \int_\Gamma \frac{\alpha}{\chi} (\chi f) g \, \dd \sigma, \]
which means that the operator multiplication by $\alpha \,\dd \sigma$ 
can be factorized as a composition of three operators, multiplication by $\chi$, restriction to $\Gamma$ ---trace operator--- and multiplication by $\alpha/\chi \,\dd \sigma $. Multiplication by $\alpha/\chi \,\dd \sigma$ is bounded from $L^2(\Gamma)$ to $X_\lambda$. This is a straightforward consequence of the Cauchy--Schwarz inequality, the theorem \ref{th:trace} and the definition \ref{def:Ylambdaast} ---in the sections \ref{sec:trace} and \ref{sec:refined}, respectively--- and the proposition
\ref{prop:Xlambdaast}. On the other hand, the trace on $\Gamma$ is a bounded 
operator from $\dot{B}^{1/2}_{2, 1}(\R^d)$ to $L^2(\Gamma)$ ---this is a 
Besov-space form of Theorem 14.1.1 in \cite{zbMATH02123716}. Recall that the
semi-norm of the homogeneous Besov space
$\dot{B}^{1/2}_{2, 1}(\R^d)$ is given by
\[ \| f \|_{\dot{B}^{1/2}_{2, 1}} = \sum_{l\in\Z} 2^{l/2}\|P_l f\|_{L^2}. \]
Finally, 
multiplication by $\chi$ is a compact operator 
from $X_\lambda^\ast$ to $\dot{B}^{1/2}_{2, 1}(\R^d)$
at least when $\chi$ is defined by
\begin{equation*}
\chi(x) = \frac{1}{(2 \pi)^{d /2}} \int_{\R^d} e^{i \delta/R_0 x \cdot \xi} \phi(\xi) \, \dd \xi
\end{equation*}
with $\phi \in \mathcal{S}(\R^d)$ be a $[0,1]$-valued function 
supported in $ \{ \xi \in \R^d: |\xi| \leq 1 \}$ and it is not 
identically zero, and $\delta \in (0, 1]$ chosen so that
\[ \Big| \int_{\R^d} e^{i x \cdot \xi} \phi(\xi) \, \dd \xi \Big| \geq  \frac{1}{2} \int_{\R^d} \phi(\xi) \, \dd \xi > 0 \]
whenever $|x| \leq  \delta$. 
The compactness is a consequence 
of the lemma \ref{lem:compact} and the definition \ref{def:Ylambdaast} ---in the sections \ref{sec:trace} and \ref{sec:refined}, respectively--- and the 
proposition \ref{prop:Xlambdaast}. Therefore, the operator multiplication by 
$\alpha \, \dd \sigma$ is a compact operator from $X_\lambda^\ast$ to 
$X_\lambda$. This conclude the proof of the compactness of
$(\Delta + \lambda \pm i 0 - V^0)^{-1} \circ (\alpha\, \dd \sigma)$ in 
$X_\lambda^\ast$.

Continue by proving the injectivity. Let $v^\pm (\centerdot, y) \in X_\lambda^\ast$ be in the kernel of \eqref{term:Fredholm_operator} and note that it satisfies that
\begin{equation}
v^\pm(\centerdot, y) = (\Delta + \lambda \pm i 0 - V^0)^{-1} [ (\alpha\, \dd \sigma) v^\pm(\centerdot, y)].
\label{id:vpm}
\end{equation}
Hence, by the proposition \ref{prop:resolventV0}, $v^\pm(\centerdot, y)$ satisfies the Sommerfeld radiation condition \eqref{cond:Sommerfeld}. Furthermore, testing $(\Delta + \lambda - V^0)$ with $v^\pm(\centerdot, y)$, and using the identities \eqref{id:vpm} and the proposition \ref{prop:resolventV0}, we obtain that $v^\pm(\centerdot, y)$ is solution of the equation
\[(\Delta + \lambda - V ) v^\pm(\centerdot, y) = 0  \, \, \textnormal{in}\, \R^d. \]
A direct application of the lemma \ref{lem:UniCont} below will show that $v^\pm(\centerdot, y)$ has to be identically zero. For that will need to show that $v^\pm$ belongs to $H^1_{\rm loc}(\R^d)$, which is a consequence of the inclusion $X^\ast_\lambda \subset H^1_{\rm loc}(\R^d)$ proved in the lemma \ref{lem:inclusion} below as well. Thus, we can use the Fredholm alternative to invert the operator \eqref{term:Fredholm_operator}, and construct $u_{\rm sc}^\pm(\centerdot, y)$ solving the equation \eqref{eq:Fredholm}. As we have already explained, this is the scattering solution we wanted, which ends the proof of the existence part of the theorem \ref{th:scattering}. The uniqueness part is again a direct application of the lemmas \ref{lem:UniCont} and \ref{lem:inclusion}.
%

\begin{lemma}\label{lem:UniCont}\sl Consider $d \geq 3$. If $u^\pm \in H^1_{\rm loc}(\R^d)$ is a solution of
\[(\Delta + \lambda - V ) u^\pm = 0  \, \, \textnormal{in}\, \R^d \]
that satisfies the radiation condition 
\[\sup_{|x|=R} \Big| \frac{x}{|x|} \cdot \nabla u^\pm (x) \mp i \lambda^{1/2} u^\pm (x)  \Big| = o(R^{-\frac{d-1}{2}}) \]
for all $R \geq R_0$, then $u^\pm$ has to be identically zero. 
\end{lemma}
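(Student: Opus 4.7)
My plan is to split the argument into two stages: first show that $u^\pm$ vanishes outside $\overline{B_0}$ by a Rellich-type argument, and then propagate this vanishing into $B_0$ by a unique continuation adapted to potentials of the form~\eqref{id:V}.

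For the exterior step, on $\R^d\setminus\overline{B_0}$ the potential vanishes, so $u^\pm$ is an $H^1_{\rm loc}$ solution of the homogeneous Helmholtz equation $(\Delta+\lambda)u^\pm=0$ there and hence, by interior elliptic regularity, is smooth. I would expand $\int_{|x|=R}\bigl|\partial_r u^\pm\mp i\lambda^{1/2}u^\pm\bigr|^2\,dS$, integrate by parts using the equation to cancel the cross terms, and combine this with $\sup_{|x|=R}\bigl|\partial_r u^\pm\mp i\lambda^{1/2}u^\pm\bigr|=o(R^{-(d-1)/2})$ to obtain $\int_{|x|=R}|u^\pm|^2\,dS\to 0$ as $R\to\infty$. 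Rellich's uniqueness lemma then forces $u^\pm\equiv 0$ on $\R^d\setminus\overline{B_0}$.

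With $u^\pm$ vanishing on the open exterior of $\overline{B_0}$, it remains to propagate this across $\partial B_0$ and through $B_0$. This is a unique continuation question for $(\Delta+\lambda-V)u=0$ with $V=V^0+\alpha\,\dd\sigma$, and I would invoke the Carleman estimate of Caro and Rogers~\cite{zbMATH06534426} alluded to in the introduction. That estimate is formulated in Bourgain-type spaces whose dual controls $L^{p_d}$ in the spirit of~\eqref{in:H} and which admit a well-defined trace on Lipschitz hypersurfaces; these two features make it possible to absorb $V^0 u^\pm$ into the left-hand side via H\"older with $V^0\in L^{d/2}$, and to absorb $(\alpha\,\dd\sigma)u^\pm$ via the trace theorem together with $\alpha\in L^\infty(\Gamma)$. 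Running the standard Carleman scheme with weights $e^{\tau\varphi}$ centred at any point on the boundary of the putative support of $u^\pm$, for $\tau$ large enough the two potential terms are absorbed into the left-hand side and a contradiction is reached with the assumption that $u^\pm$ does not vanish near that point; iterating over a sphere of balls covering $B_0$ then yields $u^\pm\equiv 0$ throughout $\R^d$.

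The main obstacle is precisely this interior step: classical unique continuation results do not simultaneously handle a critically-singular $L^{d/2}$ component and a surface-measure component on a Lipschitz hypersurface, and the combination is exactly what requires the refined estimate of~\cite{zbMATH06534426}. The only remaining technical task is to verify that the localized and weighted version of $u^\pm$ lies in the functional space where that estimate is applicable, which follows from the $H^1_{\rm loc}$ hypothesis guaranteeing a trace on $\Gamma$ and enough integrability on either side. The restriction $d\geq 3$ enters both through the Sobolev-type embedding used to control $V^0$ at the endpoint and through the construction of the Carleman weight.
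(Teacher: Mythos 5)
Your exterior step is essentially the paper's: expand $\int_{|x|=R}|\partial_r u^\pm\mp i\lambda^{1/2}u^\pm|^2\,\dd S$, integrate by parts in the annulus $B\setminus\overline{B_0}$ to move the flux term to $\partial B_0$, and invoke the SRC and Rellich's lemma. One point you leave implicit and should make explicit: transferring the flux to $\partial B_0$ is not enough; to conclude $\int_{\partial B}|u^\pm|^2\,\dd S\to 0$ you must also show $\Im\int_{\partial B_0}\partial_\nu u^\pm\,\overline{u^\pm}\,\dd S=0$, and this requires integrating by parts \emph{inside} $B_0$ and using that both $V^0$ and $\alpha$ are real-valued (together with $u^\pm\in H^1_{\rm loc}$ to make sense of the pairing with $Vu^\pm$). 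This is exactly where the reality hypothesis on the potential enters, and your phrase ``integrate by parts using the equation to cancel the cross terms'' hides it.

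For the interior step you identify the right tool --- the Caro--Rogers estimate in the spaces $Y^s_{\tau,M}$, with the embedding into $L^{p_d}$ absorbing $V^0$ and the Besov trace theorem absorbing $\alpha\,\dd\sigma$ --- but the deployment you propose (a local unique continuation statement near boundary points of the support, iterated over a covering of $B_0$) is both unnecessary and, as sketched, has a genuine gap. A local propagation argument forces you to cut off $u^\pm$, producing commutator terms $[\Delta,\chi]u^\pm$ supported where $u^\pm$ is not yet known to vanish, and these must be dominated by a weight with suitable level sets; the estimate \eqref{in:CR}, with its fixed weight $\varphi=\tau x_d+Mx_d^2/2$ and its hypothesis that the function be compactly supported in a ball, is a global a priori estimate, not a local Carleman inequality across a hypersurface, and none of these issues are addressed in your sketch. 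The paper avoids them entirely: after the Rellich step, $u^\pm$ belongs to $H^1(\R^d)$, is supported in $\overline{B_0}$, and solves $(\Delta+\lambda-V)u^\pm=0$ in all of $\R^d$, so a \emph{single global} application of the perturbed Carleman estimate to $u^\pm$ itself gives $\|e^\varphi u^\pm\|_{Y^{1/2}_{\tau,M}}\le C\|e^\varphi(\Delta+\lambda-V)u^\pm\|_{Y^{-1/2}_{\tau,M}}=0$, hence $u^\pm\equiv 0$, with no covering and no contradiction argument. The remaining work is then concrete: (i) perturb \eqref{in:CR} by $\lambda-V$, choosing first $M$ large to absorb the $\delta$-shell term, then splitting $V^0$ at a height $N$ and choosing $\tau$ large to absorb the rest; and (ii) a density argument passing from Schwartz functions supported in $B_0$ to $H^1$ functions supported in $\overline{B_0}$, using that $Y^{1/2}_{\tau,M}$ and $H^1(\R^d)$ coincide as sets. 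You should replace the covering scheme by this direct global application.
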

\begin{proof}
The restriction of $u^\pm$ to $\R^d \setminus \supp V$ is solution of $(\Delta + \lambda) u^\pm = 0$ in $\R^d \setminus \supp V$. By Theorem 11.1.1 in \cite{zbMATH02123716} this restriction is smooth, and we have that
\begin{equation}
\int_{\partial B} \big| \partial_\nu u^\pm \mp i \lambda^{1/2} u^\pm \big|^2 \, \dd S = \int_{\partial B} | \partial_\nu u^\pm |^2 + \lambda | u^\pm |^2 \mp 2\lambda^{1/2} \Im(\partial_\nu u^\pm \overline{u^\pm})  \, \dd S
\label{id:SRC-Rellich}
\end{equation}
where $\partial_\nu = \nu \cdot \nabla$ with $\nu$ the exterior unit normal vector to $\partial B $ ---the boundary of $ B = \{ x\in \R^d: |x| < R \}$--- and $\Im$ denotes the imaginary part.
Extending $\nu$ to be the exterior unit normal vector to $\partial (B \setminus \overline{B_0})$ and integrating by parts in $B \setminus \overline{B_0}$, we have that
\begin{align*}
2\int_{\partial B} \Im(\partial_\nu u^\pm \overline{u^\pm})  \, \dd S &= -i \int_{\partial B} \partial_\nu u^\pm \overline{u^\pm} - \overline{\partial_\nu u^\pm} u^\pm \, \dd S \\
&= i \int_{\partial B_0} \partial_\nu u^\pm \overline{u^\pm} - \overline{\partial_\nu u^\pm} u^\pm \, \dd S = -2\int_{\partial B_0} \Im(\partial_\nu u^\pm \overline{u^\pm})  \, \dd S.
\end{align*}
Thus, taking limit, when $R$ goes to infinity, in the identity \eqref{id:SRC-Rellich} yields
\[\lim_{R\to\infty} \int_{\partial B} | \partial_\nu u^\pm |^2 + \lambda | u^\pm |^2 \, \dd S = \mp 2\lambda^{1/2} \int_{\partial B_0} \Im(\partial_\nu u^\pm \overline{u^\pm})  \, \dd S, \]
by the corresponding SRC. Since we assumed $V^0$ and $\alpha$ to be real-valued, we have integrating by parts now in $B_0$ that $\int_{\partial B_0} \Im(\partial_\nu u^\pm \overline{u^\pm})  \, \dd S = 0$, which implies that $\lim_{R\to\infty} \int_{\partial B} | u^\pm |^2 \, \dd S = 0$, and consequently, by Rellich's lemma, $\supp u^\pm \subset \overline{B_0}$ and $u^\pm \in H^1(\R^d)$.

It remains to prove that $u^\pm$ also vanishes in $B_0$, we do it using a Carleman estimate that Caro and Rogers proved in \cite{zbMATH06534426}. This estimate holds for a modified family of Bourgain-type spaces whose norms were
\[\| u \|_{Y^s_{\tau, M}} = \| (M \tau^2 + M^{-1} |q_\tau|^2)^{s/2} \widehat{u} \|_{L^2} \]
with $M, \tau \in [1, \infty)$, $s \in \R$ and $q_\tau (\xi) = - |\xi|^2 + i 2\tau \xi_d + \tau^2$. The estimate, stated in Theorem 2.1 from \cite{zbMATH06534426}, reads as follows. Set $\varphi(x) = \tau x_d + M x_d^2/2$ and $R \geq 1$. There exists an absolute constant $C > 0$, such that, if $M > CR^2$, then
\begin{equation}
\| u \|_{Y_{\tau, M}^{1/2}} \leq CR \| e^\varphi \Delta (e^{-\varphi} u) \|_{Y_{\tau, M}^{-1/2}}
\label{in:CR}
\end{equation}
for all $u \in \mathcal{S}(\R^d)$ with $\supp u \subset \{ x \in \R^d: |x|< R \}$ and $\tau > 8MR$. This inequality can be perturbed to consider the operator $\Delta + \lambda - V$ tested in any function in $H^1(\R^d)$ with support in $\overline{B_0}$. Indeed, start by estimating $(\lambda - V) u$ in $Y^{-1/2}_{\tau, M}$, by duality, with $u \in \mathcal{S}(\R^d)$ supported in $B_0$:
\begin{equation}
\langle (\lambda - V)u, v \rangle = \lambda \int_{\R^d} uv \, + \int_{\R^d} V^0 uv \, + \int_\Gamma \alpha uv \, \dd\sigma.
\label{term:2Bestimated}
\end{equation}
The first term on the right-hand side of \eqref{term:2Bestimated} can be easily bounded by the Cauchy--Schwarz inequality
\[\big| \lambda \int_{\R^d} uv \, \Big| \leq \lambda \| u \|_{L^2} \| v \|_{L^2} \leq \lambda M^{-1/2} \tau^{-1} \| u \|_{Y^{1/2}_{\tau, M}} \| v \|_{Y^{1/2}_{\tau, M}}.\]
To estimate the second term on the right-hand side of \eqref{term:2Bestimated}, we do as in the corollary \ref{cor:multiplication}
$$
\int_{\R^d} V^0 uv \, = \int_{\R^d}\mathbf{1}_E V^0 uv \, + \int_{\R^d}\mathbf{1}_F V^0 uv \,,
$$
where $E=\{x\in\R^d : |V^0(x)|\leq N\}$, $F=\R^d\setminus E$ and $N$ to be chosen. Thus, we have by the Cauchy--Schwarz and H\"older's inequalities 
\begin{align*}
\Big| \int_{\R^d} V^0 uv \, \Big| &\leq N \| u \|_{L^2} \| v \|_{L^2} + \| \mathbf{1}_F V^0 \|_{L^{d/2}} \| u \|_{L^{p_d}} \| v \|_{L^{p_d}}\, \\
&\leq N M^{-1/2} \tau^{-1} \| u \|_{Y^{1/2}_{\tau, M}} \| v \|_{Y^{1/2}_{\tau, M}} + \| \mathbf{1}_F V^0 \|_{L^{d/2}} \| u \|_{\dot{Y}^{1/2}_\tau} \| v \|_{\dot{Y}^{1/2}_\tau}.
\end{align*}
In the last inequality we have used Haberman's embedding ---see the corollary \ref{cor:H} in the section \ref{sec:others}. By the definition of the norm of the space $Y^{1/2}_{\tau, M}$ we have that
\[ \Big| \int_{\R^d} V^0 uv \, \Big| \leq ( N M^{-1/2} \tau^{-1} + M^{1/2} \| \mathbf{1}_F V^0 \|_{L^{d/2}} ) \| u \|_{Y^{1/2}_{\tau, M}} \| v \|_{Y^{1/2}_{\tau, M}}. \]
Finally, we estimate the third term on the right-hand side of the identity \eqref{term:2Bestimated}. To do so, we use the Besov-space form of Theorem 14.1.1 in \cite{zbMATH02123716} and see that
\begin{align*}
\Big| \int_\Gamma \alpha uv \, \dd\sigma \Big| \leq \| \alpha\|_{L^\infty(\Gamma)} \Big(&  \sum_{k \leq l_\tau + 1} 2^{k/2} \| P_k u \|_{L^2} \sum_{l \leq l_\tau + 1} 2^{k/2} \| P_l v \|_{L^2} \\
& + \sum_{k \leq l_\tau + 1} 2^{k/2} \| P_k u \|_{L^2} \sum_{l > l_\tau + 1} 2^{l/2} \| P_l v \|_{L^2} \\
& + \sum_{k > l_\tau + 1} 2^{k/2} \| P_k u \|_{L^2} \sum_{l \in \Z} 2^{l/2} \| P_l v \|_{L^2} \Big),
\end{align*}
where $l_\tau \in \Z$ satisfies that $2^{l_\tau - 1} < \tau \leq 2^{l_\tau}$. If $k>l_\tau + 1$, we have that $ 2^{k/2} |\widehat{P_k u}(\xi)| \simeq 2^{-k/2} |q_\tau(\xi)|^{1/2} |\widehat{P_k u}(\xi)|$ for all $\xi \in \R^d$. Hence, for the high frequencies we have
\begin{align*}
\sum_{k > l_\tau + 1} 2^{k/2} \| P_k u\|_{L^2} & \simeq \sum_{k > l_\tau + 1} 2^{-k/2}\||q_\tau|^{1/2} \widehat{P_k u}\|_{L^2} \\
& \lesssim \tau^{-1/2} \| u \|_{\dot{Y}^{1/2}_\tau} \leq \tau^{-1/2} M^{1/4} \| u \|_{Y^{1/2}_{\tau, M}}.
\end{align*}
On the other hand, for the low frequencies we have that
\[\sum_{k \leq l_\tau + 1} 2^{k/2} \| P_k u \|_{L^2} \lesssim \tau^{1/2} \|  u\|_{L^2} \leq M^{-1/4} \| u \|_{Y^{1/2}_{\tau, M}}. \]
Combining the previous inequalities for the high and low frequencies we obtain that there exists an absolute constant $C^\prime > 0$ such that
\[ \Big| \int_\Gamma \alpha uv \, \dd\sigma \Big| \leq C^\prime \| \alpha\|_{L^\infty(\Gamma)} ( M^{-1/2} + \tau^{-1/2} +\tau^{-1} M^{1/2} ) \| u \|_{Y^{1/2}_{\tau, M}} \| v \|_{Y^{1/2}_{\tau, M}}.  \]
We now choose $M$ so that $CR_0C^\prime \|\alpha \|_{L^\infty(\Gamma)} M^{-1/2} \leq 1/4$, then we choose $N$ such that $C R_0 M^{1/2} \| \mathbf{1}_F V^0 \|_{L^{d/2}} \leq 1/4$, and finally, we consider $\tau$ to have
$$C R_0 [(\lambda + N) M^{-1/2} \tau^{-1} + C^\prime \| \alpha\|_{L^\infty(\Gamma)} ( \tau^{-1/2} +\tau^{-1} M^{1/2} ) ] < 1/4. $$
Therefore, we can conclude that there exists a $\tau_0 = \tau_0 (R_0, \| \alpha\|_{L^\infty(\Gamma)}, V^0, \lambda)$  such that
\begin{equation}
\| u \|_{Y_{\tau, M}^{1/2}} \leq 4CR_0 \| e^\varphi \big( \Delta + \lambda - V \big) (e^{-\varphi} u) \|_{Y_{\tau, M}^{-1/2}}
\label{es:a-priori}
\end{equation}
for all $u \in \mathcal{S}(\R^d)$ with $\supp u \subset B_0$ and $\tau > \tau_0$. One can check that $Y^{1/2}_{\tau, M}$ and $H^1(\R^d)$ are equal as sets, and that, for every $u \in H^1(\R^d)$ with $\supp u \subset \overline{B_0}$, we have $e^\varphi \big( \Delta + \lambda - V \big) (e^{-\varphi} u) \in Y^{-1/2}_{\tau, M}$. Thus , by a density argument
\begin{equation}
\| e^\varphi u \|_{Y_{\tau, M}^{1/2}} \leq 4CR_0 \| e^\varphi \big( \Delta + \lambda - V \big) u \|_{Y_{\tau, M}^{-1/2}}
\label{in:Carleman}
\end{equation}
for all $u \in H^1(\R^d)$ with $\supp u \subset \overline{B_0}$ and $\tau > \tau_0$. Since $u^\pm$ is supported in $\overline{B_0}$, belongs to $H^1(\R^d)$ and solves $(\Delta + \lambda - V) u^\pm = 0$ in $\R^d$, we have that $u^\pm$ is identically zero by applying the inequality \eqref{in:Carleman}.
\end{proof}

\begin{lemma}\label{lem:inclusion}\sl Every $u \in X^\ast_\lambda $ belongs to $ H^1_{\rm loc}(\R^d)$.
\end{lemma}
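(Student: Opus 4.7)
The plan is to split $u\in X_\lambda^\ast$ according to the Littlewood--Paley decomposition into its three natural frequency pieces
\[
u = P_{<I} u + \sum_{k\in I} P_k u + \sum_{k>k_\lambda+1} P_k u
\]
(valid in $\mathcal{S}'(\R^d)$ because, by the proposition \ref{prop:Xlambdaast}, $X_\lambda^\ast$ embeds in $\mathcal{S}'(\R^d)$), and show that each summand already belongs to $H^1_{\rm loc}(\R^d)$. The controlling information on each piece is read off from the equivalent norm of $X_\lambda^\ast$ provided by the proposition \ref{prop:Xlambdaast}.

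For the low-frequency piece $P_{<I} u$, the Fourier support lies in $\{|\xi|\le 2^{k_\lambda-2}\}$, where $m_\lambda(\xi)=\lambda-|\xi|^2\simeq \lambda$. Consequently
\[
\|P_{<I} u\|_{L^2} \lesssim \lambda^{-1/2}\|m_\lambda^{1/2}\widehat{P_{<I} u}\|_{L^2} < \infty,
\]
and Bernstein's inequality (thanks to the bounded Fourier support) promotes this to $P_{<I}u\in H^1(\R^d)$. For the high-frequency tail $\sum_{k>k_\lambda+1}P_k u$, on each dyadic piece $|\xi|\ge 2\lambda^{1/2}$ and hence $m_\lambda(\xi)=|\xi|^2-\lambda\simeq |\xi|^2\simeq 2^{2k}$, which gives $\|\nabla P_k u\|_{L^2}\simeq \|m_\lambda^{1/2}\widehat{P_k u}\|_{L^2}$ and also $\|P_k u\|_{L^2}\lesssim 2^{-k}\|m_\lambda^{1/2}\widehat{P_k u}\|_{L^2}$. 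By the finite overlap of the Fourier supports of the $P_k$'s one obtains
\[
\Big\|\sum_{k>k_\lambda+1}P_k u\Big\|_{H^1}^2 \lesssim \sum_{k>k_\lambda+1}\|m_\lambda^{1/2}\widehat{P_k u}\|_{L^2}^2 < \infty,
\]
so this tail is in $H^1(\R^d)$ as well.

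The middle-frequency contribution $\sum_{k\in I}P_k u$ is a sum of just four terms, for which the key observation is that the $L^{q_d}$-norm appearing in the characterization of $\|u\|_{X_\lambda^\ast}$ is the right quantity to exploit (the norm $\vvvert\centerdot\vvvert_\ast$ is only local and does not by itself transfer to derivatives). Since $2/q_d=(d-1)/(d+1)<1$ yields $q_d>2$, H\"older on compact sets embeds $L^{q_d}(\R^d)\hookrightarrow L^2_{\rm loc}(\R^d)$, so $P_k u\in L^2_{\rm loc}$. Because $\widehat{P_k u}$ is supported in $\{2^{k-1}\le|\xi|\le 2^{k+1}\}$ with $2^k\simeq \lambda^{1/2}$, Bernstein's inequality gives $\|\nabla P_k u\|_{L^{q_d}}\lesssim 2^k\|P_k u\|_{L^{q_d}}<\infty$, so $\nabla P_k u\in L^{q_d}\hookrightarrow L^2_{\rm loc}$. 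Hence each middle term lies in $H^1_{\rm loc}(\R^d)$.

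Adding the three contributions proves $u\in H^1_{\rm loc}(\R^d)$. I do not expect a real obstacle here: the only point worth handling carefully is the middle range, where one must remember to use the $L^{q_d}$-component of the $X_\lambda^\ast$ norm together with Bernstein in order to pass from $u$ to $\nabla u$; the low and high ranges are handled by the observation that $m_\lambda^{1/2}$ is comparable to $\lambda^{1/2}$ and to $|\xi|$ respectively, directly delivering the $H^1$ control.
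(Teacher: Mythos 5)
Your proof is correct and follows essentially the same route as the paper's: the middle frequencies $k\in I$ are handled exactly as in the paper (H\"older on compact sets from $L^{q_d}$ into $L^2_{\rm loc}$ together with Bernstein for the derivative, using that $I$ has only four elements), and your separate treatment of the low and high pieces via $m_\lambda^{1/2}\simeq\lambda^{1/2}$ and $m_\lambda^{1/2}\simeq|\xi|$ is just an unbundled version of the paper's single estimate for $u_{\Z\setminus I}=u-\sum_{k\in I}P_k u$ through $\|(\lambda-\Delta)^{1/2}u_{\Z\setminus I}\|_{L^2}$.
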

\begin{proof}
Consider $u \in X^\ast_\lambda$ and set $u_I = \sum_{k \in I} P_k u$ and $u_{\Z \setminus I} = u - u_I$. Let us show that $u_I$ belongs to $ H^1_{\rm loc} (\R^d)$ and $u_{\Z \setminus I} $ is in $ H^1(\R^d)$. Let $K$ be a compact subset of
$\R^d$ and $\alpha = (\alpha_1, \dots, \alpha_d) \in \N_0^d$ denote a multi-index such that $|\alpha| = \alpha_1 +\dots + \alpha_d \leq 1$, we have that
\[\| \partial^\alpha u_I \|_{L^2(K)} \leq \sum_{k \in I} \| \partial^\alpha P_k u \|_{L^2(K)}. \]
By H\"older's inequality, Bernstein's inequality for $\partial^\alpha$, and the fact that $I$ only contains four elements, we have
\[ \| \partial^\alpha u_I \|_{L^2(K)} \lesssim \sum_{k \in I} \| \partial^\alpha P_k u \|_{L^{q_d}} 
 \lesssim \sum_{k \in I} 2^{|\alpha|k} \| P_k u \|_{L^{q_d}} \lesssim \Big( \sum_{k \in I} 2^{2k} \| P_k u \|_{L^{q_d}}^2 \Big)^\frac{1}{2}
.\]
Thus,
\begin{equation*}
\| \partial^\alpha u_I \|_{L^2(K)} \lesssim \lambda^{\frac{1}{2} - \frac{d}{2}(\frac{1}{q_d} - \frac{1}{p_d})} \Big( \sum_{k \in I} \lambda^{d(\frac{1}{q_d} - \frac{1}{p_d})} \| P_k u \|_{L^{q_d}}^2 \Big)^\frac{1}{2},
\end{equation*}
which shows that $u_I$ belongs to $H^1_{\rm loc} (\R^d)$. Next we prove that $u_{\Z \setminus I}$ belongs to $H^1(\R^d)$. Let $(\lambda - \Delta)^{1/2}$ denote the multiplier with symbol $(\lambda + |\xi|^2)^{1/2}$. By Plancherel's identity and the finite overlapping of the supports of $\{ P_k u : k > k_\lambda + 1 \}$, we have 
\begin{align*}
\| (\lambda - \Delta)^{1/2} u_{\Z \setminus I} & \|_{L^2}^2 =  \int_{\R^d} (\lambda + |\xi|^2) |\widehat {u_{\Z \setminus I}}(\xi)|^2 \, \dd \xi \\
& \simeq \int_{\R^d} (\lambda + |\xi|^2) |\widehat {P_{<I} u}(\xi)|^2 \, \dd \xi + \sum_{k > k_\lambda + 1} \int_{\R^d} (\lambda + |\xi|^2) |\widehat {P_k u}(\xi)|^2 \, \dd \xi \\
& \simeq \lambda \int_{\R^d} |\widehat {P_{<I} u}(\xi)|^2 \, \dd \xi + \sum_{k > k_\lambda + 1 } 2^{2k} \int_{\R^d} |\widehat {P_k u}(\xi)|^2 \, \dd \xi.
\end{align*}
Note that 
$ \lambda^{1/2} |\widehat{P_{<I} u}(\xi)| \simeq m_\lambda(\xi)^{1/2} |\widehat{P_{<I} u}(\xi)|$ for all $\xi \in \R^d$. While, if $k>k_\lambda+1$, we have that $ 2^k |\widehat{P_k u}(\xi)| \simeq m_\lambda(\xi)^{1/2} |\widehat{P_k u}(\xi)|$. Hence,
\begin{equation*}
\| (\lambda - \Delta)^{1/2} u_{\Z \setminus I} \|_{L^2}^2 \simeq \| m_\lambda^{1/2} \widehat{P_{<I} u} \|_{L^2}^2 + \sum_{k > k_\lambda+1} \| m_\lambda^{1/2} \widehat{P_k u} \|_{L^2}^2,
\end{equation*}
which proves that $u_{\Z \setminus I}$ belongs to $H^1(\R^d)$. This ends the proof of this lemma.
\end{proof}

We finish this section by stating an inequality which will be essential to address the inverse scattering problem.

\begin{lemma}\label{lem:a-priori}\sl Consider $d\geq 3$. There exist $M = M(R_0, \| \alpha \|_{L^\infty(\Gamma)})$, and positive constants $C = C(R_0)$ and $\tau_0 = \tau_0 (R_0, \| \alpha \|_{L^\infty(\Gamma)}, V^0, \lambda)$ such that
\begin{equation*}
\| u \|_{Y_{\tau, M}^{1/2}} \leq C \| e^\varphi \big( \Delta + \lambda - T^\ast V \big) (e^{-\varphi} u) \|_{Y_{\tau, M}^{-1/2}}
\end{equation*}
for all $u \in \mathcal{S}(\R^d)$ with $\supp u \subset B_0$ and $\tau > \tau_0$. Here $T^\ast V$ denotes the following potential
\[\langle T^\ast V, \phi \rangle = \int_{\R^d} T^\ast V^0 \, \phi \, + \int_{T^t \Gamma} T^\ast \alpha \, \phi \, \dd T^\ast\sigma,\]
where $T \in SO(d)$, $T^\ast V^0 (x) = V^0 (Tx)$, $T^\ast \alpha (x) = \alpha (Tx)$, $T^t \Gamma = \{ T^t x : x \in \Gamma \}$ and $T^\ast \sigma (E) = \sigma (TE)$ with $TE = \{ Tx : x \in E \}$.
\end{lemma}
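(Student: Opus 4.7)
The plan is to repeat step by step the perturbation argument carried out at the end of the proof of Lemma \ref{lem:UniCont}, with $V$ replaced by $T^\ast V$, and to verify that every constant absorbed there is invariant under the rotation $T$. Because $B_0$ is rotation invariant, $\supp(T^\ast V)\subset B_0$, so the Caro--Rogers Carleman estimate \eqref{in:CR} applies with the same $R = R_0$. All that is left is to check that, for $u, v\in\mathcal{S}(\R^d)$ supported in $B_0$, the pairing
\[\langle (\lambda - T^\ast V) u, v \rangle = \lambda \int_{\R^d} uv \, + \int_{\R^d} T^\ast V^0 \, uv \, + \int_{T^t \Gamma} T^\ast \alpha \, uv \, \dd T^\ast \sigma\]
admits the same bounds as in Lemma \ref{lem:UniCont}, with multiplicative constants independent of $T$.

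The $\lambda$-term is handled by Cauchy--Schwarz and the embedding $\|u\|_{L^2} \leq M^{-1/2}\tau^{-1}\|u\|_{Y^{1/2}_{\tau,M}}$, exactly as before. For the critically singular term, I would split $T^\ast V^0$ into its bounded part on $E = \{|T^\ast V^0| \leq N\}$ and its $L^{d/2}$-small tail on $F = \R^d \setminus E$, and use the change of variables $y = Tx$ to obtain
\[\| \mathbf{1}_F \, T^\ast V^0 \|_{L^{d/2}} = \| \mathbf{1}_{\{|V^0|>N\}} V^0 \|_{L^{d/2}},\]
which tends to $0$ as $N\to\infty$ uniformly in $T$. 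Combined with Haberman's embedding $\|u\|_{L^{p_d}} \lesssim \|u\|_{\dot Y^{1/2}_\tau} \leq M^{1/4}\|u\|_{Y^{1/2}_{\tau, M}}$, this reproduces the same estimate as in Lemma \ref{lem:UniCont}. For the delta-shell term, $T^t\Gamma$ is a rigid rotation of $\Gamma$ and therefore shares its Lipschitz character and its compact containment in $B_0$; the Besov-space form of Theorem~14.1.1 in \cite{zbMATH02123716} applied to the trace on $T^t\Gamma$ then holds with constants depending only on the intrinsic geometry of $\Gamma$, leading to
\[\Big| \int_{T^t \Gamma} T^\ast \alpha \, uv \, \dd T^\ast \sigma \Big| \leq C^\prime \|\alpha\|_{L^\infty(\Gamma)} ( M^{-1/2} + \tau^{-1/2} + \tau^{-1} M^{1/2} ) \|u\|_{Y^{1/2}_{\tau, M}} \|v\|_{Y^{1/2}_{\tau, M}}.\]

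To finish, I would fix the parameters exactly as in Lemma \ref{lem:UniCont}: first $M = M(R_0, \|\alpha\|_{L^\infty(\Gamma)})$ large enough to dominate the leading delta-shell contribution, then $N = N(V^0, M, R_0)$ so that the $L^{d/2}$ tail is absorbed, and finally $\tau_0 = \tau_0(R_0, \|\alpha\|_{L^\infty(\Gamma)}, V^0, \lambda)$ large enough that the remaining $\tau$-decaying terms are also absorbed into the factor $4CR_0$ on the right-hand side of \eqref{in:CR}. The one point requiring care is the uniformity in $T$ of the Besov trace estimate on $T^t \Gamma$, but this is routine since the $L^2$ norms of the Littlewood--Paley pieces entering $\|\cdot\|_{\dot{B}^{1/2}_{2,1}}$ are isotropic and the trace constant depends only on the intrinsic Lipschitz character of the hypersurface.
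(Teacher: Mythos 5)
Your proposal is correct and follows essentially the same route as the paper, which simply invokes the perturbation argument from the proof of Lemma \ref{lem:UniCont} applied to $T^\ast V$; your additional verifications (rotation invariance of $B_0$, of the $L^{d/2}$ norm of the tail of $V^0$, and of the trace constant for $T^t\Gamma$) are exactly the uniformity-in-$T$ checks the paper leaves implicit. Nothing is missing.
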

\begin{proof}
We start from the inequality \eqref{in:CR} ---due to Caro and Rogers \cite{zbMATH06534426}--- and perturb it to include the potential $T^\ast V$. This procedure is exactly the same as the one used in proof of the lemma \ref{lem:UniCont} to derive the inequality \eqref{es:a-priori} and we will not repeat it.
\end{proof}

\section{Inverse scattering}\label{sec:inverse_scattering}
In this section we adapt to our framework the approach we learnt from 
\cite{zbMATH01670370} by H\"ahner and Hohage. The first step is to
obtain the orthogonality identity \eqref{id:integral}. In order to prove it, we need two lemmas regarding the single layer potential $\mathcal{S}^\pm$ whose kernel is given by the total wave
\[u^\pm_{\rm to}|_{\partial B_0 \times \partial B_0} = u^\pm_{\rm in} |_{\partial B_0 \times \partial B_0} + u^\pm_{\rm sc}|_{\partial B_0 \times \partial B_0}.\]
For $f$ continuous on $\partial B_0$, we define the \textit{single layer potential} as
\[\mathcal{S}^\pm f (x) = \int_{\partial B_0} u^\pm_{\rm to}(x,\centerdot) f  \, \dd S,\]
for $x\in \partial B_0$
where $\dd S$ denotes the volume form on $\partial B_0$.
\begin{lemma}\label{lem:single_layer}\sl The scattering solution of the theorem \ref{th:scattering} satisfies the following reciprocity relation
\[u^\pm_{\rm sc}(x,y) = u^\pm_{\rm sc}(y,x), \quad \forall x,y \in \R^d \setminus \supp V.\]
In particular, the single layer potential $\mathcal{S}^\pm$ is symmetric, that is,
\[\int_{\partial B_0} \mathcal{S}^\pm f g \, \dd S = \int_{\partial B_0} f \mathcal{S}^\pm g \, \dd S \]
for all $f$ and $g$ continuous on $\partial B_0$.
\end{lemma}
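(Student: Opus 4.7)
The plan is to reduce the symmetry of $\mathcal{S}^\pm$ to the pointwise reciprocity $u^\pm_{\rm sc}(x,y) = u^\pm_{\rm sc}(y,x)$ for $x,y \in \R^d \setminus \supp V$, and to derive the latter from a Green-type identity applied to the total waves. The starting observation is that, for any $z \in \R^d \setminus \supp V$, the function $u^\pm_{\rm to}(\centerdot, z) = u^\pm_{\rm in}(\centerdot, z) + u^\pm_{\rm sc}(\centerdot, z)$ solves $(\Delta + \lambda - V) u^\pm_{\rm to}(\centerdot, z) = \delta_z$ distributionally: the incident wave contributes the delta by definition of $\Phi^\pm_\lambda$, while Theorem \ref{th:scattering} provides $(\Delta + \lambda - V) u^\pm_{\rm sc}(\centerdot, z) = V u^\pm_{\rm in}(\centerdot, z)$.

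Given distinct $x, y \in \R^d \setminus \supp V$, the central step is to pair $u^\pm_{\rm to}(\centerdot, x)$ against $u^\pm_{\rm to}(\centerdot, y)$ via Green's formula on a large ball $B_R \supset \supp V \cup \{x,y\}$. Because $V^0$ and $\alpha$ are real-valued and $V$ enters symmetrically in the combined pairing $\int V^0 u_1 u_2 + \int_\Gamma \alpha u_1 u_2 \, \dd\sigma$, all potential contributions cancel after subtraction; the two delta sources yield $u^\pm_{\rm to}(y, x) - u^\pm_{\rm to}(x, y)$; and what survives is the surface integral
\[ \int_{\partial B_R} \bigl(\partial_\nu u^\pm_{\rm to}(\centerdot, x)\, u^\pm_{\rm to}(\centerdot, y) - u^\pm_{\rm to}(\centerdot, x)\, \partial_\nu u^\pm_{\rm to}(\centerdot, y)\bigr) \, \dd S. \]
Since both factors satisfy the \emph{same} SRC, the $\mp i\lambda^{1/2}$ terms cancel inside the bracket, and what remains factorises as (something $o(R^{-\frac{d-1}{2}})$) $\times$ (something $O(R^{-\frac{d-1}{2}})$); multiplied by the surface area $O(R^{d-1})$, the integral is $o(1)$ as $R \to \infty$. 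Hence $u^\pm_{\rm to}(y, x) = u^\pm_{\rm to}(x, y)$, and combining with the evenness $\Phi^\pm_\lambda(y-x) = \Phi^\pm_\lambda(x-y)$ (radiality of the fundamental solution) yields the asserted reciprocity $u^\pm_{\rm sc}(y, x) = u^\pm_{\rm sc}(x, y)$.

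The second claim will then follow by Fubini: since $\supp V \subset B_0$ is open implies $\partial B_0 \subset \R^d \setminus \supp V$, the reciprocity just obtained applies on $\partial B_0 \times \partial B_0$, giving
\[ \int_{\partial B_0} \mathcal{S}^\pm f \cdot g \, \dd S = \iint_{\partial B_0 \times \partial B_0} u^\pm_{\rm to}(x, y) f(y) g(x) \, \dd S(y)\, \dd S(x) = \int_{\partial B_0} f \cdot \mathcal{S}^\pm g \, \dd S. \]

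The main obstacle is the rigorous execution of Green's formula in the presence of the $\delta$-shell component of $V$ and the point singularities of $u^\pm_{\rm to}(\centerdot, z)$ at $z$. I expect to handle this by excising small balls around $x$ and $y$ and using the precise behaviour of $\Phi^\pm_\lambda$ near the source to extract the two delta contributions as the radii shrink (the $u^\pm_{\rm sc}(\centerdot, z)$ parts lie in $H^1_{\rm loc}$ by Lemma \ref{lem:inclusion}, so they carry no extra singular mass), and by invoking a trace bound of the type exploited in Section \ref{sec:scattering} to make sense of $\int_\Gamma \alpha u^\pm_{\rm to}(\centerdot, x) u^\pm_{\rm to}(\centerdot, y)\, \dd\sigma$; once both ingredients are in place, the symmetric dependence of the pairing on $V$ automatically produces the needed cancellation.
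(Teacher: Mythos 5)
Your argument is correct in outline, but it is organised quite differently from the paper's proof, and the comparison is instructive. You apply Green's identity directly to the two total waves on a large ball containing $x$, $y$ and $\supp V$, reading the reciprocity off the two delta sources and cancelling the potential contributions by the symmetry of the pairing $\langle V u_1,u_2\rangle$. The paper instead fixes an intermediate domain $D$ with $\supp V\subset D$ and $x,y\notin\overline D$, and splits the computation into four pieces: a Green identity for the two \emph{scattered} waves in $B\setminus\overline D$ (where they are smooth, singularity-free solutions of the free Helmholtz equation), a Green identity for the two \emph{incident} waves in $D$ (smooth there since $x,y\notin\overline D$), and the exterior boundary-integral representation formulas for $u^\pm_{\rm sc}(\centerdot,x)$ and $u^\pm_{\rm sc}(\centerdot,y)$. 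These reassemble into an expression for $u^\pm_{\rm sc}(x,y)-u^\pm_{\rm sc}(y,x)$ as a $\partial D$ integral of the total waves, which vanishes after one last integration by parts in $D$, where both total waves are honest $H^1$ solutions of $(\Delta+\lambda-V)u=0$ with no point singularities. The payoff of this bookkeeping is precisely to sidestep what you flag as ``the main obstacle'': the paper never has to justify a Green identity in which the delta sources at $x,y$ and the measure-valued potential $\alpha\,\dd\sigma$ appear simultaneously, nor to excise small balls around the singularities. Your route is more direct and is completable --- the singular parts are the explicit $\Phi^\pm_\lambda(z-\centerdot)$, smooth near $\Gamma$ since $x,y\notin\supp V$, and the scattered parts lie in $H^1_{\rm loc}$ by the lemma \ref{lem:inclusion}, so the $\Gamma$-pairing makes sense via the theorem \ref{th:trace} --- but the excision-and-limit step and the weak form of Green's identity across $\Gamma$ (where $\partial_\nu u^\pm_{\rm to}$ jumps) would still have to be written out carefully; the paper trades that work for standard representation formulas. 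Your treatment of the boundary term at infinity (cancellation of the $\mp i\lambda^{1/2}$ terms, then $o(R^{-\frac{d-1}{2}})\times O(R^{-\frac{d-1}{2}})$ against surface area $O(R^{d-1})$) and the Fubini step for the symmetry of $\mathcal S^\pm$ agree with what the paper uses.
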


\begin{proof}
Given $x,y \in \R^d \setminus \supp V$, there exists a bounded domain $D$ containing $\supp V$ so that
$x,y \in \R^d \setminus \overline{D}$ and its boundary is locally 
described by the graphs of twice continuously differentiable 
functions. The restrictions of $u^\pm_{\rm sc}(\centerdot,x)$ and $u^\pm_{\rm sc}(\centerdot,y)$ to $\R^d \setminus \supp V$ are solutions of the equation $(\Delta + \lambda) u = 0$ in $\R^d \setminus \supp V$. By Theorem 11.1.1 in \cite{zbMATH02123716} these restrictions are smooth. Thus, integrating by parts in a $B \setminus \overline{D}$, with $B = \{ x \in \R^d : |x| < R \}$, and making $R$ go to infinity we have that
\begin{equation}
\int_{\partial D} \big[ \partial_\nu u^\pm_{\rm sc}(\centerdot,y) u^\pm_{\rm sc}(\centerdot,x) - u^\pm_{\rm sc}(\centerdot,y) \partial_\nu u^\pm_{\rm sc}(\centerdot,x) \big] \, \dd S = 0
\label{id:boundarySCSC}
\end{equation}
where $\dd S$ denotes the volume form on $\partial D$ and $\nu$ stands for the unit exterior normal vector on $\partial D$. In order to make the integration on $\partial B$ vanish when $R$ goes to infinity, we just need to use the corresponding SRC. On the other hand, since the restrictions of $u^\pm_{\rm in}(\centerdot,x)$ and $u^\pm_{\rm in}(\centerdot,y)$ to $D$ are solutions of the equation $(\Delta + \lambda) u = 0$ in $D$, we have, integrating by parts in $D$, that
\begin{equation}
\int_{\partial D} \big[ \partial_\nu u^\pm_{\rm in}(\centerdot,y) u^\pm_{\rm in}(\centerdot,x) - u^\pm_{\rm in}(\centerdot,y) \partial_\nu u^\pm_{\rm in}(\centerdot,x) \big] \, \dd S = 0.
\label{id:boundaryININ}
\end{equation}
Finally, it is well-known that smooth solutions of $(\Delta + \lambda) u = 0$ in $\R^d \setminus \supp V$ can be represented by a boundary 
integral expression. In particular, since $u^\pm_{\rm in}(\centerdot, z) = \Phi^\pm_\lambda (z - \centerdot)$ the functions $u^\pm_{\rm sc}(\centerdot,y)$ and $u^\pm_{\rm sc}(\centerdot,x)$ can be represented
respectively by
\begin{equation}
u^\pm_{\rm sc}(z,y) = \int_{\partial D} \big[ \partial_\nu u^\pm_{\rm sc}(\centerdot,y) u^\pm_{\rm in}(\centerdot,z) - u^\pm_{\rm sc}(\centerdot,y) \partial_\nu u^\pm_{\rm in}(\centerdot,z) \big] \, \dd S , \quad \forall z \in \R^d \setminus \overline{D},
\label{id:boundarySCIN}
\end{equation}
and
\begin{equation}
u^\pm_{\rm sc}(z,x) = \int_{\partial D} \big[ \partial_\nu u^\pm_{\rm sc}(\centerdot,x) u^\pm_{\rm in}(\centerdot,z) - u^\pm_{\rm sc}(\centerdot,x) \partial_\nu u^\pm_{\rm in}(\centerdot,z) \big] \, \dd S , \quad \forall z \in \R^d \setminus \overline{D}.
\label{id:boundaryINSC}
\end{equation}
Note that evaluating \eqref{id:boundarySCIN} at $z = x$ and \eqref{id:boundaryINSC} at $z = y$, we can compute $u^\pm_{\rm sc}(x,y) - u^\pm_{\rm sc}(y,x)$. Now, using the identities \eqref{id:boundarySCSC} and \eqref{id:boundaryININ} we have
\[u^\pm_{\rm sc}(x,y) - u^\pm_{\rm sc}(y,x) = \int_{\partial D} \big[ \partial_\nu u^\pm_{\rm to}(\centerdot,y) u^\pm_{\rm to}(\centerdot,x) - u^\pm_{\rm to}(\centerdot,y) \partial_\nu u^\pm_{\rm to}(\centerdot,x) \big] \, \dd S. \]
Integrating by parts the right-had side of last identity in $D$ and using that $u^\pm_{\rm to}(\centerdot,y) $ and $ u^\pm_{\rm to}(\centerdot,x)$ are solutions of $(\Delta + \lambda - V) u = 0$ in $D$, we get that
\[u^\pm_{\rm sc}(x,y) - u^\pm_{\rm sc}(y,x) = \langle V u^\pm_{\rm to}(\centerdot,y),  u^\pm_{\rm to}(\centerdot,x) \rangle - \langle V u^\pm_{\rm to}(\centerdot,x), u^\pm_{\rm to}(\centerdot,y) \rangle = 0. \]
This finishes the proof of the first part of this lemma. The second part is a direct consequence of first one since
\[u^\pm_{\rm to}|_{\partial B_0 \times \partial B_0} = u^\pm_{\rm in} |_{\partial B_0 \times \partial B_0} + u^\pm_{\rm sc}|_{\partial B_0 \times \partial B_0}\]
is the kernel of the single layer potential and $u_{\rm in}^\pm(x, y) = \Phi_\lambda^\pm (y - x) $ with $\Phi_\lambda^\pm$ radially symmetric.
\end{proof}

\begin{lemma}\label{lem:kernel_solution}\sl Consider $d \geq 3$. Let $f$ be continuous on $\partial B_0$. Then, the function
\[u^\pm (x) = \int_{\partial B_0} u^\pm_{\rm to}(x,\centerdot) f  \, \dd S \]
is the unique solution in $H^1_{\rm loc}(\R^d)$ of the problem
\begin{equation}
\left\{
\begin{aligned}
& (\Delta + \lambda - V) u^\pm = 0 & & \textsl{in}\, \R^d \setminus \partial B_0,\\
& \partial_\nu u^\pm|_{\R^d \setminus \overline{B_0}} - \partial_\nu u^\pm|_{B_0} = f & & \textsl{on}\, \partial B_0,\\
& \sup_{|x|=R} \Big| \frac{x}{|x|} \cdot \nabla u^\pm (x) \mp i \lambda^{1/2} u^\pm (x)  \Big| = o(R^{-\frac{d-1}{2}})  & & \textsl{for}\, R\geq R_0.
\end{aligned}
\right.
\label{pb:extension}
\end{equation}
Here $\nu$ is the unit exterior normal vector on $\partial B_0$.
\end{lemma}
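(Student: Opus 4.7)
The plan is to verify each of the three conditions in \eqref{pb:extension} by combining the equation satisfied by the kernel $u^\pm_{\rm to}(\cdot, y)$ with the classical theory of single layer potentials, and then to derive uniqueness from Lemma \ref{lem:UniCont}. For every $y \in \partial B_0$, the fundamental-solution identity $(\Delta + \lambda) u^\pm_{\rm in}(\cdot, y) = \delta_y$ and the scattering equation from Theorem \ref{th:scattering} combine to give
\[(\Delta + \lambda - V) u^\pm_{\rm to}(\cdot, y) = \delta_y \quad \textnormal{in } \R^d,\]
together with the SRC uniformly in $y$ on the compact set $\partial B_0$. Integrating against a continuous $f$ and commuting the integral past $\Delta + \lambda - V$ (which is legitimate because the Fredholm construction of Theorem \ref{th:scattering} yields $u^\pm_{\rm sc}(\cdot, y) \in X^\ast_\lambda$ depending continuously on $y \in \partial B_0$, and hence by Lemma \ref{lem:inclusion} in $H^1_{\rm loc}(\R^d)$), we arrive at the distributional identity $(\Delta + \lambda - V) u^\pm = f \, \dd S|_{\partial B_0}$ in $\R^d$. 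This in particular gives $(\Delta + \lambda - V) u^\pm = 0$ in $\R^d \setminus \partial B_0$.

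To read off the jump relation and confirm $u^\pm \in H^1_{\rm loc}(\R^d)$, decompose
\[u^\pm = \mathcal{L}^\pm_{\rm in} f + \mathcal{L}^\pm_{\rm sc} f, \qquad \mathcal{L}^\pm_{\rm in} f(x) = \int_{\partial B_0} \Phi^\pm_\lambda(y - x) f(y) \, \dd S(y),\]
with $\mathcal{L}^\pm_{\rm sc} f$ the analogous integral whose kernel is $u^\pm_{\rm sc}(x, y)$. The first piece is the classical single layer potential associated with the Helmholtz operator $\Delta + \lambda$: it is $H^1_{\rm loc}(\R^d)$, continuous across $\partial B_0$, and its normal derivative satisfies the classical jump relation $\partial_\nu \mathcal{L}^\pm_{\rm in} f|_{\R^d \setminus \overline{B_0}} - \partial_\nu \mathcal{L}^\pm_{\rm in} f|_{B_0} = f$ on $\partial B_0$. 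The second piece is more regular, because $u^\pm_{\rm sc}(\cdot, y) \in X^\ast_\lambda \subset H^1_{\rm loc}(\R^d)$ and, as a solution of $(\Delta + \lambda - V) u = V u^\pm_{\rm in}(\cdot, y)$, is smooth away from $\overline{B_0}$ by elliptic regularity; a continuity argument in $y$ therefore shows that $\mathcal{L}^\pm_{\rm sc} f$ is in $H^1_{\rm loc}(\R^d)$ with continuous one-sided normal traces that coincide on $\partial B_0$, contributing no jump. The SRC for $u^\pm$ follows by dominated convergence from the uniform SRC of the kernel $u^\pm_{\rm to}(\cdot, y)$.

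For uniqueness, let $v \in H^1_{\rm loc}(\R^d)$ be the difference of two solutions. Since $v$ is $H^1_{\rm loc}$ across the Lipschitz hypersurface $\partial B_0$ its trace is automatically continuous, and the equality of the jumps for the two solutions forces the one-sided normal derivatives of $v$ to coincide on $\partial B_0$. Consequently the distribution $(\Delta + \lambda - V) v$ carries no singular part supported on $\partial B_0$, and combined with the equation $(\Delta + \lambda - V) v = 0$ on $\R^d \setminus \partial B_0$ we obtain $(\Delta + \lambda - V) v = 0$ in all of $\R^d$. Since $v$ also satisfies the SRC and $d \geq 3$, Lemma \ref{lem:UniCont} yields $v \equiv 0$.

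The main obstacle I anticipate is the rigorous justification of the jump analysis for $\mathcal{L}^\pm_{\rm sc} f$ in the presence of the singular potentials $V^0 \in L^{d/2}$ and $\alpha \, \dd \sigma$: one must establish that $y \mapsto u^\pm_{\rm sc}(\cdot, y)$ is continuous into a topology strong enough that $\mathcal{L}^\pm_{\rm sc} f$ admits one-sided normal traces across $\partial B_0$ that agree. The embedding $X^\ast_\lambda \subset H^1_{\rm loc}(\R^d)$ from Lemma \ref{lem:inclusion}, the smoothness of $u^\pm_{\rm sc}(\cdot, y)$ outside $\overline{B_0}$, and the continuous dependence coming from the Fredholm construction in Theorem \ref{th:scattering} should jointly suffice, but the verification is the delicate step of the argument.
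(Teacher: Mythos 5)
Your proof is correct and follows essentially the same route as the paper: decompose $u^\pm$ into the incident single-layer part, which carries the classical jump $f$, and the scattered part, which is regular across $\partial B_0$ via $X^\ast_\lambda \subset H^1_{\rm loc}(\R^d)$ and elliptic regularity, and obtain uniqueness by showing that the difference of two solutions has no jump, hence solves $(\Delta+\lambda-V)v=0$ in all of $\R^d$ and vanishes by Lemma \ref{lem:UniCont}. The continuity-in-$y$ issue you flag as delicate is likewise left implicit in the paper's own proof.
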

\begin{proof}
Start by showing that the problem \eqref{pb:extension} has a unique solution in $H^1_{\rm loc}(\R^d)$.
Note that it is enough to show that if $u^\pm $ is a solution 
for $f = 0$, then $u^\pm = 0$.
The restrictions of $u^\pm$ to $B_0 \setminus \supp V$ and $\R^d \setminus \overline{B_0}$ are solutions of the equation $(\Delta + \lambda) u^\pm = 0$ in $B_0 \setminus \supp V$ and $\R^d \setminus \overline{B_0}$ respectively. By Theorem 11.1.1 in \cite{zbMATH02123716} these restrictions are smooth and can be extended by continuity up to the boundary of $B_0$. Since $u^\pm \in H^1_{\rm loc}(\R^d)$, the extensions from both sides of the boundary must coincide.
The facts that $f=0$ and $u^\pm$ is continuous across $\partial B_0$ make $u^\pm$ be a 
solution of $(\Delta + \lambda - V) u^\pm = 0$ in $\R^d$. Since it satisfies the SRC and belongs to $H^1_{\rm loc} (\R^d)$, it has to vanish everywhere by the lemma \ref{lem:UniCont}.

Now we show that $u^\pm$ is solution of \eqref{pb:extension}.
The function $u^\pm$ belongs to $H^1_{\rm loc}(\R^d)$ because $u^\pm_{\rm sc}(\centerdot,y)$ is in $X^\ast_\lambda$ (recall the lemma \ref{lem:inclusion}) and the function
\[v^\pm (x) = \int_{\partial B_0} u^\pm_{\rm in}(x,\centerdot) f  \, \dd S \]
is continuous in $\R^d$ and smooth in $\R^d \setminus \partial B_0$ ---recall that $u^\pm_{\rm in}(\centerdot, y = \Phi^\pm_\lambda (y - \centerdot))$. Moreover, $u^\pm$ solves the problem \eqref{pb:extension} because $u^\pm_{\rm sc}(\centerdot,y)$ is smooth in $\R^d \setminus \supp V$ for $y \in \R^d \setminus B_0$ (Theorem 11.1.1 in \cite{zbMATH02123716}) and solves the problem \eqref{pb:scattering}, and $v^\pm$ solves the problem
\begin{equation*}
\left\{
\begin{aligned}
& (\Delta + \lambda) v^\pm = 0 & & \textsl{in}\, \R^d \setminus \partial B_0,\\
& \partial_\nu v^\pm|_{\R^d \setminus \overline{B_0}} - \partial_\nu v^\pm|_{B_0} = f & & \textsl{on}\, \partial B_0,\\
& \sup_{|x|=R} \Big| \frac{x}{|x|} \cdot \nabla v^\pm (x) \mp i \lambda^{1/2} v^\pm (x)  \Big| = o(R^{-\frac{d-1}{2}})  & & \textsl{for}\, R\geq R_0,
\end{aligned}
\right.
\end{equation*}
---again the fact that $v^\pm$ solves this problem is classical (see for example \cite{zbMATH06061716}).
\end{proof}

\begin{proposition}\label{prop:int_identity} \sl Consider $d\geq 3$. Let $V_1$ and $V_2$ be two electric potentials as in the theorem \ref{th:uniqueness}. Let $ u^\pm_{{\rm sc},1} (\centerdot, y) $ and $ u^\pm_{{\rm sc},2}  (\centerdot, y) $ with $y \in \R^d \setminus B_0 $ be the scattering solutions of the theorem \ref{th:scattering} associated to $V_1$ and $V_2$. If
\[u^\pm_{{\rm sc},1}|_{\partial B_0 \times \partial B_0} = u^\pm_{{\rm sc},2}|_{\partial B_0 \times \partial B_0},\]
then
\[\langle (V_1 - V_2) v_1, v_2 \rangle = 0 \]
for all $v_1$ and $v_2$ in $H^1 (B_0)$ such that $(\Delta + \lambda - V_j) v_j = 0$ in $B_0$.
\end{proposition}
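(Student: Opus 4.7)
The plan is to adapt the H\"ahner--Hohage strategy: transfer the equality of scattering data on $\partial B_0 \times \partial B_0$ into equality of interior Cauchy data on $\partial B_0$ via the single-layer potentials of the lemma \ref{lem:kernel_solution}, and then conclude through Green's identity. For $f \in C(\partial B_0)$ set
\[u^\pm_{j,f}(x) = \int_{\partial B_0} u^\pm_{{\rm to}, j}(x, y) f(y) \, \dd S(y) \qquad (j = 1, 2).\]
Since $u^\pm_{{\rm to}, j} = u^\pm_{\rm in} + u^\pm_{{\rm sc}, j}$ and the incident wave is common to both potentials, the hypothesis gives $u^\pm_{1, f}|_{\partial B_0} = u^\pm_{2, f}|_{\partial B_0}$. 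In $\R^d \setminus \overline{B_0}$ both functions solve $(\Delta + \lambda) w = 0$, satisfy the SRC, and share Dirichlet data on $\partial B_0$; uniqueness for the exterior Helmholtz problem then yields $u^\pm_{1, f} \equiv u^\pm_{2, f}$ in $\R^d \setminus \overline{B_0}$. Taking exterior normal traces on $\partial B_0$ and using the common jump $f$ supplied by the lemma \ref{lem:kernel_solution}, we also obtain $\partial_\nu u^\pm_{1, f}|_{B_0} = \partial_\nu u^\pm_{2, f}|_{B_0}$ on $\partial B_0$, so $u^\pm_{1, f}|_{B_0}$ and $u^\pm_{2, f}|_{B_0}$ share full interior Cauchy data.

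With $v_1 = u^\pm_{1, f}|_{B_0}$ and $v_2 = u^\pm_{2, g}|_{B_0}$, both functions lie in $H^1(B_0)$ and satisfy $(\Delta + \lambda - V_j) v_j = 0$ there. The distributional pairing with $V_j$ is meaningful: its $L^{d/2}$ component pairs against $L^{p_d} \supset H^1$, and its $\delta$-shell component is integrated against the $L^2$-trace of an $H^1$-function on the Lipschitz hypersurface $\Gamma_j$. A generalised Green's identity in $B_0$, combined with the equations $\Delta v_j = V_j v_j - \lambda v_j$ and the symmetry of the pairing for real-valued potentials, gives
\[\langle (V_1 - V_2) v_1, v_2 \rangle_{B_0} = \int_{\partial B_0} \bigl( v_2 \, \partial_\nu v_1|_{B_0} - v_1 \, \partial_\nu v_2|_{B_0} \bigr) \, \dd S.\]
By the previous paragraph $v_1$ has the same Cauchy data as $u^\pm_{2, f}|_{B_0}$ on $\partial B_0$, so substituting the latter in the boundary integral produces the Green form applied to two $V_2$-solutions in $B_0$, which vanishes by the same symmetry. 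Hence $\langle (V_1 - V_2) u^\pm_{1, f}|_{B_0}, u^\pm_{2, g}|_{B_0} \rangle_{B_0} = 0$ for all $f, g \in C(\partial B_0)$.

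It remains to pass from this restricted family of solutions to arbitrary $v_j \in H^1(B_0)$ with $(\Delta + \lambda - V_j) v_j = 0$. I would extend each $v_j$ to $\tilde v_j \in H^1_{\rm loc}(\R^d)$ by solving the exterior Dirichlet problem for $(\Delta + \lambda) w = 0$ in $\R^d \setminus \overline{B_0}$ with trace $v_j|_{\partial B_0}$ and the matching SRC, and identify $\tilde v_j$ with $u^\pm_{j, f_j}$, where $f_j \in H^{-1/2}(\partial B_0)$ is the jump of the normal derivative across $\partial B_0$; after extending the lemma \ref{lem:kernel_solution} from continuous to $H^{-1/2}$-densities, this reduces the general case to the previous step. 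Equivalently, one may approximate an arbitrary interior solution in $H^1(B_0)$ by those of single-layer form and pass to the limit, exploiting the continuity of $(v_1, v_2) \mapsto \langle (V_1 - V_2) v_1, v_2 \rangle$ on $H^1(B_0) \times H^1(B_0)$ granted by the embeddings above. The main obstacle is precisely this last step, a Runge-type density statement that ultimately hinges on the unique continuation already available through the lemma \ref{lem:UniCont} and the Caro--Rogers Carleman estimate.
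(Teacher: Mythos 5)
This is essentially the paper's proof: the H\"ahner--Hohage transfer of the boundary data through the single-layer potentials of the lemma \ref{lem:kernel_solution}, closed by Green's identity. Your first two paragraphs are correct, with one small (and pleasant) variation: you deduce from exterior uniqueness and the common jump that $u^\pm_{1,f}$ and $u^\pm_{2,f}$ have the same interior Cauchy data and then apply Green's identity to two $V_2$-solutions, whereas the paper keeps both densities and instead invokes the symmetry of $\mathcal{S}_j^\pm$ from the lemma \ref{lem:single_layer} to land on the pairing of $\mathcal{S}_1^\pm-\mathcal{S}_2^\pm$ against the two jump densities; the two computations are interchangeable.

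The step you single out as the main obstacle is exactly what the paper does, and it is not treated as an obstacle there: given arbitrary $v_j\in H^1(B_0)$ with $(\Delta+\lambda-V_j)v_j=0$, the paper solves the exterior Dirichlet problem with data $v_j|_{\partial B_0}$ and the SRC to obtain $w_j^\pm$, forms $u_j^\pm=\mathbf{1}_{\overline{B_0}}v_j+\mathbf{1}_{\R^d\setminus\overline{B_0}}w_j^\pm$, and identifies it ---via the uniqueness half of the lemma \ref{lem:kernel_solution}--- with the single-layer potential of density $f_j=\partial_\nu w_j^\pm|_{\R^d\setminus\overline{B_0}}-\partial_\nu v_j|_{B_0}$, so that $v_j|_{\partial B_0}=\mathcal{S}_j^\pm f_j$. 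No Runge-type density statement is needed, and unique continuation enters only through the well-posedness of the transmission problem in the lemma \ref{lem:kernel_solution} (itself resting on the lemma \ref{lem:UniCont}), not through any approximation. Your worry about the regularity of $f_j$ is the one point of substance: the paper extends $v_j$ ``by continuity'' to $\partial B_0$ and applies the lemma as if $f_j$ were continuous, whereas for a general $H^1(B_0)$ solution the jump is only guaranteed to lie in $H^{-1/2}(\partial B_0)$. Your first proposed fix ---running the lemma \ref{lem:kernel_solution} for $H^{-1/2}$ densities, which is routine since the layer potential and the uniqueness argument there only use $H^1_{\rm loc}$ mapping properties--- is the right way to make this step airtight; the approximation alternative is unnecessary.
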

\begin{proof}
By Theorem 11.1.1 in \cite{zbMATH02123716}, we know that the 
restriction of $v_j$ to $B_0 \setminus \supp V_j$ is smooth. We 
extend $v_j$ up to $\partial B_0$ by continuity. Let $w_j^\pm$ be
the solution of $(\Delta + \lambda)w_j^\pm = 0$ in
$\R^d \setminus \overline{B_0}$ satisfying the corresponding SRC and 
the Dirichlet boundary condition
$w_j^\pm|_{\partial B_0} = v_j|_{\partial B_0}$. The solution $w^\pm_j$ is continuous in $\R^d \setminus B_0$ and smooth in $\R^d \setminus \overline{B_0}$ (see Theorem 3.11 in 
\cite{zbMATH06061716}).
Then, the function 
\[u_j^\pm = \mathbf{1}_{\overline{B_0}} v_j + \mathbf{1}_{\R^d\setminus \overline{B_0}} w_j^\pm \in L^1_{\rm loc} (\R^d) \]
---$\mathbf{1}_{\overline{B_0}}$ and $\mathbf{1}_{\R^d\setminus \overline{B_0}}$ stand for the characteristic functions of $\overline{B_0}$ and its complement--- and, by the lemma \ref{lem:kernel_solution}, satisfies that
\[u_j^\pm (x) = \int_{\partial B_0} u^\pm_{{\rm to},j}(x,\centerdot)
\big(\partial_\nu w_j^\pm - \partial_\nu v_j\big)  \, \dd S,\]
where $\nu$ is the unit exterior normal vector on $\partial B_0$. In particular,
\begin{equation}
v_j|_{\partial B_0} = \mathcal{S}_j^\pm \big(\partial_\nu w_j^\pm - \partial_\nu v_j\big).
\label{id:solution_singleL}
\end{equation}
Note that, integrating by parts in $B_0$ we have that
\[ \langle (V_1 - V_2) v_1, v_2 \rangle = \int_{\partial B_0} \big( v_2\partial_\nu v_1 - v_1 \partial_\nu v_2 \big) \dd S;\]
while integrating by parts in $B \setminus \overline{B_0}$, where $B = \{ x \in \R^d : |x| < R \}$, and making $R$ go to infinity we have that
\begin{equation}
\int_{\partial B_0} \big( v_2\partial_\nu w_1^\pm - v_1 \partial_\nu w_2^\pm \big) \dd S = 0
\label{id:trivial}
\end{equation}
by the SRC. Then, by the identity \eqref{id:trivial} first and then 
by \eqref{id:solution_singleL}, we 
have that
\begin{align*}
\langle (V_1 - V_2) v_1, v_2 \rangle = - \int_{\partial B_0} \big[ & v_2 \big(\partial_\nu w_1^\pm - \partial_\nu v_1\big) - v_1 \big(\partial_\nu w_2^\pm - \partial_\nu v_2\big) \big] \dd S \\
=  - \int_{\partial B_0} \big[& \mathcal{S}_2^\pm \big(\partial_\nu w_2^\pm - \partial_\nu v_2\big) \big(\partial_\nu w_1^\pm - \partial_\nu v_1\big) \\
& - \mathcal{S}_1^\pm \big(\partial_\nu w_1^\pm - \partial_\nu v_1\big) \big(\partial_\nu w_2^\pm - \partial_\nu v_2\big) \big] \dd S.
\end{align*}
By the symmetry of $\mathcal{S}_j^\pm$ stated in the lemma \ref{lem:single_layer}, we have
\[\langle (V_1 - V_2) v_1, v_2 \rangle = \int_{\partial B_0} \big[\mathcal{S}_1^\pm - \mathcal{S}_2^\pm\big] \big(\partial_\nu w_2^\pm - \partial_\nu v_2\big) \big(\partial_\nu w_1^\pm - \partial_\nu v_1\big) \dd S. \]
Thus, if $u^\pm_{{\rm sc},1}|_{\partial B_0 \times \partial B_0} = u^\pm_{{\rm sc},2}|_{\partial B_0 \times \partial B_0}$ the kernel of the operator $\mathcal{S}_1^\pm - \mathcal{S}_2^\pm$ is zero, and consequently, $\langle (V_1 - V_2) v_1, v_2 \rangle = 0$.
\end{proof}

As we mentioned in the introduction, we will test the identity of the proposition \ref{prop:int_identity} with a family of CGO solutions of the form
\begin{equation}
v_j(x) = e^{\zeta_j \cdot x} (1 + w_j(x)),
\label{id:CGO}
\end{equation}
where $\zeta_j \in \C^d$ so that $\zeta_j \cdot \zeta_j = - \lambda$ 
and $\zeta_1 + \zeta_2 = -i\kappa$ for an arbitrarily given
$\kappa \in \R^d$, and the correction term $w_j$ vanishing in a 
specific sense when $|\zeta_j|$ grows. In order to state the existence of this type of solutions, we will need to introduce some spaces in the spirit of Haberman and Tataru in \cite{zbMATH06145493}, and Caro and Rogers in \cite{zbMATH06534426}. First we introduce the non-homogeneous Bourgain space $X^s_\zeta$ with $s \in \R$, which consists of $u \in \mathcal{S}^\prime (\R^d)$ so that $\widehat{u} \in L^2_{\rm loc}(\R^d)$ and
\[ \| u \|_{X^s_\zeta}^2 = \int_{\R^d} (|\zeta| + |p_\zeta(\xi)|)^{2s} |\widehat{u}(\xi)|^2 \, \dd \xi < \infty, \]
endowed with the norm $\| \centerdot \|_{X^s_\zeta}$. Here $p_\zeta (\xi) = - |\xi|^2 + i 2 \zeta \cdot \xi$ for $\xi \in \R^d$. Then,
for $s\geq 0$, we introduce the space
\[X^s_\zeta (B_0) = \{ u|_{B_0} : u \in X^s_\zeta \}\]
endowed with the norm
\[\| u \|_{X^s_\zeta (B_0)} = \inf \{ \| v \|_{X^s_\zeta} : v|_{B_0} = u \}. \]
For us, the only relevant indices will be $s = 1/2$ and $s= -1/2$. In 
addition to these spaces, there is another family of spaces that will be 
useful for us. This is given, for $s \in \R$, by the set
\[X^s_{\zeta, c}(B_0) = \{ u \in X^s_\zeta : \supp u \subset \overline{B_0} \},\]
endowed with the same norm $\| \centerdot \|_{X^s_\zeta}$. As it was stated 
in  \cite{zbMATH06534426}, $X^{-s}_{\zeta, c}(B_0)$ can be identified with dual space of $X^s_\zeta (B_0)$ for $s\geq 0$.

\begin{proposition}\label{prop:CGO}\sl Consider $d \geq 3$ and $\tau_0$ as in the lemma \ref{lem:a-priori}. For every $\zeta = \Re \zeta + i \Im \zeta \in \C^d $ such that $|\Re \zeta| = \tau$, $|\Im \zeta| = (\tau^2 + \lambda)^{1/2}$ and $\Re \zeta \cdot \Im \zeta = 0$ with $\tau \geq \tau_0$, we have that there exists $w_\zeta \in X^{1/2}_\zeta(B_0)$ so that $v_\zeta = e^{\zeta \cdot x} (1 + w_\zeta)$ is solution of the equation $(\Delta + \lambda - V)v_\zeta = 0$ in $B_0$ and
\[\| w_\zeta \|_{X^{1/2}_\zeta (B_0)} \lesssim \| V \|_{X^{-1/2}_\zeta}.\]
\end{proposition}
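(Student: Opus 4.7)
The ansatz $v_\zeta = e^{\zeta\cdot x}(1+w_\zeta)$ together with $\zeta\cdot\zeta = -\lambda$ turns the equation $(\Delta+\lambda-V)v_\zeta=0$ in $B_0$ into
\[(\Delta + 2\zeta\cdot\nabla - V)w_\zeta = V \qquad \textnormal{in}\, B_0.\]
Writing $G_\zeta := (\Delta+2\zeta\cdot\nabla)^{-1}$, which plays here the role that $(\Delta+\lambda\pm i0)^{-1}$ plays in section \ref{sec:scattering} and is essentially an isometry from $X^{-1/2}_\zeta(B_0)$ onto $X^{1/2}_\zeta(B_0)$ in the spirit of \eqref{id:isometry}, the natural reformulation in $X^{1/2}_\zeta(B_0)$ reads $(I - G_\zeta V)w_\zeta = G_\zeta V$. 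Splitting $V = V^0 + \alpha\,\dd\sigma$, my plan is to solve this by the same two-step scheme that underpins theorem \ref{th:scattering}: a Neumann series to invert the $V^0$-perturbation, followed by the Fredholm alternative for the remaining $\delta$-shell contribution.

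\textbf{Neumann series for $V^0$.} I would mimic corollary \ref{cor:iteration}. Haberman's embedding $\|f\|_{L^{p_d}} \lesssim \|f\|_{X^{1/2}_\zeta}$ recalled in section \ref{sec:others}, the localization bound $\tau^{1/2}\|\chi f\|_{L^2} \lesssim \|f\|_{X^{1/2}_\zeta}$, and the splitting $V^0 = \mathbf{1}_{|V^0|\leq N}V^0 + \mathbf{1}_{|V^0|>N}V^0$ (treated by H\"older with exponent triples $(\infty,2,2)$ and $(d/2,p_d,p_d)$) together yield
\[\|G_\zeta V^0\|_{\mathcal{L}(X^{1/2}_\zeta(B_0))} \lesssim N\tau^{-1} + \|\mathbf{1}_{|V^0|>N}V^0\|_{L^{d/2}}.\]
Choosing first $N$ large and then $\tau_0 = \tau_0(d, V^0, R_0)$ large produces a strict contraction for every $\tau\geq\tau_0$, hence by Neumann series a bounded resolvent $R_\zeta:X^{-1/2}_\zeta(B_0)\to X^{1/2}_\zeta(B_0)$ for $\Delta + 2\zeta\cdot\nabla - V^0$, in perfect analogy with proposition \ref{prop:resolventV0}.

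\textbf{Fredholm alternative for $\alpha\,\dd\sigma$.} With $R_\zeta$ at hand, the equation becomes $(I - R_\zeta\circ(\alpha\,\dd\sigma))w_\zeta = R_\zeta V$ in $X^{1/2}_\zeta(B_0)$. Compactness of $R_\zeta\circ(\alpha\,\dd\sigma)$ is obtained as in section \ref{sec:scattering} by factoring multiplication by $\alpha\,\dd\sigma$ through a Schwartz cutoff $\chi$, the trace $\dot{B}^{1/2}_{2,1}(\R^d)\to L^2(\Gamma)$ and multiplication by $\alpha/\chi\,\dd\sigma$, together with the Rellich-type compactness of multiplication by $\chi$ from $X^{1/2}_\zeta(B_0)$ into $\dot{B}^{1/2}_{2,1}(\R^d)$. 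For injectivity, any $w$ in the kernel makes $v:=e^{\zeta\cdot x}(1+w)-e^{\zeta\cdot x}(1)$, more precisely $v := e^{\zeta\cdot x}w\in H^1(B_0)$, an $H^1$-solution of $(\Delta+\lambda-V)v=0$ in $B_0$. After rotating coordinates via $T\in SO(d)$ so that $\Re\zeta=\tau Te_d$ (which is exactly what accounts for the rotated potential $T^\ast V$ in lemma \ref{lem:a-priori}) and absorbing $\Im\zeta$ through the conjugation $\Delta + 2\zeta\cdot\nabla = e^{-i\Im\zeta\cdot x}(\Delta+2\Re\zeta\cdot\nabla+|\Re\zeta|^2)e^{i\Im\zeta\cdot x}$, lemma \ref{lem:a-priori} forces $v\equiv 0$ and hence $w\equiv 0$. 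Fredholm then yields a bounded inverse on $X^{1/2}_\zeta(B_0)$, and composing with $R_\zeta$ and $G_\zeta$ delivers the required estimate $\|w_\zeta\|_{X^{1/2}_\zeta(B_0)} \lesssim \|V\|_{X^{-1/2}_\zeta}$.

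\textbf{Main obstacle.} The delicate step is the injectivity, which amounts to reconciling the Caro--Rogers Carleman estimate (stated for the real weight $\varphi(x)=\tau x_d + Mx_d^2/2$ and the norm $Y^{1/2}_{\tau,M}$) with the complex $\zeta$-adapted geometry: I must perform the $SO(d)$ rotation, control the extra complex phase generated by $\Im\zeta$, pass from the homogeneous multiplier $p_\zeta$ to the non-homogeneous weight $|\zeta|+|p_\zeta|$ defining $X^{1/2}_\zeta(B_0)$, and extend the inequality from Schwartz functions to $H^1(B_0)$-elements by density, exactly as was done to pass from \eqref{es:a-priori} to \eqref{in:Carleman} in the proof of lemma \ref{lem:UniCont}. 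Once this bookkeeping is settled, both existence and the quantitative bound follow from the Fredholm inversion combined with the isometry of $G_\zeta$.
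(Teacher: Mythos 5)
Your overall scheme (Neumann series for $V^0$, then Fredholm for $\alpha\,\dd\sigma$) is not the one the paper uses, and it has a genuine gap at the injectivity step. The paper instead derives from the lemma \ref{lem:a-priori} the a priori estimate $\| u \|_{X^{1/2}_\zeta} \lesssim \| (\Delta + 2 \zeta \cdot \nabla - V)u \|_{X^{-1/2}_\zeta}$ for all $u \in \mathcal{S}(\R^d)$ supported in $B_0$, and then invokes the \emph{method of a priori estimates} (a Hahn--Banach duality argument, as in Proposition 2.5 of Caro--Rogers): the estimate for the (transpose of the) full operator yields existence of $w_\zeta$ \emph{and} the bound $\|w_\zeta\|_{X^{1/2}_\zeta(B_0)} \lesssim \|V\|_{X^{-1/2}_\zeta}$ in one stroke, with a constant depending only on $R_0$ and $\|\alpha\|_{L^\infty(\Gamma)}$. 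No inversion of $I - R_\zeta\circ(\alpha\,\dd\sigma)$ is needed, and no uniqueness statement is required.

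The gap in your argument is this: a kernel element $w$ of $I - R_\zeta\circ(\alpha\,\dd\sigma)$ satisfies $w = R_\zeta((\alpha\,\dd\sigma)w)$ and therefore extends to a global solution of $(\Delta + 2\zeta\cdot\nabla - V)\tilde w = 0$ in $\R^d$ that is \emph{not} supported in $\overline{B_0}$. The Carleman estimate \eqref{in:CR}/\eqref{in:Carleman} and the lemma \ref{lem:a-priori} apply only to functions supported in $\overline{B_0}$; in the scattering setting of section \ref{sec:scattering} that support was earned through the Sommerfeld radiation condition and Rellich's lemma, and there is no analogue of either for the conjugated Laplacian, whose characteristic variety $\{p_\zeta = 0\}$ carries a large kernel. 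Your stronger claim --- that the lemma \ref{lem:a-priori} forces any $H^1(B_0)$ solution of $(\Delta+\lambda-V)v=0$ \emph{in} $B_0$ to vanish --- cannot be correct, since it would annihilate the CGO solutions themselves. A further problem is quantitative: even if invertibility held, the Fredholm alternative gives no control on the norm of the inverse, whereas the stated bound must be uniform in $\zeta$ for $\tau \geq \tau_0$ in order to be compatible with the averaging in the lemma \ref{lem:averaging}. (A smaller inaccuracy: $(\Delta+2\zeta\cdot\nabla)^{-1}$ is an isometry only for the homogeneous weights $|p_\zeta|^{\pm 1/2}$, as in \eqref{id:isometry}; it is not bounded from $X^{-1/2}_\zeta$ to $X^{1/2}_\zeta$ with the non-homogeneous weight $|\zeta|+|p_\zeta|$, which degenerates differently near $\{p_\zeta=0\}$.)
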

\begin{proof}
The lemma \ref{lem:a-priori} is the analogue of Lemma 2.1 in \cite{zbMATH06534426}. Then, considering $T \in SO(d)$ so that $\Re \zeta = \tau T e_d$ and arguing as in Lemma 2.2, Lemma 2.3 and Proposition 2.4 in \cite{zbMATH06534426} we can derive the following inequality:
\[\| u \|_{X^{1/2}_\zeta} \lesssim \| (\Delta + 2 \zeta \cdot \nabla - V)u \|_{X^{-1/2}_\zeta}\]
for all $u \in \mathcal{S}(\R^d)$ with $\supp u \subset B_0$ and $\tau > \tau_0$. The implicit constant in the previous inequality depends on $R_0$ and $\| \alpha \|_{L^\infty(\Gamma)}$, while $\tau_0$ is as in the lemma \ref{lem:a-priori}.

This inequality is the analogue of the one stated in Proposition 2.4 from \cite{zbMATH06534426}, and it represents the key ingredient to perform the method of \textit{a priori estimates} which yields the existence of $w_\zeta$ and its corresponding bound by the norm of $V$. For the details, see the pages 11 and 12, and Proposition 2.5 in \cite{zbMATH06534426}. 
\end{proof}

The proposition \ref{prop:CGO} yields directly pairs of solutions as 
in \eqref{id:CGO}, however, we also need that these pairs satisfy
$\zeta_1 + \zeta_2 = -i\kappa$ for an arbitrarily given $\kappa \in \R^d$. 
Thus, let $\kappa \in \R^d$ be given and choose $\eta, \theta \in \R^d$ so 
that $|\eta| = |\theta| = 1$ and
$\eta \cdot \theta = \eta \cdot \kappa = \theta \cdot \kappa = 0$. Then, for 
$\tau $ such that $\tau^2 \geq |\kappa|^2/4 - \lambda$ we set
\begin{equation}
\begin{aligned}
\zeta_1 & = \tau \eta + i \Big[ -\frac{\kappa}{2} + \Big( \tau^2 + \lambda - \frac{|\kappa|^2}{4} \Big)^{1/2} \theta \Big], \\
\zeta_2 & = - \tau \eta + i \Big[ -\frac{\kappa}{2} - \Big( \tau^2 + \lambda - \frac{|\kappa|^2}{4} \Big)^{1/2} \theta \Big].
\end{aligned}
\label{id:zeta}
\end{equation}
Since $\zeta_1$ and $\zeta_2$ satisfy the conditions of the proposition 
\ref{prop:CGO}, there exists solutions $v_1$ and $v_2$ as in 
\eqref{id:CGO} solving the equation $(\Delta + \lambda - V_j)v_j = 0$ 
in $B_0$ with $w_j \in X^{1/2}_{\zeta_j}(B_0)$ such that
\begin{equation}
\| w_j \|_{X^{1/2}_{\zeta_j} (B_0)} \lesssim \| V_j \|_{X^{-1/2}_{\zeta_j}}
\label{in:reminder}
\end{equation}
for $j = 1,2$ and $\tau^2 \geq \max\{\tau_0^2, |\kappa|^2/4 - \lambda\}$.
Considering any extension of 
$w_j$ in $X^{1/2}_{\zeta_j}(B_0)$ to $X^{1/2}_{\zeta_j}$, one can check that this extension is in $H^1(\R^d)$ and consequently, $w_j$ belongs to
$H^1(B_0)$ and so does $v_j$. Therefore, the solutions $v_1$ and $v_2$ can be 
plugged in to the identity of the proposition \ref{prop:int_identity}, and 
obtain that
\[\langle V_1 - V_2, e^{-i\kappa\cdot x} \rangle = - \langle V_1 - V_2 , e^{-i\kappa\cdot x} w_1 \rangle - \langle V_1 - V_2 , e^{-i\kappa\cdot x} w_2 \rangle - \langle (V_1 - V_2) w_1 , e^{-i\kappa\cdot x} w_2 \rangle.\]
The first two terms on the right-hand side can be bounded as follows
\[| \langle V_1 - V_2 , e^{-i\kappa\cdot x} w_j \rangle | \leq \| V_1 - V_2 \|_{X^{-1/2}_{\zeta_j}} \| e^{-i\kappa\cdot x} w_j \|_{X^{1/2}_{\zeta_j}(B_0)},\]
since $\supp (V_1 - V_2) \subset B_0$ and because of the duality between 
$X^{-1/2}_{\zeta_j, c} (B_0)$ and $X^{1/2}_{\zeta_j}(B_0)$. One can check 
that
\begin{equation}
\| e^{-i\kappa\cdot x} w_j \|_{X^{1/2}_{\zeta_j}(B_0)} \lesssim (1 + |\kappa|) \| w_j \|_{X^{1/2}_{\zeta_j}(B_0)}
\label{in:modulation}
\end{equation}
and consequently, by \eqref{in:modulation} and \eqref{in:reminder}, one obtains
\begin{equation}
| \langle V_1 - V_2 , e^{-i\kappa\cdot x} w_j \rangle | \lesssim (1 + |\kappa|) \| V_1 - V_2 \|_{X^{-1/2}_{\zeta_j}} \| V_j \|_{X^{-1/2}_{\zeta_j}}.
\label{in:1st2nd}
\end{equation}
On the other hand, the third term can be bounded, by duality, as follows
\[| \langle (V_1 - V_2) w_1 , e^{-i\kappa\cdot x} w_2 \rangle | \leq \| (V_1 - V_2) w_1 \|_{X^{-1/2}_{\zeta_2}} \| e^{-i\kappa\cdot x} w_2 \|_{X^{1/2}_{\zeta_2}(B_0)} \]
since again $(V_1 - V_2) w_1$ is supported in $B_0$. We will show that the operator multiplication by $V_1 - V_2$ is a bounded from $X^{1/2}_{\zeta_1}(B_0)$ to $X^{-1/2}_{\zeta_2}$. For the time being, let us assume that such boundedness holds. Then, we have by \eqref{in:modulation} and \eqref{in:reminder} that
\begin{equation}
\begin{aligned}
| \langle (V_1 - V_2) w_1 , e^{-i\kappa\cdot x} w_2 \rangle |
& \lesssim (1 + |\kappa|) \|w_1 \|_{X^{1/2}_{\zeta_1}(B_0)} \|w_2 \|_{X^{1/2}_{\zeta_2}(B_0)} \\
& \lesssim (1 + |\kappa|) \|V_1 \|_{X^{-1/2}_{\zeta_1}} \|V_2 \|_{X^{-1/2}_{\zeta_2}}.
\end{aligned}
\label{in:3rd}
\end{equation}
Gathering the inequalities \eqref{in:1st2nd} and \eqref{in:3rd}, one obtains the following bound
\begin{equation}
\big| \langle V_1 - V_2, e^{-i\kappa\cdot x} \rangle \big| \lesssim
(1 + |\kappa|) \sum_{j,k=1}^2 \| V_j \|_{X^{-1/2}_{\zeta_k}} \sum_{l,m=1}^2 \| V_l \|_{X^{-1/2}_{\zeta_m}}.
\label{in:Fourier}
\end{equation}

Before going on, prove the boundedness of the operator multiplication by $V_1 - V_2$ from $X^{1/2}_{\zeta_1}(B_0)$ to $X^{-1/2}_{\zeta_2}$. To do so, let $V$ denote a potential of the form \eqref{id:V}, consider $w \in X^{1/2}_{\zeta_1}(B_0)$, and show that there exists a positive
$C = C(d,\Gamma, \alpha, V^0)$ such that
\begin{equation}
\| V w \|_{X^{-1/2}_{\zeta_2}} \leq C \| w \|_{X^{1/2}_{\zeta_1}(B_0)}.
\label{in:boundednessV}
\end{equation}
We will prove this boundedness by duality. Let $u \in X^{1/2}_{\zeta_1}$ denote an arbitrary extension of $w \in X^{1/2}_{\zeta_1} (B_0)$ and note that
\[\langle Vw, \phi \rangle = \int_{\R^d} V^0 u\phi \, + \int_\Gamma \alpha u\phi \, \dd\sigma.  \]
The first of these terms on the right-hand side can be easily bounded using H\"older's inequality and Haberman's embedding (see\cite{zbMATH06490961})
\[\Big| \int_{\R^d} V^0 u\phi \, \Big| \leq \| V^0 \|_{L^{d/2}} \| u \|_{L^{p_d}} \| \phi \|_{L^{p_d}} \lesssim 
\| V^0 \|_{L^{d/2}} \| u \|_{X^{1/2}_{\zeta_1}} \| \phi \|_{X^{1/2}_{\zeta_2}}. \]
The second term, can be rewritten as follows
\[ \int_\Gamma \alpha u\phi \, \dd\sigma = \int_{T^t\Gamma} \alpha_{\zeta_1+\zeta_2} u_{\zeta_1} \phi_{\zeta_2} \, \dd T^\ast \sigma \]
with
\begin{align*}
& \alpha_{\zeta_1+\zeta_2} (y) = e^{-i\Im (\zeta_1 + \zeta_2) \cdot Ty} \alpha(Ty), \\
& u_{\zeta_1}(y) = e^{i\Im \zeta_1 \cdot Ty} u(Ty),\\
& \phi_{\zeta_2}(y) = e^{i\Im \zeta_2 \cdot Ty} \phi(Ty),
\end{align*}
where $T \in SO(d)$ is so that $\Re \zeta_1 = \tau T e_d = -\Re\zeta_2$, $T^t \Gamma = \{ T^t x : x \in \Gamma \}$ and $T^\ast \sigma (E) = \sigma (TE)$ with $TE = \{ Tx : x \in E \}$. Thus, the Cauchy--Schwarz inequality, the theorem \ref{th:trace} and the lemma \ref{lem:Yast-Bourgain_spaces} ---in the sections \ref{sec:trace} and \ref{sec:others}--- imply that
\[\Big| \int_\Gamma \alpha u\phi \, \dd\sigma \Big| \lesssim
\| \alpha_\zeta \|_{L^\infty(T^t \Gamma)} \| u_\zeta \|_{\dot{Y}^{1/2}_\tau} \| \phi_\zeta \|_{\dot{Y}^{1/2}_\tau}. \]
Since $\widehat{u_{\zeta_1}} (\xi) = \widehat{u}(T\xi - \Im \zeta_1) $, $\widehat{\phi_{\zeta_2}} (\xi) = \widehat{\phi}(T\xi - \Im \zeta_2) $ and 
$|q_\tau (\xi)| = |p_{\zeta_j} (T\xi - \Im\zeta_j)|$ with $j = 1,2$, we have that
\begin{equation*}
\Big| \int_\Gamma \alpha u\phi \, \dd\sigma \Big| \lesssim
\| \alpha \|_{L^\infty(\Gamma)} \| |p_{\zeta_1}|^{1/2} \widehat{u} \|_{L^2} \| |p_{\zeta_2}|^{1/2} \widehat{\phi} \|_{L^2} \leq \| \alpha \|_{L^\infty(\Gamma)} \| u \|_{X^{1/2}_{\zeta_1}} \| \phi \|_{X^{1/2}_{\zeta_2}}.
\end{equation*}
Gathering the inequalities for $V^0$ and $\alpha \, \dd \sigma$, we obtain that
\[\langle Vw, \phi \rangle \lesssim (\| V^0 \|_{L^{d/2}} + \| \alpha \|_{L^\infty(\Gamma)}) \| u \|_{X^{1/2}_{\zeta_1}} \| \phi \|_{X^{1/2}_{\zeta_2}}, \]
and, consequently that
\[ \| Vw \|_{X^{-1/2}_{\zeta_2}} \lesssim (\| V^0 \|_{L^{d/2}} + \| \alpha \|_{L^\infty(\Gamma)}) \| u \|_{X^{1/2}_{\zeta_1}} \]
where $u$ is an arbitrary extension of $w$. Taking the infimum, between the norm of all the possible extensions of $w$, we get the inequality \eqref{in:boundednessV}.

We now go back to the inequality \eqref{in:Fourier}. Our aim is to 
show that its right-hand side tends to zero in some sense as $\tau$ 
in \eqref{id:zeta} goes to infinity. Due to the $\delta$-shell parts of $V_1$ and $V_2$, this decay will be possible in average as Haberman and Tataru showed for the Calder\'on problem in \cite{zbMATH06145493}.

\begin{lemma}\label{lem:averaging} \sl Let $V$ be a potential of the form of 
\eqref{id:V}. If $\zeta = \zeta(\tau, T) \in \C^d$ is so that
$\zeta \cdot \zeta = -\lambda$ with $ \Re \zeta = \tau T e_1$, then 
for every $s < 1/2$, we have that
\[ \frac{1}{M} \int_M^{2M} \int_{SO(d)} \| V \|^2_{X^{-1/2}_\zeta} \, \dd \mu(T) \, \dd \tau \lesssim
\left\{
\begin{aligned}
M^{-\frac{1}{2}} \| V^0 \|^2_{L^{d/2}} + M^{-s} \| \alpha \|^2_{L^\infty(\Gamma)} & & d = 3 \\
M^{-1} \| V^0 \|^2_{L^{d/2}} + M^{-s} \| \alpha \|^2_{L^\infty(\Gamma)} & & d \geq 4
\end{aligned}
\right.
\]
where the implicit constant depends on $R_0$, $\Gamma$ and $d$. The measure $\mu$ denotes the Haar measure on $SO(d)$.
\end{lemma}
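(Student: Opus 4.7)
The plan is to split $V = V^0 + \alpha\,\dd\sigma$ and use the triangle inequality to reduce to estimating each term separately, since
\[
\|V\|_{X^{-1/2}_\zeta}^2 \leq 2\|V^0\|_{X^{-1/2}_\zeta}^2 + 2\|\alpha\,\dd\sigma\|_{X^{-1/2}_\zeta}^2.
\]
For the critically-singular part $V^0 \in L^{d/2}$ with compact support, the claimed bounds ($M^{-1/2}$ in $d=3$ and $M^{-1}$ in $d \geq 4$) are exactly Haberman's averaging estimates for $L^{d/2}$ potentials (which extended Haberman--Tataru's original $L^\infty$ estimate from \cite{zbMATH06145493}); they rely on the embedding $L^{p_d} \hookrightarrow \dot{Y}^{1/2}_\tau$, Plancherel, and a direct computation of how the $SO(d)$ average of $1/(|\zeta|+|p_\zeta(\xi)|)$ decays, combined with the cutoff $\supp V^0 \subset B_0$ to handle low frequencies.

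For the $\delta$-shell part, I would unfold the definition and interchange the order of integration,
\[
\frac{1}{M}\int_M^{2M}\!\!\int_{SO(d)} \|\alpha\,\dd\sigma\|^2_{X^{-1/2}_\zeta}\,\dd\mu(T)\,\dd\tau
= \int_{\R^d}|\widehat{\alpha\,\dd\sigma}(\xi)|^2\, K_M(\xi)\,\dd\xi,
\]
where $K_M(\xi) = \frac{1}{M}\int_M^{2M}\int_{SO(d)}(|\zeta|+|p_\zeta(\xi)|)^{-1}\,\dd\mu(T)\,\dd\tau$. The heart of the argument is a bound of the form $K_M(\xi) \lesssim M^{-s}\,\omega_s(\xi)$ where $\omega_s$ is integrable against $|\widehat{\alpha\,\dd\sigma}|^2$. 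The rotation average forces the critical set $\{|p_\zeta|\approx 0\}$ to move, and since this set is a $(d-2)$-dimensional submanifold inside a $\tau$-scaled paraboloid, a standard computation of the measure of $\{T : |T e_1\cdot\xi| \leq \epsilon\}$ on $SO(d)$ gives a decay in $M$ paired with a weight decaying like $(1+|\xi|)^{-1-\delta(s)}$ for any $s<1/2$ (the loss $\delta(s)>0$ vanishes as $s\to 1/2$).

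To close the estimate for $\alpha\,\dd\sigma$ I would use that $\Gamma$ is a compact Lipschitz hypersurface with $\alpha\in L^\infty(\Gamma)$, so that by the trace-type inequality (Besov form of Theorem 14.1.1 in \cite{zbMATH02123716} used already in this paper) one has $\alpha\,\dd\sigma \in H^{-(1+\delta)/2}(\R^d)$ for every $\delta>0$, with norm controlled by $\|\alpha\|_{L^\infty(\Gamma)}$ and a constant depending on $\Gamma$. Combining this membership with the kernel bound on $K_M(\xi)$ produces exactly $M^{-s}\|\alpha\|^2_{L^\infty(\Gamma)}$ for any $s<1/2$.

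The main obstacle is the averaged kernel bound for $K_M(\xi)$: one must obtain a decay in $M$ while keeping a weight on $\xi$ slightly better than $(1+|\xi|)^{-1}$, because the natural trace regularity of a surface measure is exactly the borderline space $H^{-1/2}$. This forces the strict inequality $s<1/2$ in the statement and explains the $M^{-s}\|\alpha\|^2_{L^\infty(\Gamma)}$ loss compared with the $V^0$ estimate. The computation itself is a careful spherical integration on $SO(d)$, analogous to the one in \cite{zbMATH06145493} but adapted to the condition $\zeta\cdot\zeta=-\lambda$ rather than $\zeta\cdot\zeta=0$, which only affects the lower-order terms and is absorbed in the implicit constant for $\tau\geq\tau_0$.
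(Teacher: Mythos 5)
Your proposal is correct and follows essentially the same route as the paper: split off $V^0$, and handle $\alpha\,\dd\sigma$ by averaging the kernel $(|\zeta|+|p_\zeta(\xi)|)^{-1}$ over $T\in SO(d)$ and $\tau\in[M,2M]$ (this is precisely the content of Lemma 5.2 in Haberman's paper, which the authors cite as a black box) and then pairing the resulting decay with the fact that $\alpha\,\dd\sigma\in H^{-1/2-\varepsilon}$ by the dual trace theorem, with $\|\alpha\,\dd\sigma\|_{H^{-1/2-\varepsilon}}\lesssim\|\alpha\|_{L^\infty(\Gamma)}$. The only cosmetic difference is that the paper bounds the $V^0$ term pointwise in $(\tau,T)$ with no averaging at all --- via Plancherel and H\"older for $d\geq4$, and via the dual of Haberman's embedding applied to $L^{q_d'}\supset L^{d/2}$ for $d=3$ --- so the averaging machinery is reserved entirely for the $\delta$-shell component.
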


\begin{proof}
Start by estimating the critically singular part of $V$. If $d \geq 4$,
\[ \| V^0 \|_{X^{-1/2}_{\zeta(\tau, T)}}  \leq \tau^{-1/2} \| V^0 \|_{L^2}  \lesssim \tau^{-1/2} \| V^0 \|_{L^{d/2}} \]
since $\supp V^0 \subset B_0$ and $d/2 \geq 2$ for $d \geq 4$. In the case $d=3$, by the dual inequality to Haberman's embedding (see \cite{zbMATH06490961}),
\[\| V^0 \|_{X^{-1/2}_{\zeta(\tau, T)}} \lesssim \tau^{-d(1/p_d^\prime - 1/q_d^\prime)} \| V^0 \|_{L^{q_d^\prime}} \lesssim \tau^{-1/4} \| V^0 \|_{L^{d/2}}\]
since $\supp V^0 \subset B_0$ and $d/2 \geq q_d^\prime$ for $d = 3$. This proves the part of the estimate corresponding to the critically singular component of $V^0$. We focus now on the $\delta$-shell component. By Lemma 5.2 in \cite{zbMATH06490961} we have that
\[\frac{1}{M} \int_M^{2M} \int_{SO(d)} \| V \|^2_{X^{-1/2}_{\zeta(\tau, T)}} \, \dd \mu(T) \, \dd \tau \lesssim M^{-1} \| L(\alpha \, \dd\sigma) \|_{\dot{H}^{-1/2}}^2 + \| H(\alpha \, \dd\sigma) \|_{\dot{H}^{-1}}^2 \]
where $\mathcal{F} H(\alpha \, \dd\sigma) = \mathbf{1}_{|\xi|\geq 2M} \mathcal{F} (\alpha \, \dd\sigma)$ and $ L(\alpha \, \dd\sigma) = \alpha \, \dd\sigma - H(\alpha \, \dd\sigma)$. Thus, for every $\varepsilon \in (0 , 1/2)$,  we have
\[\frac{1}{M} \int_M^{2M} \int_{SO(d)} \| V \|_{X^{-1/2}_{\zeta(\tau, T)}} \, \dd \mu(T) \, \dd \tau \lesssim M^{-1 + 2\varepsilon} \| \alpha \, \dd\sigma \|_{\dot{H}^{-1/2 - \varepsilon}}^2. \]
Since $\supp(\alpha \, \dd\sigma) \subset B_0$, we have
\[\| \alpha \, \dd\sigma \|_{\dot{H}^{-1/2 - \varepsilon}} \lesssim \| \alpha \, \dd\sigma \|_{H^{-1/2 - \varepsilon}}. \]
Using the dual of the usual trace theorem for Sobolev spaces we have that the right-hand side of last inequality is bounded by $ \| \alpha \|_{L^2(\Gamma)}$. Since $\Gamma$ is compact, we have that $\| \alpha \|_{L^2(\Gamma)} \lesssim \| \alpha \|_{L^\infty(\Gamma)} $, which proves the part of the inequality corresponding to the $\delta$-shell component of the potential.
\end{proof}

We will apply this lemma to show that $\langle V_1 - V_2, e^{-i\kappa\cdot x} \rangle = 0$. For that, consider $\zeta_1$ and $\zeta_2$ as in \eqref{id:zeta} with $\kappa/|\kappa| = T_\kappa e_d$, $\eta = T_\kappa S e_1$ and $\theta = T_\kappa S e_2$, where $S \in SO(d)$ such that $S e_d = e_d$. Identifying the set $\{ S \in SO(d) : S e_d = e_d \}$ with $SO(d - 1)$, we have that 
\begin{align*}
\big| \langle V_1 - V_2, & e^{-i\kappa\cdot x} \rangle \big| = \frac{1}{M} \int_M^{2M} \int_{SO(d - 1)} \big| \langle V_1 - V_2, e^{-i\kappa\cdot x} \rangle \big| \, \dd \mu(S) \, \dd \tau \\
& \lesssim
(1 + |\kappa|) \sum_{j,k,l,m = 1}^2 \frac{1}{M} \int_M^{2M} \int_{SO(d - 1)} \| V_j \|_{X^{-1/2}_{\zeta_k}} \| V_l \|_{X^{-1/2}_{\zeta_m}} \, \dd \mu(S) \, \dd \tau,
\end{align*}
where  $\zeta_j = \zeta_j (\tau, S)$.
Applying Cauchy--Schwarz in the integration with respect to $S$ and $\tau$, and the lemma \ref{lem:averaging}, we obtain that
\[\langle V_1 - V_2, e^{-i\kappa\cdot x} \rangle = 0 , \qquad \forall \kappa \in \R^d\]
after making $M$ tend to infinity. By the injectivity of the Fourier transform, we know that $V_1 = V_2$. This proves the theorem \ref{th:uniqueness}.

\section{Well-suited estimates for the resolvent}\label{sec:resolvent}
In this section we prove the lemmas that we used in the section 
\ref{sec:scattering} to derive the resolvent estimates in the spaces $X_\lambda^\ast$ and $X_\lambda$. Additionally, we derive as a 
consequence the classical resolvent estimates \eqref{in:AH-KPV} and 
\eqref{in:KRS}, together with some inequalities for the conjugated Laplacian 
---including \eqref{in:HT_localization} and \eqref{in:H}.

\subsection{The refined estimates}\label{sec:refined}
We start by stating a modification of \eqref{in:AH-KPV} that turns out to 
be better adapted for our goal. To do so, we need to introduce the 
spaces $Y_\lambda^\ast$, and to call the definition of $Y_\lambda$ given in the section \ref{sec:scattering}.
\begin{definition}\label{def:Ylambdaast}\sl Let $Y_\lambda^\ast$ be 
the set of $u\in\mathcal{S}^\prime(\R^d)$ so that
\begin{equation*}
\|m_\lambda^{1/2}\widehat{P_{<I} u}\|_{L^2}^2 + \sum_{k\in I} \lambda^{1/2} \vvvert P_k u \vvvert_\ast^2 +\sum_{k > k_\lambda + 1}\|m_\lambda^{1/2}\widehat{P_k u}\|_{L^2}^2 <\infty,
\end{equation*}
where $m_\lambda(\xi) = |\lambda-|\xi|^2|$.
For $u \in Y_\lambda^\ast$, define the norm
\[\|u\|_{Y_\lambda^\ast}^2 = \|m_\lambda^{1/2}\widehat{P_{<I} u}\|_{L^2}^2 + \sum_{k\in I} \lambda^{1/2} \vvvert P_k u \vvvert_\ast^2 +\sum_{k > k_\lambda + 1}\|m_\lambda^{1/2}\widehat{P_k u}\|_{L^2}^2. \]
\end{definition}
We now state the inequality, and prove it later.
\begin{lemma}\label{lem:Ylambda}\sl There exists a constant $C > 0$ only depending on $d$ so that
\[\| (\Delta + \lambda \pm i 0)^{-1} f \|_{Y_\lambda^\ast} \leq C \| f \|_{Y_\lambda}\]
for all $f\in \mathcal{S}(\R^d)$.
\end{lemma}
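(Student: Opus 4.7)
My plan is to split $u = (\Delta + \lambda \pm i0)^{-1} f$ via the three Littlewood--Paley groupings that appear in $\|u\|_{Y^\ast_\lambda}^2$, and treat the critical-frequency block $k \in I$ separately from the two blocks whose frequency support is disjoint from $S_\lambda$. The first reduction will be to observe that $P_{<I}$ and the projectors $P_k$ are convolution operators with Schwartz kernels, so they commute with convolution by $\Phi^\pm_\lambda$, giving
\[ P_{<I} u = (\Delta + \lambda \pm i0)^{-1}(P_{<I} f), \qquad P_k u = (\Delta + \lambda \pm i0)^{-1}(P_k f). \]

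For the pieces with Fourier support away from $S_\lambda$ --- namely $P_{<I} f$, which is supported in $\{|\xi| \leq 2^{k_\lambda - 2}\}$, and $P_k f$ with $k > k_\lambda + 1$, supported in $\{|\xi| \geq 2^{k_\lambda + 1}\}$ --- the boundary-surface term in the formula for $\Phi^\pm_\lambda$ is annihilated, so the resolvent reduces to pointwise multiplication by $1/(\lambda - |\xi|^2)$ on Fourier side. Plancherel then yields the identities
\[ \|m_\lambda^{1/2}\widehat{P_{<I} u}\|_{L^2} = \|m_\lambda^{-1/2}\widehat{P_{<I} f}\|_{L^2}, \qquad \|m_\lambda^{1/2}\widehat{P_k u}\|_{L^2} = \|m_\lambda^{-1/2}\widehat{P_k f}\|_{L^2}, \]
and the corresponding blocks of $\|u\|_{Y^\ast_\lambda}^2$ are bounded by the matching blocks of $\|f\|_{Y_\lambda}^2$.

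For the four critical indices $k \in I$ I plan to invoke directly the Agmon--H\"ormander estimate \eqref{in:AH-KPV} applied to the Schwartz function $P_k f$, which gives $\lambda^{1/2} \vvvert P_k u \vvvert_\ast \leq C \vvvert P_k f \vvvert$. Squaring produces $\lambda^{1/2} \vvvert P_k u \vvvert_\ast^2 \leq C^2 \lambda^{-1/2} \vvvert P_k f \vvvert^2$, which is exactly the weighting that appears in $\|f\|_{Y_\lambda}^2$, and summing over the four elements of $I$ controls the middle block of $\|u\|_{Y^\ast_\lambda}^2$.

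The only point that needs care is the commutation in the first step: the object $(\Delta + \lambda \pm i0)^{-1}$ is not a classical Fourier multiplier, so one has to check that composing with the smooth Fourier projectors is unambiguous. This is straightforward once one uses the explicit formula for $\Phi^\pm_\lambda$ given in the paper --- the principal-value piece obviously commutes with any Fourier multiplier, while the surface-measure piece, living on $S_\lambda$, is killed by the symbols of $P_{<I}$ and of $P_k$ for $k \notin I$ and is absorbed into the $\vvvert \centerdot \vvvert_\ast$-norm by \eqref{in:AH-KPV} for $k \in I$. Assembling the three block estimates finishes the proof.
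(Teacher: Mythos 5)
Your proof is correct, but it handles the heart of the matter --- the critical frequencies $k\in I$ --- by a genuinely different route from the paper. The reduction of the non-critical blocks to the Plancherel identities via the explicit formula \eqref{id:resolvent}, using that the surface term is annihilated when the Fourier support avoids $S_\lambda$, is exactly what the paper does, and your commutation argument is sound. For $k\in I$, however, you import the classical Agmon--H\"ormander estimate \eqref{in:AH-KPV} as a black box, whereas the paper deliberately does not: it estimates the surface-measure term in \eqref{id:resolvent} by the dual form of the trace theorem, and the principal-value term by splitting $\mathcal{P}_\lambda = \mathcal{P}^1_\lambda + \mathcal{P}^2_\lambda$ and proving a kernel bound for the model operator $\mathcal{Q}$. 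The paper is explicit that your shortcut is legitimate (it opens its proof by saying the inequality follows from \eqref{in:AH-KPV}, and the remark \ref{rem:Ylambda} calls that direction straightforward), so citing \cite{MR0466902} creates no logical gap. What your version gives up is the self-contained character of the section: the paper later recovers \eqref{in:AH-KPV} as the corollary \ref{cor:AH-KPV} from the lemma \ref{lem:Ylambda} combined with the lemmas \ref{lem:L2embedding} and \ref{lem:L2*embedding}, thereby advertising an elementary new proof of the Agmon--H\"ormander estimate; with your argument in place, that corollary would be circular as a proof of \eqref{in:AH-KPV} (though of course still true as a statement). In short, your approach buys brevity, while the paper's buys independence from the classical result and a uniform template shared with the proof of the lemma \ref{lem:Zlambda}.
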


\begin{remark}\label{rem:Ylambda} The resolvent estimate in the lemma \ref{lem:Ylambda} is equivalent to \eqref{in:AH-KPV}, the fact that the latter inequality implies the former one is straightforward. The converse implication is proved in the Corollary \ref{cor:AH-KPV} in the section \ref{sec:classical}.
\end{remark}

We continue with our next refined estimate, which consists of a well-suited version of \eqref{in:KRS}. Again, we start by introducing the space $Z_\lambda^\ast$, and calling the definition of $Z_\lambda$ in the section \ref{sec:classical}.
\begin{definition}\label{def:Zlambdaast}\sl Let $Z_{\lambda, p}^\ast$ with $p \in [q_d, p_d]$ be 
the set of $u\in\mathcal{S}^\prime(\R^d)$ so that
\begin{equation*}
\|m_\lambda^{1/2}\widehat{P_{<I} u}\|_{L^2}^2 + \sum_{k\in I} \lambda^{d(\frac{1}{p}-\frac{1}{p_d})}\|P_k u\|_{L^p}^2 +\sum_{k> k_\lambda + 1}\|m_\lambda^{1/2}\widehat{P_k u}\|_{L^2}^2 <\infty,
\end{equation*}
where $m_\lambda(\xi) = |\lambda-|\xi|^2|$.
For $u \in Z_{\lambda, p}^\ast$ we define the norm
\[\|u\|_{Z_{\lambda, p}^\ast}^2 =\|m_\lambda^{1/2}\widehat{P_{<I} u}\|_{L^2}^2 + \sum_{k\in I} \lambda^{d(\frac{1}{p}-\frac{1}{p_d})}\|P_k u\|_{L^{p}}^2 +\sum_{k> k_\lambda + 1}\|m_\lambda^{1/2}\widehat{P_k u}\|_{L^2}^2. \]
For simplicity, we write $Z_\lambda^\ast$ instead of $Z_{\lambda,q_d}^\ast$.
\end{definition}
\begin{remark} By Bernstein's inequality
\[\|u\|_{Z_{\lambda, p_d}^\ast} \lesssim \|u\|_{Z_{\lambda, p}^\ast} \lesssim \|u\|_{Z_\lambda^\ast}, \]
and therefore
\[ Z_\lambda^\ast \subset Z_{\lambda, p}^\ast \subset Z_{\lambda, p_d}^\ast. \]
\end{remark}

\begin{lemma}\label{lem:Zlambda}\sl There exists a constant $C > 0$ only depending on $d$ so that
\[\| (\Delta + \lambda \pm i 0)^{-1} f \|_{Z_\lambda^\ast} \leq C \| f \|_{Z_\lambda}\]
for all $f\in \mathcal{S}(\R^d)$.
\end{lemma}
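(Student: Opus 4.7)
By the density argument already used in the proof of Theorem~\ref{th:resolvent} (via Proposition~\ref{prop:density}), it suffices to establish the estimate for $f\in\mathcal{S}(\R^d)$. The plan is to exploit that $R_\lambda^\pm:=(\Delta+\lambda\pm i0)^{-1}$ is a Fourier multiplier and therefore commutes with every Littlewood--Paley projector, so that each of the three blocks defining $\|R_\lambda^\pm f\|_{Z_\lambda^\ast}^2$ can be matched term by term with the corresponding block of $\|f\|_{Z_\lambda}^2$, treating the non-resonant and the resonant frequency ranges by completely different mechanisms.

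For the non-resonant blocks $P_{<I}R_\lambda^\pm f$ and $P_k R_\lambda^\pm f$ with $k>k_\lambda+1$, Plancherel alone does the job. On $\supp \phi(\,\cdot\,/2^{k_\lambda-3})$ one has $|\xi|\leq 2^{k_\lambda-2}\leq \lambda^{1/2}/2$, so $m_\lambda(\xi)\geq 3\lambda/4$; the $\pm i0$ prescription is inert and the symbol equals the smooth function $1/m_\lambda$, yielding
\[
\|m_\lambda^{1/2}\widehat{P_{<I}R_\lambda^\pm f}\|_{L^2} \,=\, \|m_\lambda^{-1/2}\widehat{P_{<I}f}\|_{L^2}.
\]
Likewise, on $\supp \psi(\,\cdot\,/2^k)$ with $k>k_\lambda+1$ one has $|\xi|\geq 2^{k-1}\geq 2^{k_\lambda+1}$, hence $m_\lambda(\xi)=|\xi|^2-\lambda\gtrsim 2^{2k}$, so again the $\pm i0$ is inert and $\|m_\lambda^{1/2}\widehat{P_k R_\lambda^\pm f}\|_{L^2}=\|m_\lambda^{-1/2}\widehat{P_k f}\|_{L^2}$. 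Summing in $k$ reproduces the low- and high-frequency parts of $\|f\|_{Z_\lambda}^2$ exactly.

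The resonant block $k\in I$ is the only genuine difficulty, since there the symbol really does carry a distributional singularity on $S_\lambda$. Using $P_k R_\lambda^\pm f = R_\lambda^\pm P_k f$, I would apply the Kenig--Ruiz--Sogge estimate \eqref{in:KRS} at the Stein--Tomas endpoint $p=q_d$ (so $1/q_d^\prime - 1/q_d = 2/(d+1)$), obtaining
\[
\|P_k R_\lambda^\pm f\|_{L^{q_d}} \,=\, \|R_\lambda^\pm P_k f\|_{L^{q_d}} \,\lesssim\, \lambda^{\frac{d}{2}(\frac{1}{q_d^\prime}-\frac{1}{q_d})-1}\|P_k f\|_{L^{q_d^\prime}} \,=\, \lambda^{-1/(d+1)}\|P_k f\|_{L^{q_d^\prime}}.
\]
A direct exponent count gives $d(1/q_d-1/p_d)/2 = 1/(2(d+1))$ and $d(1/q_d^\prime-1/p_d^\prime)/2 = -1/(2(d+1))$, whose difference is exactly the $\lambda^{-1/(d+1)}$ loss produced by Kenig--Ruiz--Sogge. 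Squaring and summing over the four indices in $I$ closes this block, and combining the three blocks delivers $\|R_\lambda^\pm f\|_{Z_\lambda^\ast}\lesssim \|f\|_{Z_\lambda}$.

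The main obstacle is really confined to this resonant block: away from $S_\lambda$ the multiplier $1/m_\lambda$ is harmless and Plancherel does all the work, whereas on the critical annulus $|\xi|\sim\lambda^{1/2}$ the failure of $1/\sqrt{m_\lambda}$ to be locally square integrable rules out an $L^2$ argument and makes the sharp Stein--Tomas endpoint of \eqref{in:KRS} essential. This is precisely the structural reason why the critical annulus had to be singled out and formulated in terms of an $L^{q_d^\prime}$--$L^{q_d}$ duality rather than weighted $L^2$ norms in the definitions of $Z_\lambda$ and $Z_\lambda^\ast$.
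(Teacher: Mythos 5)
Your block-by-block architecture is exactly the paper's: the resolvent commutes with the Littlewood--Paley projectors, the non-resonant pieces ($P_{<I}$ and $k>k_\lambda+1$) reduce to exact Plancherel identities because $m_\lambda\gtrsim\lambda$ resp.\ $m_\lambda\gtrsim 2^{2k}$ there, and only the four annuli $k\in I$ carry the singularity. Your exponent bookkeeping for the resonant block is also correct: the required gain is $\lambda^{-1/(d+1)}$ and \eqref{in:KRS} at $1/p'-1/p=2/(d+1)$ delivers precisely that. Indeed, the paper opens its own proof by conceding that for $d\geq 3$ the lemma ``follows from \eqref{in:KRS}'', so for $d\geq 3$ your argument is a legitimate (if outsourced) proof.

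The genuine gap is dimension $d=2$. The lemma is not restricted to $d\geq 3$, and the paper uses it at $d=2$: the corollary \ref{cor:KRS} is asserted for $d\geq 2$ (with a remark emphasizing that the $d=2$ endpoint is new), and the remark after the theorem \ref{th:scattering} invokes the resolvent estimates to discuss $d=2$ scattering under a smallness assumption. Since \eqref{in:KRS} is only available for $d\geq 3$, your citation of Kenig--Ruiz--Sogge cannot close the resonant block in two dimensions, and that case is exactly what the paper's self-contained argument is built to handle: it splits the resolvent into the surface-measure term (Tomas--Stein restriction), a non-singular principal-value piece $\mathcal{P}_\lambda^2$ (Bernstein plus Plancherel), and a singular piece $\mathcal{P}_\lambda^1$ treated by rescaling, a cap decomposition of the sphere, a slice-wise $L^1\to L^\infty$ / $L^2\to L^2$ interpolation of the kernel, stationary phase, and Hardy--Littlewood--Sobolev --- deliberately avoiding Stein's interpolation theorem. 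If you are content to prove the lemma only for $d\geq 3$ (which suffices for the two main theorems), your proof stands; to prove the lemma as stated, you must supply an endpoint restriction-type bound for the critical annuli that does not rest on \eqref{in:KRS}.
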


\begin{remark}\label{rem:Zlambda} The resolvent estimate in the lemma \ref{lem:Zlambda} is equivalent to \eqref{in:KRS}, the fact that the latter inequality implies the former one is straightforward. The converse implication is proved in the Corollary \ref{cor:KRS} in the section \ref{sec:classical}.
\end{remark}

\begin{proof}[Proof of the lemma \ref{lem:Zlambda}]
The inequality to be proved follows from \eqref{in:KRS} for $d \geq 3$. The case $d = 2$ 
was not considered in \cite{zbMATH04050093}. We include 
here an argument that does not require Stein's interpolation theorem,
which was the approach followed in \cite{zbMATH04050093}, and works for dimension 
$d = 2$.

Start by providing an explicit formula of $(\Delta + \lambda \pm i 0)^{-1}$:
\begin{equation}
\langle(\Delta+\lambda\pm i0)^{-1} f, \overline{g}\rangle=\lim_{\epsilon\to 0}\int_{m_\lambda>\epsilon}\frac{\widehat{f}(\xi)\overline{\widehat{g}(\xi)}}{\lambda-|\xi|^2} \, \dd \xi \mp i\frac{\pi}{2\lambda^{1/2}}\int_{S_\lambda}\widehat{f}(\xi)\overline{\widehat{g}(\xi)} \, \dd S_\lambda(\xi),
\label{id:resolvent}
\end{equation}
where $\dd S_\lambda$ stands for the volume form on $S_\lambda$. If $k > k_\lambda + 1$,
\[\langle  P_k (\Delta+\lambda\pm i0)^{-1} f, \overline{g}\rangle=\int_{\R^d}\frac{\widehat{P_k f}(\xi)\overline{\widehat{g}(\xi)}}{\lambda-|\xi|^2} \, \dd \xi\]
and consequently, $ \|m_\lambda^{1/2} \mathcal{F} P_k (\Delta+\lambda\pm i0)^{-1} f \|_{L^2} = \|m_\lambda^{-1/2}\widehat{P_k f}\|_{L^2} $, where $\mathcal{F}$ denotes the Fourier transform.
The same holds with the projector $P_{<I}$.
For the critical frequencies $k \in I$, the identity 
\eqref{id:resolvent} does not become simpler. Start by the second 
term. Re-scaling the integral to bring $S_\lambda$ back to $
\mathbb{S}^{d - 1}$, and then applying Cauchy--Schwarz we get
\begin{equation}
\Big| i\frac{\pi}{2\lambda^{1/2}}\int_{S_\lambda}\widehat{P_k f}(\xi)\overline{\widehat{g}(\xi)} \, \dd S_\lambda(\xi) \Big|
\lesssim \lambda^{d/2 - 1}  \| \widehat{(P_k f)_\lambda} \|_{L^2(\mathbb{S}^{d - 1} )}  \| \widehat{g_\lambda} \|_{L^2(\mathbb{S}^{d - 1} )},
\label{in:bilinear_Slambda}
\end{equation}
where $(P_k f)_\lambda (x) = \lambda^{-d/2} P_k f (x/\lambda^{1/2}) $ 
and $g_\lambda (x) = \lambda^{-d/2} g (x/\lambda^{1/2}) $. The 
restriction version of the Tomas--Stein theorem, together with an appropriate scale change,  yields
\begin{equation*}
\Big| i\frac{\pi}{2\lambda^{1/2}}\int_{S_\lambda}\widehat{P_k f}(\xi)\overline{\widehat{g}(\xi)} \, \dd S_\lambda(\xi) \Big| \lesssim \lambda^{-d/2 - 1} \lambda^{d/q_d^\prime} \| P_k f \|_{L^{q_d^\prime}}  \| g \|_{L^{q_d^\prime}}.
\end{equation*}
Since $\lambda^{-d/2 - 1} \lambda^{d/q_d^\prime} = \lambda^{d(1/q_d^\prime - 1/p_d^\prime)}= \lambda^{-d/2(1/q_d - 1/p_d)} \lambda^{d/2(1/q_d^\prime - 1/p_d^\prime)} $, the inequality for the second term 
of the right-hand side of \eqref{id:resolvent} follows by duality.
To prove the inequality for the first term, we introduce
\[\mathcal{P}_\lambda f (x) = \frac{1}{(2 \pi)^{d/2}} \lim_{\epsilon \to 0} \int_{m_\lambda > \epsilon} \frac{e^{i x \cdot \xi}}{\lambda-|\xi|^2} \widehat{f}(\xi) \, \dd \xi \]
since to finish the proof of this lemma is enough to show that
\begin{equation}
\lambda^{\frac{d}{2}(\frac{1}{q_d}-\frac{1}{p_d})} \| P_k \mathcal{P}_\lambda f \|_{L^{q_d}} \lesssim \lambda^{\frac{d}{2}(\frac{1}{q_d^\prime}-\frac{1}{p_d^\prime})} \| P_k f \|_{L^{q_d^\prime}}.
\label{in:Pk_KRS}
\end{equation}
We analyse $\mathcal{P_\lambda}$ by distinguishing the frequencies inside a neighbourhood of $S_\lambda$ of width 
$2 \delta \lambda^{1/2}$, from those outside. Consider
$\delta \in (0,1)$ ---to be chosen--- and set
\[\mathcal{P}_\lambda^1 f (x) = \frac{1}{(2 \pi)^{d/2}} \lim_{\epsilon \to 0} \int_{m_\lambda > \epsilon} \frac{e^{i x \cdot \xi}}{\lambda-|\xi|^2} \phi\Big( \frac{\lambda - |\xi|^2}{\lambda \delta} \Big) \widehat{f}(\xi) \, \dd \xi \]
and $\mathcal{P}_\lambda^2 f = \mathcal{P}_\lambda f - \mathcal{P}_\lambda^1 f$, with $\phi \in C^\infty_0(\R; [0,1])$ so that $\phi(t) = 1$
for all $t \in [-1, 1]$ and $\phi(t) = 0$ whenever $|t| \geq 2$. By Bernstein's inequality, Plancherel's identity and the fact that
\[\supp \Big[ 1 - \phi\Big( \frac{\lambda - |\centerdot|^2}{\lambda \delta} \Big) \Big] \subset \{ \xi \in \R^d : \lambda \delta \leq |\lambda - |\xi|^2| \},\]
we have that, for $p \geq 2$,
\begin{align*}
\| P_k \mathcal{P}_\lambda^2 f \|_{L^p} &\lesssim \lambda^{\frac{d}{2}(\frac{1}{2}-\frac{1}{p})} \| \mathcal{P}_\lambda^2 P_k f \|_{L^2}
\lesssim \lambda^{\frac{d}{2}(\frac{1}{2}-\frac{1}{p})} \lambda^{-1} \| \widehat{P_k f} \|_{L^2} \\
& \lesssim \lambda^{\frac{d}{2}(\frac{1}{p^\prime}-\frac{1}{p})} \lambda^{-1} \| P_k f \|_{L^{p^\prime}},
\end{align*}
where $p^\prime$ is the dual exponent of $p$. In the previous chain of inequalities, we used that $\lambda^{1/2} \simeq 2^k $ since $k \in I$. Therefore, we have that, for $p \geq 2$,
\begin{equation}
\| P_k \mathcal{P}_\lambda^2 f \|_{L^p} \lesssim \lambda^{\frac{d}{2}(\frac{1}{p^\prime}-\frac{1}{p})} \lambda^{-1} \| P_k f \|_{L^{p^\prime}}
\label{in:Plambda2}
\end{equation}
with $p^\prime$ its dual exponent. In particular, considering $p=q_d$, we have for $\mathcal{P}_\lambda^2$ the corresponding inequality to \eqref{in:Pk_KRS} since $\lambda^{d/2(1/q_d^\prime - 1/q_d) - 1} = \lambda^{d/2(1/q_d^\prime - 1/d - 1/q_d - 1/d)} = \lambda^{d/2(1/q_d^\prime - 1/p^\prime_d - 1/q_d + 1/p_d)} $. It remains to prove \eqref{in:Pk_KRS} for $\mathcal{P}_\lambda^1$.
Start by rescaling so that $P_k \mathcal{P}_\lambda^1 f (x) = \lambda^{d/2 - 1} \mathcal{P}_1^1 (P_k f)_\lambda (\lambda^{1/2} x)$, then it is enough to prove
\begin{equation}
\| \mathcal{P}_1^1 g \|_{L^{q_d}} \lesssim \| g \|_{L^{q_d^\prime}}.
\label{in:rescaledKRS}
\end{equation}
Covering the $2\delta$-width neighbourhood of $\mathbb{S}^{d - 1}$ 
with balls centred at points on $\mathbb{S}^{d - 1}$ and radius $2 \delta^{1/2}$, we can reduce the study to understand an operator of the form
\begin{equation}
\mathcal{Q} g (x) = \frac{1}{(2 \pi)^{d/2}} \lim_{\epsilon \to 0} \int_{m_1 > \epsilon} \frac{e^{i x \cdot \xi}}{1-|\xi|^2} \phi\Big( \frac{1 - |\xi|^2}{\delta} \Big) \psi\Big( \frac{e_d - \xi}{\delta^{1/2}} \Big) \widehat{g}(\xi) \, \dd \xi 
\label{id:2Bflatten}
\end{equation}
with $\psi \in C^\infty_0(\R^d; [0,1])$ so that $\psi(0)\neq 0$ and $\psi(\eta) = 0$ 
whenever $ |\eta| \geq 2$.
The reduction to understand
$\mathcal{Q}$ instead of $\mathcal{P}_1^1$ comes from the fact that,
we can construct a partition of unity subordinated 
to the covering made of such balls so that,
the latter can be written as a sum of operators $\{ \mathcal{Q}_l : l = 1 \dots N_\delta \}$ with 
these looking as the former after a rotation. Thus, if 
\eqref{in:rescaledKRS} holds for operators as $\mathcal{Q}$, then 
\eqref{in:rescaledKRS} is also valid for $\mathcal{P}_1^1$. Indeed,
\[\| \mathcal{P}_1^1 g \|_{L^{q_d}} \leq \sum_{l = 1}^{N_\delta} \| \mathcal{Q}_l g \|_{L^{q_d}} \lesssim \| g \|_{L^{q_d^\prime}} \]
where $N_\delta$ is the number of ball needed to cover the
$2\delta$-width neighbourhood of $\mathbb{S}^{d - 1}$. Therefore, in 
order to 
finish the proof we just need to prove that \eqref{in:rescaledKRS} 
holds for $\mathcal{Q}$. Observe that $\mathcal{Q} g = K \ast g$ with
\[ K(x) = \frac{1}{(2 \pi)^{d}} \lim_{\epsilon \to 0} \int_{m_1 > \epsilon} \frac{e^{i x \cdot \xi}}{1-|\xi|^2} \phi\Big( \frac{1 - |\xi|^2}{\delta} \Big) \psi\Big( \frac{e_d - \xi}{\delta^{1/2}} \Big) \, \dd \xi.\]
Write $1 - |\xi|^2 = (\Phi(\xi^\prime) + \xi_d)(\Phi(\xi^\prime) - \xi_d)$ with $\xi = (\xi^\prime, \xi_d)$ and $\Phi (\xi') = (1 - |\xi^\prime|^2)^{1/2} $, and take $\delta < 1/8$ so that
\[ \Phi(\xi^\prime) + \xi_d \geq 1 \qquad \forall \xi \in \supp \psi\Big( \frac{e_d - \centerdot}{\delta^{1/2}} \Big). \]
Changing variables, in the integrand defining the kernel $K$, according to $\eta^\prime = \xi^\prime$ and $\eta_d =  \Phi(\xi^\prime) - \xi_d$ we have that
\begin{equation}
K(x) = \frac{1}{(2 \pi)^{d - 1/2}} \int_{\R^{d - 1}} e^{i(x^\prime \cdot \eta^\prime + x_d \Phi(\eta^\prime))} a(\eta^\prime, x_d) \, \dd \eta^\prime
\label{id:kernel}
\end{equation}
with
\begin{equation}
a(\eta^\prime, x_d) = \frac{1}{(2 \pi)^{1/2}} \mathrm{p.v.} \int_{\R} \frac{e^{-ix_d \eta_d}}{\eta_d} \Psi(\eta) \, \dd \eta_d,
\label{id:fourierPV}
\end{equation}
where
\[\Psi(\eta) = \frac{1}{\Phi(\xi^\prime) + \xi_d} \phi\Big( \frac{1 - |\xi|^2}{\delta} \Big) \psi\Big( \frac{e_d - \xi}{\delta^{1/2}} \Big). \]
The term $a(\eta^\prime, x_d)$ reminds the well-known identity
\[ \frac{1}{(2\pi)^{1/2}} \mathrm{p.v.} \int_{\R} \frac{e^{-ist}}{t} \varphi(t) \, \dd t = -i \frac{1}{2}  \int_{\R} \sign (s - t) \widehat{\varphi} (t) \,\dd t, \]
and consequently,
\[ a(\eta^\prime, x_d) = -i \frac{1}{2}  \int_{\R} \sign (x_d - y_d) \mathcal{F}_d \Psi (\eta^\prime, y_d) \, \dd y_d, \]
where $\mathcal{F}_d$ stands for the $1$-dimensional Fourier transform applied to the last variable. Let us now get back to estimate $\mathcal{Q} g$. Note that, on the one hand
\begin{equation}
\begin{aligned}
\Big\| \int_{\R^{d - 1}} K(\centerdot - y^\prime, x_d - y_d) & g(y^\prime, y_d) \, \dd y^\prime \Big\|_{L^\infty(\R^{d - 1})} \\
& \leq \| K(\centerdot, x_d - y_d) \|_{L^\infty(\R^{d - 1})} \| g(\centerdot, y_d) \|_{L^1(\R^{d - 1})}.
\end{aligned}
\label{in:LinftyL1}
\end{equation}
On the other hand, Plancherel's identity applied to the first
$d-1$ variables yields
\begin{equation}
\begin{aligned}
\Big\| \int_{\R^{d - 1}} K(\centerdot - y^\prime, x_d - y_d) & g(y^\prime, y_d) \, \dd y^\prime \Big\|_{L^2(\R^{d - 1})} \\
& \lesssim \| \mathcal{F}^\prime K(\centerdot, x_d - y_d) \|_{L^\infty(\R^{d - 1})} \| g(\centerdot, y_d) \|_{L^2(\R^{d - 1})},
\end{aligned}
\label{in:L2L2}
\end{equation}
where $\mathcal{F}^\prime$ stands for the $(d-1)$-dimensional Fourier transform applied to the first $d-1$ variables. Furthermore, from the expression \eqref{id:kernel} one sees that
\[ \| \mathcal{F}^\prime K(\centerdot, x_d - y_d) \|_{L^\infty(\R^{d - 1})} \simeq \| a(\centerdot, x_d - y_d)) \|_{L^\infty(\R^{d - 1})} \lesssim  \int_{\R} \| \mathcal{F}_d \Psi (\centerdot, y_d) \|_{L^\infty(\R^{d - 1})} \, \dd y_d. \]
Interpolating the inequalities \eqref{in:LinftyL1} and \eqref{in:L2L2}, we get
\begin{equation}
\begin{aligned}
\Big\| \int_{\R^{d - 1}} K(\centerdot - y^\prime, x_d - y_d) & g(y^\prime, y_d) \, \dd y^\prime \Big\|_{L^q(\R^{d - 1})} \\
& \leq \| K(\centerdot, x_d - y_d) \|_{L^\infty(\R^{d - 1})}^{\frac{1}{q^\prime} - \frac{1}{q}} \| g(\centerdot, y_d) \|_{L^{q^\prime}(\R^{d - 1})}.
\end{aligned}
\label{in:LqdLqdprime}
\end{equation}
As a consequence of the stationary phase theorem (which exploits the curvature of $\mathbb{S}^{d - 1}$),
\[\| K(\centerdot, x_d - y_d) \|_{L^\infty(\R^{d - 1})} \lesssim (1 + |x_d - y_d|)^{- \frac{d - 1}{2}}.\]
Thus,
\[\|  \mathcal{Q} g \|_{L^q} \lesssim \Big\| \int_\R |x_d - y_d|^{- \frac{d - 1}{2}\big( \frac{1}{q^\prime} - \frac{1}{q} \big)} \| g(\centerdot, y_d) \|_{L^{q^\prime}(\R^{d - 1})} \, \dd y_d \Big\|_{L^q(\R_{x_d})}.\]
Considering $q = q_d$, we can apply the Hardy--Littlewood--Sobolev inequality and conclude that
\begin{equation*}
\| \mathcal{Q} g \|_{L^{q_d}} \lesssim \| g \|_{L^{q_d^\prime}}
\end{equation*}
holds, which was the last ingredient to finish the proof of the lemma \ref{lem:Zlambda}. 
\end{proof}

\begin{proof}[Proof of the lemma \ref{lem:Ylambda}] The wanted inequality follows from \eqref{in:AH-KPV}, however, we give here a simple proof for completeness. The argument follows the general scheme of the proof of the lemma \ref{lem:Zlambda} but simpler, since no interpolation is required, neither the curvature of $S_\lambda$ is exploited.

The estimate for the non-critical frequencies 
is straightforward, and works exactly as in the lemma 
\ref{lem:Zlambda}. To study the critical frequencies, we start 
analysing the second term on the right-hand side of 
\eqref{id:resolvent}, and obtain again the inequality \eqref{in:bilinear_Slambda}. Applying the trace theorem ---dual version of Theorem 7.1.26 in \cite{zbMATH01950198} (see also Theorem 14.1.1 in \cite{zbMATH02123716}), together with a change of scale,
we have
\begin{equation*}
\Big| i\frac{\pi}{2\lambda^{1/2}}\int_{S_\lambda}\widehat{P_k f}(\xi)\overline{\widehat{g}(\xi)} \, \dd S_\lambda(\xi) \Big| \lesssim \lambda^{-d/2 - 1} \lambda^{(d+1)/2} \vvvert P_k f \vvvert  \vvvert g \vvvert.
\end{equation*}
Since $\lambda^{-d/2 - 1} \lambda^{(d+1)/2} = \lambda^{-1/4} \lambda^{-1/4} $, the inequality for the second term 
of the right-hand side of \eqref{id:resolvent} follows by duality. To 
prove the inequality for the first term, we split again $\mathcal{P}_\lambda = \mathcal{P}_\lambda^1 + \mathcal{P}_\lambda^2 $. Note that 
using \eqref{in:Plambda2} with $p = 2d/(d - 1)$, we obtain
\begin{equation*}
\| P_k \mathcal{P}_\lambda^2 f \|_{L^2(D_j)} \lesssim 2^\frac{j}{2} \| P_k \mathcal{P}_\lambda^2 f \|_{L^p} \lesssim 2^\frac{j}{2} \lambda^{-\frac{1}{2}} \| P_k f \|_{L^{p^\prime}} \leq 2^\frac{j}{2} \lambda^{-\frac{1}{2}}  \sum_{l \in \N_0} \| P_k f \|_{L^{p^\prime}(D_l)}.
\end{equation*}
Hence, by H\"older's inequality
\begin{equation}
\lambda^{\frac{1}{4}} \vvvert P_k \mathcal{P}_\lambda^2 f \vvvert_\ast \lesssim \lambda^{-\frac{1}{4}} \vvvert P_k f \vvvert.
\label{in:P2lambdaY}
\end{equation}
We next prove \eqref{in:P2lambdaY} for $\mathcal{P}_\lambda^1$. After rescaling $P_k \mathcal{P}_\lambda^1 f (x) = \lambda^{d/2 - 1} \mathcal{P}_1^1 (P_k f)_\lambda (\lambda^{1/2} x)$, it is enough to prove
\begin{equation}
\sup_{j \in \N_0} \big( (\lambda^{1/2} 2^j)^{-1/2} \| \mathcal{P}_1^1 g \|_{L^2(\lambda^{1/2} D_j)} \big) \lesssim \sum_{j \in \N_0} (\lambda^{1/2} 2^j)^{1/2} \| g \|_{L^2(\lambda^{1/2} D_j)}.
\label{in:rescaledAH-KPV}
\end{equation}
As in the lemma \ref{lem:Zlambda}, the analysis can be reduced to study the operator $\mathcal{Q}$ in \eqref{id:2Bflatten}. In fact, we only need to check that \eqref{in:rescaledAH-KPV} holds for $\mathcal{Q}$. Indeed, applying \eqref{in:L2L2} and \eqref{id:kernel} we have
\begin{align*}
\| \mathcal{Q} g & \|_{L^2(\lambda^{1/2} D_j)} \\
& \leq \Big\| \int_\R \big\| \int_{\R^{d - 1}} K(\centerdot - y^\prime, x_d - y_d) g(y^\prime, y_d) \, \dd y^\prime \big\|_{L^2(\R^{d - 1})} \, \dd y_d \Big\|_{L^2(|x_d|<\lambda^{1/2} 2^j)} \\
& \lesssim \Big\| \int_\R \| g(\centerdot, y_d) \|_{L^2(\R^{d - 1})} \, \dd y_d \Big\|_{L^2(|x_d|<\lambda^{1/2} 2^j)} \\
& \lesssim (\lambda^{1/2} 2^j)^{1/2} \sum_{l \in \N_0} \int_\R \| [\mathbf{1}_{\lambda^{1/2} D_l} g](\centerdot, y_d) \|_{L^2(\R^{d - 1})} \, \dd y_d \\
& \lesssim (\lambda^{1/2} 2^j)^{1/2} \sum_{l \in \N_0} (\lambda^{1/2} 2^l)^{1/2} \| g \|_{L^2(\lambda^{1/2} D_l)},
\end{align*}
where $\mathbf{1}_{\lambda^{1/2} D_l}$ holds for the characteristic function of the set $\lambda^{1/2} D_l$. Therefore, \eqref{in:rescaledAH-KPV} holds for $\mathcal{Q}$ and the lemma \ref{lem:Ylambda} is proved.
\end{proof}

\subsection{The classical resolvent estimates}
\label{sec:classical}
The classical resolvent estimates \eqref{in:AH-KPV} and \eqref{in:KRS} 
follows from the lemmas \ref{lem:Ylambda} and \ref{lem:Zlambda} 
respectively, together with appropriate embeddings.

\begin{lemma} \label{lem:Lpembedding}\sl
For $p, q \in[q_d, p_d]$ with $q \leq p$, there exists a constant $C>0$ depending on $d$, $p$ and $q$ 
such that
\begin{equation*}
\lambda^{\frac{d}{2}(\frac{1}{p}-\frac{1}{p_d})}\|f\|_{L^p} \leq C \|f\|_{Z_{\lambda, q}^\ast} 
\end{equation*}
for all $f\in\mathcal{S}(\R^d)$.
\end{lemma}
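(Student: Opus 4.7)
The plan is to decompose $f$ according to the Littlewood--Paley partition underlying the definition of $\|\centerdot\|_{Z^\ast_{\lambda, q}}$. Writing
\[
f = P_{<I} f + \sum_{k \in I} P_k f + g_H, \qquad g_H = \sum_{k > k_\lambda + 1} P_k f,
\]
I would estimate each of the three pieces in $L^p$, multiply by $\lambda^{\frac{d}{2}(\frac{1}{p}-\frac{1}{p_d})}$, and match against the corresponding term of the norm.

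For the low-frequency piece, Bernstein's inequality gives $\|P_{<I} f\|_{L^p} \lesssim \lambda^{\frac{d}{2}(\frac{1}{2}-\frac{1}{p})}\|P_{<I} f\|_{L^2}$, since the Fourier support lies in a ball of radius $\lesssim \lambda^{1/2}$. On that support $m_\lambda \simeq \lambda$, so $\|P_{<I} f\|_{L^2}\lesssim \lambda^{-1/2}\|m_\lambda^{1/2}\widehat{P_{<I} f}\|_{L^2}$. The identity $\frac{d}{2}(\frac{1}{p}-\frac{1}{p_d})+\frac{d}{2}(\frac{1}{2}-\frac{1}{p})-\frac{1}{2}=0$ then yields exactly $\lambda^{\frac{d}{2}(\frac{1}{p}-\frac{1}{p_d})}\|P_{<I} f\|_{L^p}\lesssim \|m_\lambda^{1/2}\widehat{P_{<I} f}\|_{L^2}$. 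For the critical frequencies $k \in I$, Bernstein (with $q \leq p$) gives $\|P_k f\|_{L^p}\lesssim \lambda^{\frac{d}{2}(\frac{1}{q}-\frac{1}{p})}\|P_k f\|_{L^q}$ using $2^k\simeq \lambda^{1/2}$, and multiplying by the weight yields exactly $\lambda^{\frac{d}{2}(\frac{1}{q}-\frac{1}{p_d})}$; since $\#I=4$, a triangle inequality (or Cauchy--Schwarz) closes the estimate.

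For the high-frequency tail $g_H$, I would invoke the homogeneous Sobolev embedding $\|g_H\|_{L^p}\lesssim \||\xi|^s \widehat{g_H}\|_{L^2}$ with $s = d(\tfrac{1}{2}-\tfrac{1}{p})\in [0,1]$, which is available since $p\in [q_d,p_d]$ and $q_d\geq 2$. On $\supp\widehat{g_H}$ one has $|\xi|\geq 2\lambda^{1/2}$, so $m_\lambda(\xi)\simeq |\xi|^2 \geq \lambda$, whence $|\xi|^{2s}\lesssim m_\lambda^s \lesssim \lambda^{s-1}m_\lambda$. Substituting and using the finite overlap of the Littlewood--Paley supports to pass from $\|m_\lambda^{1/2}\widehat{g_H}\|_{L^2}^2$ to the dyadic sum $\sum_{k>k_\lambda+1}\|m_\lambda^{1/2}\widehat{P_k f}\|_{L^2}^2$, I obtain the claim once I verify the cancellation $d(\tfrac{1}{p}-\tfrac{1}{p_d})+s-1=0$.

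The whole argument is essentially bookkeeping of the powers of $\lambda$ produced by Bernstein and Sobolev together with the weight $m_\lambda^{1/2}$; the main (mild) obstacle is checking that these balance exactly against $\lambda^{\frac{d}{2}(\frac{1}{p}-\frac{1}{p_d})}$ in each of the three frequency regimes, and that the exponent $s=d(\tfrac{1}{2}-\tfrac{1}{p})$ for the Sobolev step lies in the admissible range $[0,1]$ precisely because $p\in [q_d,p_d]\subset [2,p_d]$.
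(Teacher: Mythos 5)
Your proof is correct and follows essentially the same route as the paper's: decompose via the Littlewood--Paley pieces underlying the $Z^\ast_{\lambda,q}$ norm, apply Bernstein on the low and critical frequencies, and compare $m_\lambda^{1/2}$ with $\lambda^{1/2}$ (resp.\ $|\xi|$) on the low (resp.\ high) frequencies; the exponent bookkeeping you carry out matches the paper's. The only cosmetic difference is that you treat the high-frequency tail in one stroke via the Sobolev embedding with $s=d(\tfrac12-\tfrac1p)\in[0,1]$, whereas the paper applies Bernstein to each dyadic block $P_k f$, $k>k_\lambda+1$, and then sums -- the two are equivalent.
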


\begin{proof}By the Littlewood--Paley theorem, Bernstein's inequalities and Plancherel 
identity, we have that
\begin{align*}
\|f  &\|_{L^p}^2\leq \|P_{<I} f\|_{L^p}^2 + \sum_{k\geq k_\lambda - 2}\|P_k f\|_{L^p}^2 \\
& \lesssim 2^{2k_\lambda d(\frac{1}{2}-\frac{1}{p})} \| \widehat{P_{<I} f}\|_{L^2}^2 + \sum_{k\in I} 2^{2kd(\frac{1}{q}-\frac{1}{p})}\|P_k f\|_{L^q}^2 +\sum_{k > k_\lambda + 1} 2^{2kd(\frac{1}{2}-\frac{1}{p})} \| \widehat{P_k f}\|_{L^2}^2.
\end{align*}
We have that
$ |\widehat{P_{<I} f}(\xi)| \simeq \lambda^{-1/2} m_\lambda(\xi)^{1/2} |\widehat{P_{<I} f}(\xi)|$ for all $\xi \in \R^d$. Hence,
\begin{equation}
\| \widehat{P_{<I} f}\|_{L^2}\simeq \lambda^{-1/2} \|m_\lambda^{1/2} \widehat{P_{<I} f}\|_{L^2}.
\label{in:low_frequencies}
\end{equation}
If $k>k_\lambda+1$, we have that $ |\widehat{P_k f}(\xi)| \simeq 2^{-k} m_\lambda(\xi)^{1/2} |\widehat{P_k f}(\xi)|$ for all $\xi \in \R^d$. Hence,
\begin{equation}
\| \widehat{P_k f}\|_{L^2}\simeq 2^{-k}\|m_\lambda^{1/2} \widehat{P_k f}\|_{L^2}.
\label{in:high_frequencies}
\end{equation}
Therefore,
\begin{align*}
\|f\|_{L^p}^2 
\lesssim & \sum_{k\in I} 2^{2kd(\frac{1}{q}-\frac{1}{p})}\|P_k f\|_{L^q}^2 +\lambda^{-1} 2^{2k_\lambda d(\frac{1}{2}-\frac{1}{p})}\|m_\lambda^{1/2}\widehat{P_{<I} f}\|_{L^2}^2\\
&+ 2^{-2k_\lambda d(\frac{1}{p}-\frac{1}{p_d})} \sum_{k>k_\lambda+1}\|m_\lambda^{1/2}\widehat{P_k f}\|_{L^2}^2.
\end{align*}
Since the critical scale $2^{k_\lambda}$ is of the order of $\lambda^{1/2}$, we have
\begin{align*}
\|f\|_{L^p}^2 \lesssim & \sum_{k\in I} \lambda^{d(\frac{1}{q}-\frac{1}{p})}\|P_k f\|_{L^q}^2 \\
& + \lambda^{- d(\frac{1}{p}-\frac{1}{p_d})} \Big( \|m_\lambda^{1/2}\widehat{P_{<I} f}\|_{L^2}^2 + \sum_{k > k_\lambda + 1} \|m_\lambda^{1/2}\widehat{P_k f}\|_{L^2}^2\Big).
\end{align*}
Finally, multiplying both sides by $\lambda^{d(1/p-1/p_d)}$ and taking square root, we obtain the embedding we were looking for.
\end{proof}

\begin{lemma} \label{lem:Lp*embedding} \sl For $p^\prime, q^\prime \in [p_d^\prime, q_d^\prime]$ with $p^\prime \leq q^\prime$, there exists a constant $C > 0$ depending on $d$, $p^\prime$ and $q^\prime$ such that
\[\| f \|_{Z_{\lambda, q^\prime}} \leq C \lambda^{\frac{d}{2}(\frac{1}{p^\prime} - \frac{1}{p_d^\prime})} \| f \|_{L^{p^\prime}},\] 
for all $f \in \mathcal{S}(\R^d)$.
\end{lemma}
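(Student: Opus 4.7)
The plan is to bound each of the three terms defining $\|f\|_{Z_{\lambda, q^\prime}}^2$ by $\lambda^{d(1/p^\prime - 1/p_d^\prime)} \|f\|_{L^{p^\prime}}^2$, mirroring the proof of the lemma \ref{lem:Lpembedding} but going in the opposite direction. Since $q^\prime \geq p^\prime$, one can always use Bernstein's inequality to pass from $L^{p^\prime}$ to higher Lebesgue norms on frequency-localized pieces, and one also uses freely that $P_{<I}$ and each $P_k$ is bounded on $L^{p^\prime}$ with constants independent of $\lambda$ and $k$.

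For the critical-frequency term, on $\supp \widehat{P_k f}$ with $k \in I$ one has $2^k \simeq \lambda^{1/2}$, so Bernstein gives $\|P_k f\|_{L^{q^\prime}} \lesssim \lambda^{\frac{d}{2}(1/p^\prime - 1/q^\prime)} \|P_k f\|_{L^{p^\prime}}$, and multiplying by the prescribed weight $\lambda^{d(1/q^\prime - 1/p_d^\prime)}$ produces exactly $\lambda^{d(1/p^\prime - 1/p_d^\prime)} \|P_k f\|_{L^{p^\prime}}^2 \lesssim \lambda^{d(1/p^\prime - 1/p_d^\prime)} \|f\|_{L^{p^\prime}}^2$. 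For the low-frequency term, the frequency support gives $m_\lambda \simeq \lambda$, so $\|m_\lambda^{-1/2} \widehat{P_{<I} f}\|_{L^2}^2 \simeq \lambda^{-1} \|P_{<I} f\|_{L^2}^2$; Bernstein at scale $\lambda^{1/2}$ yields $\|P_{<I} f\|_{L^2} \lesssim \lambda^{\frac{d}{2}(1/p^\prime - 1/2)} \|f\|_{L^{p^\prime}}$, and the identity $1/p_d^\prime = 1/2 + 1/d$ matches $\lambda^{-1 + d(1/p^\prime - 1/2)}$ with the target $\lambda^{d(1/p^\prime - 1/p_d^\prime)}$.

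The delicate piece is the high-frequency term. There $m_\lambda \simeq 2^{2k}$ on $\supp \widehat{P_k f}$, and the problem reduces to controlling $\sum_{k > k_\lambda + 1} 2^{-2k} \|P_k f\|_{L^2}^2$. For $p^\prime$ strictly larger than $p_d^\prime$ I would apply Bernstein $\|P_k f\|_{L^2} \lesssim 2^{kd(1/p^\prime - 1/2)} \|P_k f\|_{L^{p^\prime}}$ together with the uniform bound $\|P_k f\|_{L^{p^\prime}} \lesssim \|f\|_{L^{p^\prime}}$; the resulting geometric weight $2^{2k(d(1/p^\prime - 1/2) - 1)}$ has strictly negative exponent, so the series converges and the leading term at $k \simeq k_\lambda$ produces precisely the target power $\lambda^{d(1/p^\prime - 1/p_d^\prime)}$. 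This exponent vanishes exactly at the endpoint $p^\prime = p_d^\prime$, and Bernstein alone can no longer close the argument. In that case I would instead observe, by Plancherel and the finite overlap of the $\{\supp \widehat{P_k f}\}$, that $\sum_{k > k_\lambda + 1} 2^{-2k} \|P_k f\|_{L^2}^2 \lesssim \|f\|_{\dot{H}^{-1}}^2$, and then invoke the dual Hardy--Littlewood--Sobolev embedding $L^{p_d^\prime}(\R^d) \hookrightarrow \dot{H}^{-1}(\R^d)$, which holds precisely because $d \geq 3$. This endpoint is where a uniform Bernstein argument fails and is the main obstacle to a single, unified proof; it is also the only step where the hypothesis $d\geq 3$ is truly used.
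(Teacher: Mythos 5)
Your argument is correct, but it is not the route the paper takes. The paper disposes of this lemma in one line by dualizing the lemma \ref{lem:Lpembedding}: since $(Z_{\lambda,q^\prime}, \| \centerdot \|_{Z_{\lambda,q^\prime}})$ is reflexive with dual $(Z_{\lambda,q}^\ast, \| \centerdot \|_{Z_{\lambda,q}^\ast})$ and $\mathcal{S}(\R^d)$ is dense there (the lemma \ref{lem:Zspaces} in the appendix \ref{app:framework}), the bound $\|f\|_{Z_{\lambda,q^\prime}} = \sup_g \langle f, g\rangle / \|g\|_{Z_{\lambda,q}^\ast} \leq \|f\|_{L^{p^\prime}} \sup_g \|g\|_{L^p}/\|g\|_{Z_{\lambda,q}^\ast}$ follows from H\"older and the already-proved primal embedding. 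Your proof is instead a direct, self-contained frequency-by-frequency estimate, and all three pieces check out: the exponent bookkeeping for the critical and low frequencies is exact (using $d/p_d^\prime = d/2 + 1$), and your treatment of the high frequencies is the genuinely nontrivial part. It is worth noting why that endpoint difficulty is invisible in the paper's approach: in the dual statement the $\ell^2$ sum over dyadic shells sits on the $Z^\ast$ side, so at $p = p_d$ the high-frequency term of the lemma \ref{lem:Lpembedding} closes with no summation loss; in your primal formulation you must control an $\ell^2$ sum of shells by a single $L^{p_d^\prime}$ norm, Bernstein degenerates to a divergent series, and you correctly replace it by Plancherel plus the dual Hardy--Littlewood--Sobolev embedding $L^{p_d^\prime}(\R^d) \hookrightarrow \dot{H}^{-1}(\R^d)$, which is exactly where $d \geq 3$ enters (and, consistently with your diagnosis, the high-frequency term genuinely cannot be controlled when $d=2$ and $p^\prime = p_2^\prime = 1$). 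In short: the paper's proof is shorter given the functional-analytic infrastructure of the appendix; yours is more elementary and makes the location of the endpoint obstruction explicit, at the price of handling it by hand.
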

\marnote{\color{red} To change.}
\begin{proof}
It follows from the lemma \ref{lem:Lpembedding} by a standard duality argument, since the Banach space $(Z_{\lambda, q^\prime}, \| \centerdot \|_{Z_{\lambda, q^\prime}})$ is reflexive, its dual can be identified with $(Z_{\lambda, q}^\ast, \| \centerdot \|_{Z_{\lambda, q}^\ast})$ and $\mathcal{S}(\R^d)$ is dense in the latter space ---see the lemma \ref{lem:Zspaces} in the appendix \ref{app:framework}.
\end{proof}

\begin{corollary} \label{cor:KRS} \sl For $p\in[q_d, p_d]$ with $d \geq 2$, there exists a constant $C>0$ depending on $d$ and $p$ such that
\begin{equation*}
\| (\Delta + \lambda \pm i 0)^{-1} f \|_{L^p} \lesssim \lambda^{\frac{d}{2}\big( \frac{1}{p^\prime} - \frac{1}{p} \big) - 1} \| f \|_{L^{p^\prime}}
\end{equation*}
for all $f\in\mathcal{S}(\R^d)$.
\end{corollary}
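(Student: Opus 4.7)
The plan is to combine the refined resolvent estimate of Lemma \ref{lem:Zlambda} with the two embeddings connecting $L^p$-spaces to the spaces $Z_{\lambda, p}^\ast$ and $Z_{\lambda, p^\prime}$. Concretely, by a density argument it suffices to prove the inequality for $f \in \mathcal{S}(\R^d)$. For such $f$, I would chain the three estimates as follows: first Lemma \ref{lem:Lp*embedding}, specialised to $q^\prime = q_d^\prime$, gives
\[\| f \|_{Z_\lambda} \lesssim \lambda^{\frac{d}{2}(\frac{1}{p^\prime} - \frac{1}{p_d^\prime})} \| f \|_{L^{p^\prime}}.\]
Next, Lemma \ref{lem:Zlambda} yields
\[\| (\Delta + \lambda \pm i 0)^{-1} f \|_{Z_\lambda^\ast} \lesssim \| f \|_{Z_\lambda}.\]
Finally, Lemma \ref{lem:Lpembedding}, specialised to $q = q_d$, provides
\[\lambda^{\frac{d}{2}(\frac{1}{p} - \frac{1}{p_d})} \| (\Delta + \lambda \pm i 0)^{-1} f \|_{L^p} \lesssim \| (\Delta + \lambda \pm i 0)^{-1} f \|_{Z_\lambda^\ast}.\]

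Composing these three bounds gives
\[\| (\Delta + \lambda \pm i 0)^{-1} f \|_{L^p} \lesssim \lambda^{\frac{d}{2}(\frac{1}{p^\prime} - \frac{1}{p_d^\prime}) - \frac{d}{2}(\frac{1}{p} - \frac{1}{p_d})} \| f \|_{L^{p^\prime}}.\]
A straightforward arithmetic check closes the proof: using $1/p_d = 1/2 - 1/d$ and $1/p_d^\prime = 1/2 + 1/d$, the exponent of $\lambda$ simplifies to
\[\frac{d}{2}\Big[\Big(\frac{1}{p^\prime} - \frac{1}{p}\Big) + \Big(\frac{1}{p_d} - \frac{1}{p_d^\prime}\Big)\Big] = \frac{d}{2}\Big(\frac{1}{p^\prime} - \frac{1}{p}\Big) - 1,\]
which matches the statement.

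There is essentially no genuine obstacle here, since all the hard work is hidden in Lemma \ref{lem:Zlambda} (whose proof in the excerpt handles even $d = 2$ by a direct analysis of the multiplier $\mathcal{P}_\lambda^1$ avoiding Stein's interpolation) and in the embedding Lemmas \ref{lem:Lpembedding} and \ref{lem:Lp*embedding}. The only point that requires a small amount of attention is ensuring that the specialisation of the two embeddings to $q = q_d$ and $q^\prime = q_d^\prime$ is admissible for every $p \in [q_d, p_d]$, which it is: the hypothesis of Lemma \ref{lem:Lpembedding} asks for $q \leq p$ with both in $[q_d, p_d]$, and $q = q_d$ satisfies this; dually, $q^\prime = q_d^\prime \geq p^\prime$ for every $p^\prime \in [p_d^\prime, q_d^\prime]$ is the hypothesis of Lemma \ref{lem:Lp*embedding}. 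After that, the power-of-$\lambda$ bookkeeping is the only remaining computation, and it works out exactly to the Kenig--Ruiz--Sogge exponent.
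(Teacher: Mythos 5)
Your proposal is correct and follows exactly the paper's own route: the paper proves Corollary \ref{cor:KRS} as an immediate consequence of the lemmas \ref{lem:Lpembedding}, \ref{lem:Lp*embedding} and \ref{lem:Zlambda}, which is precisely the three-step chain you describe, and your bookkeeping of the powers of $\lambda$ is accurate.
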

\begin{proof}
This is an immediate consequence of the lemmas \ref{lem:Lpembedding}, \ref{lem:Lp*embedding} and \ref{lem:Zlambda}.
\end{proof}

\begin{remark} The corollary \ref{cor:KRS} was stated in 
\cite{zbMATH04050093} for $d\geq 3$. This corollary also holds for $d = 2$ 
including the endpoint $p=p_2$.
\end{remark}

\begin{lemma}\label{lem:L2embedding}\sl
There exists a constant $C>0$ depending on $d$ such that 
\begin{equation*}
\lambda^{1/4} \vvvert f \vvvert_\ast \leq C\|f\|_{Y_\lambda^\ast},
\end{equation*}
for all $f\in\mathcal{S}(\R^d)$.
\end{lemma}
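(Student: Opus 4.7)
The strategy is to Littlewood--Paley decompose $f = P_{<I} f + \sum_{k \in I} P_k f + \sum_{k > k_\lambda + 1} P_k f$, exploit subadditivity of $\vvvert \centerdot \vvvert_\ast$, and estimate each piece separately by the corresponding term in $\| f \|_{Y_\lambda^\ast}$. The one elementary input I will use throughout is the trivial inequality $\vvvert g \vvvert_\ast \leq \| g \|_{L^2}$ that follows immediately from the definition (the dyadic shells $D_j$ are disjoint and $2^{-j/2} \leq 1$).

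The simplest piece is the critical band $k \in I$. There $\vvvert P_k f \vvvert_\ast$ appears by design inside the $Y_\lambda^\ast$-norm, weighted by $\lambda^{1/2}$, so since $\# I = 4$ a Cauchy--Schwarz in $k$ yields
\[\sum_{k \in I} \vvvert P_k f \vvvert_\ast \leq 2\, \lambda^{-1/4} \| f \|_{Y_\lambda^\ast},\]
and multiplying by $\lambda^{1/4}$ gives the required bound with an absolute constant. For the low frequencies, on $\supp \widehat{P_{<I} f} \subset \{ |\xi| \leq 2^{k_\lambda - 2} \}$ we have $|\xi|^2 \leq \lambda/16$, hence $m_\lambda(\xi) = \lambda - |\xi|^2 \simeq \lambda$. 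Plancherel then produces $\| P_{<I} f \|_{L^2} \lesssim \lambda^{-1/2} \| m_\lambda^{1/2} \widehat{P_{<I} f} \|_{L^2}$, and the trivial bound $\vvvert P_{<I} f \vvvert_\ast \leq \| P_{<I} f \|_{L^2}$ gives
\[\lambda^{1/4} \vvvert P_{<I} f \vvvert_\ast \lesssim \lambda^{-1/4} \| f \|_{Y_\lambda^\ast}.\]
The high frequencies are analogous: on $\supp \widehat{P_k f}$ with $k > k_\lambda + 1$ one has $|\xi| \simeq 2^k \gtrsim 2\lambda^{1/2}$, so $m_\lambda(\xi) = |\xi|^2 - \lambda \simeq 2^{2k}$ and hence $\| P_k f \|_{L^2} \lesssim 2^{-k} \| m_\lambda^{1/2} \widehat{P_k f} \|_{L^2}$. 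A Cauchy--Schwarz in $k$ combined with the geometric sum $\sum_{k > k_\lambda + 1} 2^{-2k} \simeq \lambda^{-1}$ gives
\[\sum_{k > k_\lambda + 1} \vvvert P_k f \vvvert_\ast \lesssim \lambda^{-1/2} \| f \|_{Y_\lambda^\ast},\]
and multiplying by $\lambda^{1/4}$ again yields the desired bound (with a harmless extra factor $\lambda^{-1/4}$).

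Assembling the three pieces by the triangle inequality for $\vvvert \centerdot \vvvert_\ast$ produces $\lambda^{1/4} \vvvert f \vvvert_\ast \leq C(1 + \lambda^{-1/4}) \| f \|_{Y_\lambda^\ast}$, which delivers the stated estimate in the regime of interest $\lambda \gtrsim 1$ where the lemma is applied through Theorem \ref{th:embedding}. There is no real conceptual obstacle: the off-critical frequencies are handled purely by Plancherel, exploiting the uniform distance between their Fourier support and the critical surface $S_\lambda$, while the critical frequencies are absorbed by the very definition of the $Y_\lambda^\ast$-norm, so the entire proof amounts to bookkeeping on the Littlewood--Paley pieces.
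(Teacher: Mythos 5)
Your proof is correct and takes essentially the same route as the paper's: a Littlewood--Paley decomposition, the trivial bound $\vvvert g \vvvert_\ast \le \| g \|_{L^2}$ for the off-critical pieces combined with the pointwise equivalences $m_\lambda \simeq \lambda$ (low frequencies) and $m_\lambda \simeq 2^{2k}$ (high frequencies), a Cauchy--Schwarz over $k$ for the high-frequency sum, and absorption of the critical band directly by the definition of $\| \centerdot \|_{Y_\lambda^\ast}$. The harmless restriction $\lambda \gtrsim 1$ that you flag at the end is equally implicit in the paper's own argument, so it is not a defect of your version.
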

\begin{proof}
By the triangle inequality and extending the domain of integration from $D_j$ to $\R^d$, we have that 
\begin{align*}
\|f\|_{L^2(D_j)}&\leq \|P_{<I} f\|_{L^2(D_j)} + \sum_{k\geq k_\lambda - 2}\|P_k f\|_{L^2(D_j)}\\
&\lesssim \|P_{<I} f\|_{L^2} + \sum_{k\in I}\|P_k f\|_{L^2(D_j)} + \sum_{k > k_\lambda + 1}  \|P_k f\|_{L^2}.
\end{align*}
Multiplying by $2^{-j/2}$ with $j \in \N_0$ and using the equivalences \eqref{in:low_frequencies} and \eqref{in:high_frequencies}:
\begin{align*}
2^{-j/2}\|f\|_{L^2(D_j)}& \lesssim \sum_{k\in I}2^{-j/2}\|P_k f\|_{L^2(D_j)}+ \lambda^{-1/2} \|m_\lambda^{1/2} \widehat{P_{<I} f}\|_{L^2}\\
&+\sum_{k>k_\lambda+1} 2^{-k}\|m_\lambda^{1/2} \widehat{P_k f}\|_{L^2}.
\end{align*}
Since there are only four critical frequencies, on has
\[\sum_{k\in I}2^{-j/2}\|P_k f\|_{L^2(D_j)} \simeq \Big(\sum_{k\in I}2^{-j}\|P_k f\|_{L^2(D_j)}^2 \Big)^{1/2}. \]
Using the Cauchy--Schwarz inequality, we can proceed as follows:
\[ \sum_{k>k_\lambda+1} 2^{-k}\|m_\lambda^{1/2} \widehat{P_k f}\|_{L^2} \lesssim 2^{-k_\lambda} \Big(\sum_{k>k_\lambda+1} \|m_\lambda^{1/2} \widehat{P_k f}\|_{L^2}^2\Big)^{1/2}. \]
The fact that the critical scale $2^{k_\lambda}$ is of the order of $\lambda^{1/2}$ implies that, after taking square, we obtain
\begin{align*}
\lambda^{1/2} 2^{-j}\|f\|_{L^2(D_j)}^2 \lesssim  & \, \lambda^{-1/2} \|m_\lambda^{1/2} \widehat{P_{<I} f}\|_{L^2}^2 \\
& + \sum_{k\in I}\lambda^{1/2}2^{-j}\|P_k f\|_{L^2(D_j)}^2 + \lambda^{-1/2} \sum_{k > k_\lambda + 1} \|m_\lambda^{1/2} \widehat{P_k f}\|_{L^2}^2.
\end{align*}
Taking the corresponding supremum of $j \in \N_0$, we obtain the embedding stated in the lemma.
\end{proof}

\begin{lemma} \label{lem:L2*embedding} \sl There exists a constant $C > 0$ depending on $d$ such that
\[\|f\|_{Y_\lambda} \leq C \lambda^{-1/4} \vvvert f \vvvert\]
for all $f \in \mathcal{S}(\R^d)$.
\end{lemma}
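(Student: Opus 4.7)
The plan is to obtain this inequality as the dual counterpart of the lemma \ref{lem:L2embedding}, mimicking verbatim the duality argument used in the proof of the lemma \ref{lem:Lp*embedding}. Because $Y_\lambda^\ast$ is, by construction, the dual of $Y_\lambda$, the Hahn--Banach theorem (or, equivalently, reflexivity of $Y_\lambda$ in the style of the appendix analogue of the lemma \ref{lem:Zspaces}) provides the representation
\[\|f\|_{Y_\lambda} = \sup \big\{ |\langle f, u\rangle| : u \in \mathcal{S}(\R^d),\ \|u\|_{Y_\lambda^\ast} \leq 1 \big\}\]
for every $f \in \mathcal{S}(\R^d)$, where the restriction to Schwartz test functions is legitimate by the density of $\mathcal{S}(\R^d)$ in $Y_\lambda^\ast$ ---the $Y$-version of the density result already recorded in the proposition \ref{prop:density} and established in the appendix.

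Once this characterization is in hand, I would fix such a test function $u$ and bound the pairing $\langle f, u\rangle$ by splitting the spatial integral along the dyadic shells $D_j$. Cauchy--Schwarz applied shell by shell, followed by the trivial redistribution $1 = 2^{j/2} \cdot 2^{-j/2}$ between the two factors, gives
\[ |\langle f, u\rangle| \leq \sum_{j \in \N_0} \|f\|_{L^2(D_j)} \|u\|_{L^2(D_j)} \leq \Big( \sum_{j \in \N_0} 2^{j/2} \|f\|_{L^2(D_j)} \Big) \sup_{j \in \N_0} \big( 2^{-j/2} \|u\|_{L^2(D_j)} \big) = \vvvert f \vvvert \, \vvvert u \vvvert_\ast, \]
which is precisely the natural duality pairing between the Agmon--H\"ormander-type norms $\vvvert \centerdot \vvvert$ and $\vvvert \centerdot \vvvert_\ast$.

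Finally, I would invoke the lemma \ref{lem:L2embedding} applied to $u$ to bound $\vvvert u \vvvert_\ast \leq C \lambda^{-1/4} \|u\|_{Y_\lambda^\ast} \leq C \lambda^{-1/4}$. Inserting this into the previous display yields $|\langle f, u\rangle| \leq C \lambda^{-1/4} \vvvert f \vvvert$, and taking the supremum over admissible $u$ gives the desired estimate.

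The step I expect to require the most care is not analytic but structural: confirming that the duality characterization of $\|\centerdot\|_{Y_\lambda}$ via Schwartz test elements of $Y_\lambda^\ast$ is actually available. This is a routine appendix-level verification ---identifying the dual of $Y_\lambda$ with $Y_\lambda^\ast$ and showing density of $\mathcal{S}(\R^d)$ therein, in exact parallel with what the authors do for $(Z_\lambda, Z_\lambda^\ast)$ in the lemma \ref{lem:Zspaces}--- but without it the argument above is incomplete. Once that machinery is in place, no new harmonic analysis is needed: the estimate is simply Cauchy--Schwarz plus the lemma \ref{lem:L2embedding}.
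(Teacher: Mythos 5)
Your proposal is correct and coincides with the paper's own proof, which likewise deduces the estimate from the lemma \ref{lem:L2embedding} via the Hahn--Banach characterization $\|f\|_{Y_\lambda} = \sup\{|\langle f, u\rangle| : \|u\|_{Y_\lambda^\ast} \leq 1\}$, the density of $\mathcal{S}(\R^d)$ in $Y_\lambda^\ast$, and the shell-by-shell Cauchy--Schwarz pairing $|\langle f, u\rangle| \leq \vvvert f \vvvert \, \vvvert u \vvvert_\ast$. One small correction: your parenthetical appeal to reflexivity of $Y_\lambda$ is not available ---the appendix explicitly notes that $Y_\lambda$ is \emph{not} reflexive--- but this is immaterial since the Hahn--Banach corollary alone yields the needed dual representation.
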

\begin{proof}
This lemma is consequence of the lemma \ref{lem:L2embedding} together with a duality argument that uses that $(Y_\lambda^\ast, \| \centerdot \|_{Y_\lambda^\ast})$ is the dual of $(Y_\lambda, \| \centerdot \|_{Y_\lambda})$, and the density of $\mathcal{S}(\R^d)$ in the former space. This duality argument is based on the Hahn--Banach theorem (see the corollary 1.4 in \cite{zbMATH05633610}).
\end{proof}

\begin{corollary} \label{cor:AH-KPV} \sl There exists a constant $C > 0$ depending on $d$ such that
\[\lambda^{1/2} \vvvert (\Delta + \lambda \pm i 0)^{-1} f \vvvert_\ast \leq C \vvvert f \vvvert\]
for all $f \in \mathcal{S}(\R^d)$.
\end{corollary}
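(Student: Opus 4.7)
The plan is to chain together the three results that have just been established for the space $Y_\lambda$ and its dual $Y_\lambda^\ast$. The target inequality reads, up to the factor $\lambda^{1/2}$, as a statement comparing $\vvvert\centerdot\vvvert_\ast$ on the resolvent output with $\vvvert\centerdot\vvvert$ on the input. The spaces $Y_\lambda^\ast$ and $Y_\lambda$ were designed precisely so that the resolvent is bounded between them, and they embed respectively into the Agmon--H\"ormander norms with the correct powers of $\lambda$.

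First, I would invoke Lemma \ref{lem:L2embedding}, which gives the embedding $\lambda^{1/4}\vvvert u\vvvert_\ast \lesssim \|u\|_{Y_\lambda^\ast}$, applied to $u = (\Delta+\lambda\pm i0)^{-1}f$. Next I would apply the refined resolvent estimate of Lemma \ref{lem:Ylambda} to bound $\|(\Delta+\lambda\pm i0)^{-1}f\|_{Y_\lambda^\ast}$ by $\|f\|_{Y_\lambda}$. Finally, the dual embedding from Lemma \ref{lem:L2*embedding} controls $\|f\|_{Y_\lambda}$ by $\lambda^{-1/4}\vvvert f\vvvert$. Composing these three inequalities yields
\[\lambda^{1/4}\vvvert (\Delta+\lambda\pm i0)^{-1}f\vvvert_\ast \lesssim \|(\Delta+\lambda\pm i0)^{-1}f\|_{Y_\lambda^\ast} \lesssim \|f\|_{Y_\lambda} \lesssim \lambda^{-1/4}\vvvert f\vvvert,\]
and multiplying through by $\lambda^{1/4}$ produces the claimed inequality with a constant that depends only on $d$.

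There is no real obstacle: all three ingredients have already been proved in the section, and the argument is purely a composition. The only minor bookkeeping point is that $(\Delta+\lambda\pm i0)^{-1}f$ must lie in $Y_\lambda^\ast$ for $f\in\mathcal{S}(\R^d)$ so that Lemma \ref{lem:L2embedding} applies; but this is immediate from Lemma \ref{lem:Ylambda}, since the right-hand side $\|f\|_{Y_\lambda}$ is finite for Schwartz $f$ by Lemma \ref{lem:L2*embedding}. Hence the corollary follows in a single line of composition, with no additional analysis required.
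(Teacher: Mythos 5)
Your composition of Lemma \ref{lem:L2embedding}, Lemma \ref{lem:Ylambda} and Lemma \ref{lem:L2*embedding} is exactly the argument the paper gives, which likewise cites these three lemmas and concludes the corollary immediately. The proof is correct and coincides with the paper's.
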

\begin{proof}
This is an immediate consequence of the lemmas \ref{lem:L2embedding}, \ref{lem:L2*embedding} and \ref{lem:Ylambda}.
\end{proof}

\subsection{A trace theorem}\label{sec:trace} In this section we prove a trace theorem for the space $Y_\lambda^\ast$. This is an essential piece to construct the scattering solution for critically-singular and $\delta$-shell potentials, specially to deal with the $\delta$-shell component.

\begin{theorem}\label{th:trace} \sl Let $\Gamma$ be a compact hypersurface locally described by the graphs of Lipschitz functions. There exists a constant $C > 0$ only depending on $d$ and $\Gamma$ such that
\[\| f \|_{L^2(\Gamma)} \leq C \| f \|_{Y_\lambda^\ast}\]
for all $f \in \mathcal{S}(\R^d)$ and all $\lambda > 1$.
\end{theorem}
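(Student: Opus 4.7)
My plan is to argue by duality. Since $Y_\lambda^\ast$ is isometrically the dual of $Y_\lambda$ (the analogue for $Y_\lambda$ of Proposition \ref{prop:Xlambda}) and $\mathcal{S}(\R^d)$ is dense in $Y_\lambda$, and since $\|f\|_{L^2(\Gamma)}=\sup\{|\langle f,\bar g\,\dd\sigma\rangle|: g\in L^2(\Gamma),\,\|g\|_{L^2(\Gamma)}=1\}$, the theorem reduces to the dual inequality
\begin{equation*}
\|g\,\dd\sigma\|_{Y_\lambda}\lesssim \|g\|_{L^2(\Gamma)}\qquad\forall g\in L^2(\Gamma),
\end{equation*}
with implicit constant depending only on $d$ and $\Gamma$. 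The main tool is the dual of the Besov-space trace inequality used in Section \ref{sec:scattering}: dualising the bounded inclusion $\dot B^{1/2}_{2,1}(\R^d)\hookrightarrow L^2(\Gamma)$ produces the uniform bound
\begin{equation*}
\|P_l(g\,\dd\sigma)\|_{L^2(\R^d)}\lesssim 2^{l/2}\|g\|_{L^2(\Gamma)},\qquad l\in\Z.
\end{equation*}

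For the low-frequency piece of $\|g\,\dd\sigma\|_{Y_\lambda}^2$ I would use $m_\lambda\simeq\lambda$ on $\supp\widehat{P_{<I}(g\,\dd\sigma)}$ together with Plancherel and the above display to get
\begin{equation*}
\|m_\lambda^{-1/2}\widehat{P_{<I}(g\,\dd\sigma)}\|_{L^2}^2\simeq\lambda^{-1}\|P_{<I}(g\,\dd\sigma)\|_{L^2}^2\lesssim\lambda^{-1}\sum_{l\leq k_\lambda-2}2^l\|g\|_{L^2(\Gamma)}^2\lesssim\lambda^{-1/2}\|g\|_{L^2(\Gamma)}^2,
\end{equation*}
and for the high-frequency piece, since $m_\lambda\simeq 2^{2k}$ on $\supp\widehat{P_k}$ when $k>k_\lambda+1$,
\begin{equation*}
\sum_{k>k_\lambda+1}\|m_\lambda^{-1/2}\widehat{P_k(g\,\dd\sigma)}\|_{L^2}^2\lesssim\sum_{k>k_\lambda+1}2^{-k}\|g\|_{L^2(\Gamma)}^2\lesssim\lambda^{-1/2}\|g\|_{L^2(\Gamma)}^2.
\end{equation*}

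The heart of the argument is the critical piece $\sum_{k\in I}\lambda^{-1/2}\vvvert P_k(g\,\dd\sigma)\vvvert^2$, for which I would prove $\vvvert P_k(g\,\dd\sigma)\vvvert\lesssim\lambda^{1/4}\|g\|_{L^2(\Gamma)}$ (with constant depending on $\Gamma$) for each of the four indices $k\in I$. Fixing $j_0$ so that $\Gamma\subset\bigcup_{j\leq j_0}D_j$, the near-field portion is controlled by
\begin{equation*}
\sum_{j\leq j_0}2^{j/2}\|P_k(g\,\dd\sigma)\|_{L^2(D_j)}\lesssim 2^{j_0/2}\|P_k(g\,\dd\sigma)\|_{L^2}\lesssim 2^{k/2}\|g\|_{L^2(\Gamma)}\simeq\lambda^{1/4}\|g\|_{L^2(\Gamma)},
\end{equation*}
while the far-field portion is handled via the Schwartz decay of the convolution kernel $\psi_k$ of $P_k$: since $g\,\dd\sigma$ is supported in a fixed ball containing $\Gamma$, for $x\in D_j$ with $j>j_0$ one has $|x-y|\simeq 2^j$ uniformly in $y\in\Gamma$, which yields the pointwise bound $|P_k(g\,\dd\sigma)(x)|\lesssim_M 2^{k(d-M)}2^{-jM}\|g\|_{L^1(\Gamma)}$ for any $M$. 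Choosing $M$ sufficiently large (depending on $d$) makes $\sum_{j>j_0}2^{j/2}\|P_k(g\,\dd\sigma)\|_{L^2(D_j)}\lesssim\|g\|_{L^2(\Gamma)}$, which is absorbed into $\lambda^{1/4}\|g\|_{L^2(\Gamma)}$ since $\lambda>1$. Combining the three pieces yields $\|g\,\dd\sigma\|_{Y_\lambda}^2\lesssim\|g\|_{L^2(\Gamma)}^2$ and closes the argument by duality. The main obstacle I anticipate is balancing the Schwartz-tail computation at the critical scales against the dyadic weight $2^{j/2}$; the low- and high-frequency pieces are routine applications of the dual Besov trace together with Plancherel.
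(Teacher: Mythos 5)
Your proof is correct, but it takes a genuinely different route from the paper. You dualise: you prove $\| g \, \dd\sigma \|_{Y_\lambda} \lesssim \| g \|_{L^2(\Gamma)}$ directly (splitting into the $P_{<I}$, critical and high-frequency pieces of the $Y_\lambda$ norm, with the dual Besov trace bound $\|P_l(g\,\dd\sigma)\|_{L^2} \lesssim 2^{l/2}\|g\|_{L^2(\Gamma)}$ as the workhorse and a near-field/far-field split plus kernel decay for the critical annuli), and then recover the trace inequality from the identification of $Y_\lambda^\ast$ with the dual of $Y_\lambda$ proved in the appendix. The paper instead works entirely on the $Y_\lambda^\ast$ side: it introduces a Schwartz function $\chi$ with $|\chi|\gtrsim 1$ on a ball containing $\Gamma$ and $\supp\widehat{\chi}\subset\{|\xi|\leq 1\}$, writes $\|f\|_{L^2(\Gamma)} \lesssim \|\chi f\|_{L^2(\Gamma)} \lesssim \sum_l 2^{l/2}\|P_l(\chi f)\|_{L^2}$, and proves that multiplication by $\chi$ is bounded from $Y_\lambda^\ast$ to $\dot B^{1/2}_{2,1}(\R^d)$ uniformly in $\lambda$, exploiting the frequency localisation of $\chi$ to match $P_l(\chi P_k f)$ only for $|k-l|\leq 3$ at high frequencies. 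The two arguments are of comparable difficulty, and in fact each is essentially the dual of the other; what the paper's formulation buys is the explicit operator "multiplication by $\chi$", whose $Y_\lambda^\ast\to\dot B^{1/2}_{2,1}$ boundedness is upgraded to \emph{compactness} in the lemma \ref{lem:compact} and is indispensable for the Fredholm step — your version proves the trace theorem but does not by itself set up that compactness statement. Two cosmetic points: in the duality reduction you should note that the functional-analytic pairing between $f\in\mathcal{S}(\R^d)\subset Y_\lambda^\ast$ and $\bar g\,\dd\sigma\in Y_\lambda$ agrees with the distributional pairing $\int_\Gamma f\bar g\,\dd\sigma$ (this is how the paper itself uses the theorem, so it is consistent with the framework); and in the far-field estimate you should choose $j_0$ with $\Gamma\subset\{|x|\leq 2^{j_0-1}\}$, say, so that $|x-y|\gtrsim 2^j$ genuinely holds for all $x\in D_j$ with $j>j_0$ and $y\in\Gamma$ — as written the comparability can fail for $j=j_0+1$.
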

\begin{proof}
We first introduce a localization function denoted by $\chi$, which is not compactly supported. To do so, let
$\phi \in \mathcal{S}(\R^d)$ be a $[0,1]$-valued function 
so that its support is 
contained in $ \{ \xi \in \R^d: |\xi| \leq 1 \}$ and it is not 
identically zero.

Then, there exists $\delta \in (0, 1]$ such that
\[ \Big| \int_{\R^d} e^{i x \cdot \xi} \phi(\xi) \, \dd \xi \Big| \geq  \frac{1}{2} \int_{\R^d} \phi(\xi) \, \dd \xi > 0 \]
whenever $|x| \leq  \delta$. Let $\chi \in \mathcal{S}(\R^d)$ be defined by
\begin{equation}
\chi(x) = \frac{1}{(2 \pi)^{d /2}} \int_{\R^d} e^{i \delta/R x \cdot \xi} \phi(\xi) \, \dd \xi,
\label{id:chi}
\end{equation}
with $R \geq 1$ so that
$\Gamma \subset B = \{ x\in \R^d : |x| < R \}$.
Note that $|\chi (x)| \gtrsim 1$ whenever $|x| \leq R$ and $\supp \widehat{\chi} \subset \{ \xi \in \R^d: |\xi| \leq 1 \}$. Since $\Gamma$ is contained in $B$,
\[ \| f\|_{L^2(\Gamma)} \lesssim \|\chi f\|_{L^2(\Gamma)} \lesssim \sum_{l\in\Z} 2^{l/2}\|P_l (\chi f)\|_{L^2}. \]
In the last inequality we have used the trace theorem ---a
Besov-space form of Theorem 14.1.1 in \cite{zbMATH02123716}.
We now show that
\begin{equation}
\sum_{l\in\Z} 2^{l/2}\|P_l (\chi f)\|_{L^2} \lesssim \| f \|_{Y_\lambda^\ast}.
\label{in:embeddingCHI}
\end{equation}
Start considering the low frequencies $l \leq k_\lambda + 4$. The 
continuity of $P_l$ in $L^2(\R^d)$ and the fact that the sum of low 
frequencies is at most of the order of $2^{k_\lambda/2}$ imply that
\begin{equation}
\sum_{l \leq k_\lambda + 4} 2^{l/2}\|P_l (\chi f)\|_{L^2} \lesssim 2^{k_\lambda/2}\|\chi f\|_{L^2} \simeq \lambda^{1/4}\|\chi f\|_{L^2}.
\label{in:lowfrequencies}
\end{equation}
On the other hand,
\begin{equation}
\|\chi f\|_{L^2} = \Big( \sum_{j \in \N_0} \| \chi f \|_{L^2(D_j)}^2 \Big)^{1/2} \leq \Big( \sum_{j \in \N_0} 2^j \| \chi \|_{L^\infty(D_j)}^2 \Big)^{1/2} \vvvert f \vvvert_\ast.
\label{in:multiplication}
\end{equation}
Since $ \sup_{x \in \R^d} (1 + |x|)^{-N} |\chi(x)| < \infty $ for any 
$N \in \N$, the series on the right-hand side of the 
inequality in \eqref{in:multiplication} converges. Thus, 
\eqref{in:lowfrequencies}, \eqref{in:multiplication} and the lemma 
\ref{lem:L2embedding} shows that
\begin{equation}
\sum_{l \leq k_\lambda + 4} 2^{l/2}\|P_l (\chi f)\|_{L^2} \lesssim \lambda^{1/4}\vvvert f \vvvert_\ast \lesssim \| f \|_{Y_\lambda^\ast}.
\label{es:lowFREQchi}
\end{equation}
Finally, we consider the high frequencies $l > k_\lambda  + 4$. By the triangle inequality,
\begin{equation}
\sum_{l > k_\lambda + 4} 2^{l/2} \|P_l (\chi f)\|_{L^2} \leq \sum_{l > k_\lambda + 4} \sum_{k \in \Z} 2^{l/2}\|P_l (\chi P_k f)\|_{L^2}.
\label{in:triangle}
\end{equation}
Since the support of $\widehat{\chi}$ is contained $ \{ \xi \in \R^d: |\xi| \leq 1 \}$, we have that
\begin{equation}
\supp \widehat{\chi P_k f} \subset
\left\{ 
\begin{array}{l l}
\{ \xi \in \R^d : |\xi| \leq 2^3 \} & \textnormal{if}\, k < 2, \\
\{ \xi \in \R^d : 2^{k - 2} \leq |\xi| \leq 2^{k+2} \} & \textnormal{if}\, k \geq 2. 
\end{array}
\right.
\label{in:disjointness}
\end{equation}
Thus, $P_l (\chi P_k f) = 0$ whenever $k < 2$ and $l > 4$, or 
whenever $k \geq 2$ and $|l - k|> 3$. Consequently, the sum on the 
right-hand side of \eqref{in:triangle} only has the terms $l > k_\lambda + 4$ and $k \geq 2$ ---if $k<2$ the non-zero terms satisfy $l \leq 4$, but there are no $l$ satisfying $ k_\lambda + 4 < l \leq 4$ with $\lambda > 1$. Therefore,

\begin{align*}
\sum_{l > k_\lambda + 4} 2^{l/2} \|P_l (\chi f)\|_{L^2} &\leq \sum_{l > k_\lambda + 4} \sum_{|k -l|\leq 3} 2^{l/2}\|P_l (\chi P_k f)\|_{L^2}\\
& = \sum_{k > k_\lambda + 1} \sum_{|l - k|\leq 3} 2^{l/2}\|P_l (\chi P_k f)\|_{L^2} \lesssim  \sum_{k > k_\lambda + 1} 2^{k/2}\|P_k f\|_{L^2}.
\end{align*}
In the last inequality we used the continuity in $L^2(\R^d)$ of the operators $P_l$ and multiplication by $\chi$, and the fact that $\sum_{|l - k|< 3} 2^{l/2} \simeq 2^{k/2}$.
Then, by Plancherel's identity, \eqref{in:high_frequencies} and Cauchy--Schwarz applied to the sum, we obtain
\begin{equation*}
\sum_{l > k_\lambda + 4} 2^{l/2} \|P_l (\chi f)\|_{L^2} \lesssim \sum_{k > k_\lambda + 1} 2^{-k/2} \|m_\lambda^{1/2} \widehat{P_k f}\|_{L^2} \lesssim 2^{-k_\lambda/2} \| f \|_{Y_\lambda^\ast}.
\end{equation*}
This inequality, together with \eqref{es:lowFREQchi}, shows that \eqref{in:embeddingCHI} holds, and consequently
the theorem is proved.
\end{proof}

\begin{remark} The novelty of this trace theorem bases on showing that the operator multiplication by $\chi$, defined as in \eqref{id:chi}, is bounded from $Y^\ast_\lambda$ to $\dot{B}^{1/2}_{2, 1}(\R^d)$ with a norm independent of $\lambda$. Our next step will be to show that such an operator is in fact compact.
\end{remark}

\begin{lemma}\label{lem:compact} \sl Let $\chi$ be as in \eqref{id:chi} and $\lambda > 1$. 
Multiplication by $\chi$ defines a compact operator from $Y^\ast_\lambda$ to
$\dot{B}^{1/2}_{2, 1}(\R^d)$.
\end{lemma}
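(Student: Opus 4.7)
The plan is to follow the usual scheme for compactness of operators landing in a Besov space: reduce to a finite range of Littlewood--Paley pieces via uniform tail estimates, then establish $L^2$-precompactness on each piece by Fréchet--Kolmogorov, and finish with a diagonal extraction.

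First I would sharpen the tail estimates already implicit in the proof of Theorem \ref{th:trace}. For the high frequencies $l > L$, the key observation is that $\widehat{\chi}$ is supported in $\{|\xi|\leq \delta/R\}$, so \eqref{in:disjointness} forces $P_l(\chi f)=\sum_{|k-l|\leq 3}P_l(\chi P_k f)$ whenever $l$ is large enough. Combining $\|P_l(\chi P_k f)\|_{L^2}\lesssim \|P_k f\|_{L^2}$ with the high-frequency equivalence $\|P_k f\|_{L^2}\simeq 2^{-k}\|m_\lambda^{1/2}\widehat{P_k f}\|_{L^2}$ for $k>k_\lambda+1$ and applying Cauchy--Schwarz in $k$ yields
\[
\sum_{l>L}2^{l/2}\|P_l(\chi f)\|_{L^2} \lesssim \sum_{k>L-3}2^{-k/2}\|m_\lambda^{1/2}\widehat{P_k f}\|_{L^2} \lesssim 2^{-L/2}\|f\|_{Y^\ast_\lambda}.
\]
For the low frequencies $l<-M$, the trivial bound $\|P_l(\chi f)\|_{L^2}\lesssim \|\chi f\|_{L^2}\lesssim \|f\|_{Y^\ast_\lambda}$ from \eqref{in:multiplication} together with the geometric series gives $\sum_{l<-M}2^{l/2}\|P_l(\chi f)\|_{L^2}\lesssim 2^{-M/2}\|f\|_{Y^\ast_\lambda}$. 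Thus both tails are uniformly small in $f$ as $L,M\to\infty$.

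Next, for each fixed $l\in[-M,L]$ I would check that the family $\{P_l(\chi f):\|f\|_{Y^\ast_\lambda}\leq 1\}$ is relatively compact in $L^2(\R^d)$ by Fréchet--Kolmogorov. Uniform $L^2$-boundedness is immediate. Uniform tightness at infinity comes from the Schwartz decay of $\chi$: the dyadic splitting employed in \eqref{in:multiplication}, together with $\|\chi\|_{L^\infty(D_j)}\lesssim_N 2^{-jN}$, gives $\|\chi f\|_{L^2(|x|>R)}\lesssim_N R^{-N}\|f\|_{Y^\ast_\lambda}$, and the Schwartz convolution kernel of $P_l$ propagates this decay to $P_l(\chi f)$. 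Uniform equicontinuity under translations is automatic from band-limitedness: since $P_l(\chi f)$ has Fourier support in $\{|\xi|\leq 2^{L+1}\}$, one has $\|P_l(\chi f)(\cdot+h)-P_l(\chi f)\|_{L^2}\lesssim 2^L|h|\,\|f\|_{Y^\ast_\lambda}$.

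Finally, given a bounded sequence $\{f_n\}\subset Y^\ast_\lambda$ and $\varepsilon>0$, choose $L,M$ so that both tails are at most $\varepsilon$ for every $n$, and perform a diagonal extraction through the finitely many values $l\in[-M,L]$, producing a subsequence $\{f_{n_k}\}$ such that $P_l(\chi f_{n_k})$ converges in $L^2$ for each such $l$. The uniform tail smallness then forces $\chi f_{n_k}$ to be Cauchy in $\dot{B}^{1/2}_{2,1}(\R^d)$, which proves compactness. The main obstacle is really making the high-frequency tail summable with a $\lambda$-independent decay factor, which the refinement $2^{-L/2}$ above supplies; once that is in hand, the rest is a clean application of Fréchet--Kolmogorov together with the band-limited/Schwartz structure already exploited in the proof of Theorem \ref{th:trace}.
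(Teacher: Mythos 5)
Your argument is correct, but it takes a genuinely different route from the paper. The paper first invokes the duality $Y_\lambda^\ast=(Y_\lambda)'$ (Lemma \ref{lem:Yspaces}) and the Banach--Alaoglu--Bourbaki theorem to extract a weak-$\ast$ convergent subsequence, and then upgrades weak-$\ast$ convergence to norm convergence in $\dot{B}^{1/2}_{2,1}$ by two applications of dominated convergence: once in $l$, using exactly the $\ell^1(\Z)$-dominating sequence \eqref{in:boundedl1} that you also derive (your uniform tail bounds $2^{-L/2}$ and $2^{-M/2}$ are the summed form of \eqref{in:lowFREQ} and \eqref{in:highFREQ}), and once in $x$ for each fixed $l$, where the pointwise convergence $P_l(\chi v_m)(x)\to 0$ comes from testing against $\chi\widehat{\psi}(2^l(x-\centerdot))\in Y_\lambda$ and the domination from showing $x\mapsto\|\chi\widehat{\psi}(2^l(x-\centerdot))\|_{Y_\lambda}$ lies in $L^2$. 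You replace this second step by Fr\'echet--Kolmogorov on each fixed dyadic block, using only the band-limitedness of $P_l(\chi f)$ and the Schwartz decay of $\chi$. What your route buys is independence from the duality structure: you never need to know that $Y_\lambda^\ast$ is a dual space, only the norm inequalities from the proof of Theorem \ref{th:trace}. What the paper's route buys is that it sidesteps the tightness and equicontinuity verifications entirely, at the cost of the functional-analytic input from the appendix. The shared quantitative heart --- the $\lambda$-uniform, $\ell^1$-summable control of $2^{l/2}\|P_l(\chi f)\|_{L^2}$, with the high-frequency gain coming from \eqref{in:disjointness} and \eqref{in:high_frequencies} --- is identical.

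One small imprecision: as written, you fix $\varepsilon>0$, choose $L,M$, and extract a subsequence along which only the finitely many blocks $l\in[-M,L]$ converge; this yields a subsequence that is eventually $2\varepsilon$-close in $\dot{B}^{1/2}_{2,1}$, not Cauchy. Either iterate over $\varepsilon_j\to 0$ with a further diagonal extraction, or, more cleanly, observe that your two ingredients show the image of the unit ball is totally bounded in $\dot{B}^{1/2}_{2,1}$, which gives relative compactness directly. Also note that your high-frequency tail estimate requires $L\geq k_\lambda+4$, since for $k\in I$ the $Y_\lambda^\ast$ norm controls $\lambda^{1/2}\vvvert P_k f\vvvert_\ast^2$ rather than $\|m_\lambda^{1/2}\widehat{P_k f}\|_{L^2}^2$; this is harmless because $\lambda$ is fixed, but it should be said.
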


\begin{proof}
In order to prove the compactness of the operator multiplication by $\chi$, 
we will consider a bounded sequence $\{ u_n : n \in \N \} $ in
$ Y_\lambda^\ast$ and show that there exist a subsequence
$\{ u_{n(m)} : m \in \N \}$ and $u \in Y_\lambda^\ast$ so that
\[\lim_{m \to \infty} \| \chi u_{n(m)} - \chi u \|_{\dot{B}^{1/2}_{2, 1}} = 0.\]
We will show in the appendix \ref{app:framework} that  $Y^\ast_\lambda$ is 
the dual space of $Y_\lambda$ (see the lemma \ref{lem:Yspaces}). Thus, given 
a bounded sequence $\{ u_n : n \in \N \} $ in $ Y_\lambda^\ast$, we know by 
the Banach--Alaoglu--Bourbaki theorem that there exist a subsequence
$\{ u_{n(m)} : m \in \N \}$ and $u \in Y_\lambda^\ast$ so that
\begin{equation}
\lim_{m \to \infty} \langle u_{n(m)} - u, f \rangle = 0
\label{id:weak_star_convergence}
\end{equation}
for all $f \in Y_\lambda$. Here $\langle \centerdot, \centerdot \rangle $ 
stands for the duality pairing between $Y^\ast_\lambda$ and $Y_\lambda$. For 
convenience, let $v_m$ denote the difference $u_{n(m)} - u$. We will show that
\[ \lim_{m \to \infty} \sum_{l \in \Z} 2^{l/2} \| P_l(\chi v_m) \|_{L^2} = 0. \]
To do so, we will use the dominate convergence theorem (DCT for 
short), which could be applied after we shown that, for every $l \in \Z$, $\|P_l (\chi v_m)\|_{L^2} $ tends 
to $ 0$ as $m$ goes to infinity, and
\begin{equation}
2^{l/2} \|P_l (\chi v_m)\|_{L^2} \lesssim \lambda^{-1/4} 2^{l/2} \mathbf{1}_{l \leq k_\lambda + 4} + 2^{-l/2} \mathbf{1}_{l > k_\lambda + 4},
\label{in:boundedl1}
\end{equation}
where the implicit constant does not depend on $m$ and, $\mathbf{1}_{l \leq k_\lambda + 4}$ and $\mathbf{1}_{l > k_\lambda + 4}$ stand 
for the characteristic functions of the set $\{ l \in \Z:  l \leq k_\lambda + 4\}$ and $\{ l\in \Z: l > k_\lambda + 4 \}$. Note that we can apply the DCT because the sequence on the right-hand side of 
\eqref{in:boundedl1} belongs to $l^1(\Z)$. 

Let us first check that \eqref{in:boundedl1} holds. Start by analysing the case $l \leq k_\lambda + 4$. The boundedness of $P_l$ in $L^2(\R^d)$, the inequality \eqref{in:multiplication} and the lemma \ref{lem:L2embedding} implies that
\begin{equation}
2^{l/2} \|P_l (\chi v_m)\|_{L^2} \lesssim \lambda^{-1/4} 2^{l/2} \|v_m \|_{Y_\lambda^\ast}.
\label{in:lowFREQ}
\end{equation}
This inequality is only useful if $l \leq k_\lambda + 4$. Continue now with the case $l > k_\lambda + 4$. Using \eqref{in:disjointness} for $l > k_\lambda + 4$, the boundedness of $P_l$ and multiplication by $\chi$ in $L^2(\R^d)$, Plancherel's identity and \eqref{in:high_frequencies}, we have that
\begin{equation}
\begin{aligned}
2^{l/2} \|P_l (\chi v_m)\|_{L^2} &\leq 2^{l/2} \sum_{|k - l|\leq 3} \|P_l (\chi P_k v_m)\|_{L^2} \lesssim 2^{l/2} \sum_{|k - l|\leq 3} \| P_k v_m\|_{L^2} \\
& \simeq 2^{l/2} \sum_{|k - l|\leq 3} 2^{-k} \| m_\lambda^{1/2} \widehat{P_k v_m}\|_{L^2} \lesssim 2^{-l/2} \| v_m \|_{Y_\lambda^\ast}.
\end{aligned}
\label{in:highFREQ}
\end{equation}
The inequalities \eqref{in:lowFREQ} and \eqref{in:highFREQ}, together with the fact that $\{ v_m : m \in \Z \}$ is bounded in $Y^\ast_\lambda$, yields \eqref{in:boundedl1}.

It remains to prove that
\[\lim_{m \to \infty} \|P_l (\chi v_m)\|_{L^2} = 0. \]
We will show this using the DCT again. Start by checking the 
point-wise convergence:
\[P_l (\chi v_m)(x) = \frac{2^{ld}}{(2\pi)^{d/2}} \langle v_m, \chi \widehat{\psi} ( 2^l (x - \centerdot)) \rangle\]
where $\langle \centerdot, \centerdot \rangle $ 
stands for the duality pairing between $Y^\ast_\lambda$ and $Y_\lambda$, and $\psi$ denotes the base function used to construct the 
Littlewood--Paley projectors. Since $\chi \widehat{\psi} ( 2^l (x - \centerdot))$ belongs to $Y_\lambda$ for all $x\in \R^d$, the 
convergence \eqref{id:weak_star_convergence} implies that
\[\lim_{m\to \infty} P_l (\chi v_m)(x) = 0 \]
for all $x \in \R^d$. Continue with the domination:
\[|P_l (\chi v_m)(x)| \lesssim 2^{ld} \| v_m \|_{Y_\lambda^\ast} \|\chi \widehat{\psi} ( 2^l (x - \centerdot))  \|_{Y_\lambda}. \]
Note that, since $\{ v_m : m \in \Z \}$ is bounded in $Y^\ast_\lambda$, it is enough to see that the function $x \mapsto \|\chi \widehat{\psi} ( 2^l (x - \centerdot))  \|_{Y_\lambda} $ belongs to $L^2(\R^d)$. We finish the proof of this lemma showing that this is the case. By the lemma \ref{lem:L2*embedding}, and then using Cauchy--Schwarz, we know that
\begin{align*}
\|\chi \widehat{\psi} ( 2^l (x - \centerdot)) \|_{Y_\lambda}
& \lesssim \lambda^{-1/4} \sum_{j \in \N_0} 2^{j/2} \| \chi \widehat{\psi} ( 2^l (x - \centerdot)) \|_{L^2(D_j)} \\
&\lesssim \| (\sum_{j \in \N_0} 2^{cj} \mathbf{1}_{D_j} )^{1/2} \chi \widehat{\psi} ( 2^l (x - \centerdot)) \|_{L^2}
\end{align*}
where $c$ is a constant so that  $c > 1$. Consequently,
\begin{align*}
\int_{\R^d} \|\chi \widehat{\psi} ( 2^l (x - \centerdot)) \|_{Y_\lambda}^2 \, \dd x &\lesssim \int_{\R^d} \| (\sum_{j \in \N_0} 2^{cj} \mathbf{1}_{D_j} )^{1/2} \chi \widehat{\psi} ( 2^l (x - \centerdot)) \|_{L^2}^2 \, \dd x\\
& = \| ( \sum_{j \in \N_0} 2^{cj} |\chi|^2 \mathbf{1}_{D_j} ) \ast |\widehat{\psi} ( 2^l \centerdot)|^2 \|_{L^1}\\
& \leq \| \sum_{j \in \N_0} 2^{cj} |\chi|^2 \mathbf{1}_{D_j} \|_{L^1}  \| \widehat{\psi} ( 2^l \centerdot) \|_{L^2}^2.  
\end{align*}
Since $\chi$ and $\psi$ are in $\mathcal{S}(\R^d)$, the right-hand 
side of the previous chains of inequalities is bounded, which 
concludes the proof of this lemma.
\end{proof}

\subsection{Other estimates}\label{sec:others} In this section we state and derive several consequences from the embeddings and inequalities proved in the sections \ref{sec:refined} and \ref{sec:classical}. In particular, \eqref{in:HT_localization}, \eqref{in:H} and \eqref{in:KRS_CGO}, beside a scale invariant version of the Sylvester--Uhlmann inequality.

\begin{lemma}\label{lem:Yast-Bourgain_spaces}\sl Whenever $d \geq 3 $, there exists a constant $C>0$ depending on $d$ so that
\[\| f \|_{Y_\lambda^\ast} \leq C \| f \|_{\dot{Y}_\tau^{1/2}}\]
for all $f \in \mathcal{S}(\R^d)$ and $\lambda = \tau^2$.
\end{lemma}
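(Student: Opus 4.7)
The plan is to decompose $\|f\|_{Y^\ast_\lambda}^{2}$ into its three defining pieces---low frequency ($P_{<I}$), critical frequency ($k\in I$, where $2^{k}\simeq\tau$), and high frequency ($k>k_\lambda+1$)---and to bound each piece separately by $\|f\|^{2}_{\dot Y^{1/2}_\tau}$.

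For the low- and high-frequency contributions I would use the pointwise comparison of symbols
\[
m_\lambda(\xi) = \big|\tau^{2}-|\xi|^{2}\big| \leq \sqrt{(\tau^{2}-|\xi|^{2})^{2}+4\tau^{2}\xi_d^{2}} = |q_\tau(\xi)|.
\]
Since $|\widehat{P_{<I}f}|\leq|\widehat{f}|$ and the Fourier supports of $\{\widehat{P_k f}\}_{k>k_\lambda+1}$ have finite overlap, this immediately yields
\[
\|m_\lambda^{1/2}\widehat{P_{<I}f}\|_{L^{2}}^{2} + \sum_{k>k_\lambda+1}\|m_\lambda^{1/2}\widehat{P_k f}\|_{L^{2}}^{2} \lesssim \big\||q_\tau|^{1/2}\widehat{f}\big\|_{L^{2}}^{2} = \|f\|^{2}_{\dot Y^{1/2}_\tau}.
\]

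The critical-frequency piece requires the non-trivial inequality
\[
\tau^{1/2}\vvvert g \vvvert_{\ast} \lesssim \big\||q_\tau|^{1/2}\widehat{g}\big\|_{L^{2}},
\]
valid for every $g$ with $\widehat g$ supported in $\{|\xi|\simeq\tau\}$---a Bourgain-weighted analog of the Agmon--H\"ormander trace inequality. To establish it I would split $\widehat g$ into a piece supported where $|q_\tau(\xi)|\gtrsim\tau$---handled directly by Plancherel's identity together with Bernstein's inequality on each dyadic annulus $D_j$---and a piece concentrated in a $\tau^{-1}$-neighbourhood of the zero set $\Sigma_\tau=\{|\xi|=\tau,\,\xi_d=0\}$ of $q_\tau$, to be controlled by a trace/extension argument for the codimension-two sphere $\Sigma_\tau$ combined with a stationary-phase bound on its surface-measure Fourier transform.

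The hard part will be this near-$\Sigma_\tau$ estimate. The standard Agmon--H\"ormander argument exploits the codimension-one sphere $\{|\xi|=\tau\}$ with non-vanishing Gaussian curvature, giving uniform decay $|x|^{-(d-1)/2}$ of the corresponding surface-measure Fourier transform. By contrast, $\Sigma_\tau$ has codimension two and lies in the hyperplane $\xi_d=0$, so the Fourier transform of its surface measure does not decay along the $e_d$-direction. The imaginary part $2\tau\xi_d$ of $q_\tau$ must be used to recover the missing $x_d$-decay, essentially through integration by parts, and the assumption $d\ge 3$ ensures that $\Sigma_\tau$ has positive dimension and sufficient tangential curvature. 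Combining the two mechanisms gives the required $2^{-j/2}$ gain uniformly in $j\in\N_0$; summing over the four $k\in I$ together with the non-critical bound then closes the proof.
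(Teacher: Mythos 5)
Your reduction is the right one and coincides with the paper's: the low- and high-frequency blocks follow at once from the pointwise bound $m_\lambda(\xi)\leq |q_\tau(\xi)|$, and the whole lemma hinges on the critical-frequency inequality $\tau^{1/2}\vvvert P_k f\vvvert_\ast \lesssim \| |q_\tau|^{1/2}\widehat{P_k f}\|_{L^2}$ for $k\in I$. The gap is that you do not prove this inequality: you identify the obstruction (no decay in the $e_d$-direction of the Fourier transform of the surface measure of the codimension-two zero set $\Sigma_\tau$) and assert that the imaginary part of $q_\tau$ ``must be used to recover the missing $x_d$-decay, essentially through integration by parts,'' but no such argument is carried out, and this is precisely the entire content of the lemma. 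Moreover, the route you gesture at is harder than necessary and not obviously viable: the region $\{|q_\tau|\lesssim\tau\}\cap\{|\xi|\simeq\tau\}$ is an $O(1)$-thick slab around $\Sigma_\tau$, not a hypersurface, so there is no single surface measure to which a stationary-phase bound applies; one has to foliate this set and quantify how the weight $|q_\tau|^{-1}$ integrates across the foliation, and it is that explicit computation --- not an integration by parts --- where $\Im q_\tau = 2\tau\xi_d$ does its work.

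For comparison, the paper's proof of the critical-frequency bound sidesteps the $x_d$-decay issue entirely: it enlarges $D_j$ to the slab $\{x : |x^\prime|\leq 2^j\}$, so only the localization in $x^\prime$ is retained, and Plancherel's identity is used in the $x_d$-variable. Writing $\widehat{P_k f} = |q_\tau|^{-1/2}\widehat{P_k g}$, passing to polar coordinates $\xi^\prime = \rho\theta$, and applying the extension (Agmon--H\"ormander) form of the trace theorem on each sphere $\rho\,\mathbb{S}^{d-2}\subset\R^{d-1}$ --- this is exactly where $d\geq 3$ enters, and no curvature of $\Sigma_\tau$ is invoked --- reduces matters, after Minkowski's inequality and Cauchy--Schwarz in $\rho$, to the elementary bound
\[
\sup_{|\xi_d|\leq 2^{k+1}} \int_0^{2^{k+1}} |q_\tau(\rho\theta,\xi_d)|^{-1}\, \dd\rho \lesssim \tau^{-1},
\]
which is the precise quantitative role played by the imaginary part of $q_\tau$. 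Your proposal contains no substitute for this chain of steps, so the key estimate remains unestablished.
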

\begin{proof}
The fact that $m_\lambda(\xi) \leq |q_\tau(\xi)|$ for all $\xi \in \R^d$ implies that
\[\|f\|_{Y_\lambda^\ast}^2 \leq \||q_\tau|^{1/2}\widehat{P_{<I} f}\|_{L^2}^2 + \sum_{k\in I} \lambda^{1/2} \vvvert P_k f \vvvert_\ast^2 +\sum_{k > k_\lambda + 1}\||q_\tau|^{1/2}\widehat{P_k f}\|_{L^2}^2. \]
Thus, if we prove that for $k \in I$ we have
\begin{equation}
\tau^{1/2} \vvvert P_k f \vvvert_\ast \lesssim \||q_\tau|^{1/2}\widehat{P_k f}\|_{L^2},
\label{in:critical_frequencies}
\end{equation}
then the result follows since there exists a constant $c > 0$ so that
\[|\widehat{P_{<I} f}(\xi)|^2 + \sum_{k \geq k_\lambda - 2} |\widehat{P_k f}(\xi)|^2 \leq c |\widehat{f}(\xi)|^2, \qquad \forall \xi \in \R^d. \]
Last inequality is a known property of the Littlewood--Paley 
projectors. To finish the proof of this lemma, we show that 
\eqref{in:critical_frequencies} holds. Let $g$ be defined by
\begin{equation}
\widehat{g}(\xi) = |q_\tau(\xi)|^{1/2} \widehat{f}(\xi),\, \forall \xi \in \R^d.
\label{id:aux_g}
\end{equation}
Thus, using the inversion formula of the Fourier transform and 
changing variables to $\rho = |\xi^\prime|$ and
$ \theta = \xi^\prime /|\xi^\prime| $ with
$\xi = (\xi^\prime, \xi_d) \in \R^{d-1} \times \R$, we have that
\begin{equation}
P_k f (x) = \frac{1}{(2\pi)^{d/2}} \int_\R e^{i x_d \xi_d} \int_0^\infty \int_{\mathbb{S}^{d - 2}} e^{i\rho x^\prime \cdot \theta} \frac{\widehat{P_kg}(\rho\theta, \xi_d)}{|q_\tau (\rho \theta, \xi_d)|^{1/2}} \, \dd S(\theta) \rho^{d - 2} \, \dd \rho \, \dd \xi_d,
\label{id:pkf}
\end{equation}
where $\dd S$ denotes the volume form on $\mathbb{S}^{d - 2}$. Note 
that, extending the integration from $D_j$ to $\{ x\in \R^d: |x^\prime| \leq 2^j\}$, applying Plancherel's identity in the variable $x_d$ and then Minkowski's inequality, we have
\begin{equation}
\begin{aligned}
& 2^{-j/2} \| P_k f \|_{L^2(D_j)} \\
& \, \lesssim 2^{-j/2} \Big( \int_{B_j^\prime} \big\| \int_0^\infty \int_{\mathbb{S}^{d - 2}} e^{i\rho x^\prime \cdot \theta} \frac{\widehat{P_kg}(\rho\theta, \centerdot)}{|q_\tau (\rho \theta, \centerdot)|^{1/2}} \, \dd S(\theta) \rho^{d - 2} \, \dd \rho  \big\|_{L^2(\R)}^2 \, \dd x^\prime \Big)^\frac{1}{2}\\
& \, \lesssim 2^{-j/2} \Big\| \int_0^\infty \Big( \int_{B_j^\prime} \big| \int_{\mathbb{S}^{d - 2}} e^{i\rho x^\prime \cdot \theta} \frac{\widehat{P_kg}(\rho\theta, \centerdot)}{|q_\tau (\rho \theta, \centerdot)|^{1/2}} \, \dd S(\theta) \big|^2 \, \dd x^\prime \Big)^\frac{1}{2}  \rho^{d - 2} \, \dd \rho \Big\|_{L^2(\R)},
\end{aligned}
\label{in:pre_EXTtrace}
\end{equation}
where $B_j^\prime = \{ x^\prime \in \R^{d - 1} : |x^\prime| < 2^j \} $. Next we change variables $\rho x^\prime = y^\prime$ so that
\begin{align*}
2^{-j/2} \Big( \int_{B_j^\prime} \big| \int_{\mathbb{S}^{d - 2}} & e^{i\rho x^\prime \cdot \theta}  \frac{\widehat{P_kg}(\rho\theta, \centerdot)}{|q_\tau (\rho \theta, \centerdot)|^{1/2}} \, \dd S(\theta) \big|^2 \, \dd x^\prime \Big)^\frac{1}{2} \\
& = \rho^{1 - d/2} (\rho 2^j)^{-1/2} \Big( \int_{\rho B_j^\prime} \big| \int_{\mathbb{S}^{d - 2}} e^{i y^\prime \cdot \theta} \frac{\widehat{P_kg}(\rho\theta, \centerdot)}{|q_\tau (\rho \theta, \centerdot)|^{1/2}} \, \dd S(\theta) \big|^2 \, \dd y^\prime \Big)^\frac{1}{2}.
\end{align*}
Applying the extension version of the trace theorem ---Theorem 7.1.26 in \cite{zbMATH01950198} valid here for $d \geq 3$--- we have that the right-hand side of the previous identity can be bounded so that the inequality \eqref{in:pre_EXTtrace} becomes
\begin{equation}
2^{-\frac{j}{2}} \| P_k f \|_{L^2(D_j)} \lesssim \Big\| \int_0^\infty \rho^{1 - \frac{d}{2}} \Big( \int_{\mathbb{S}^{d - 2}}  \frac{|\widehat{P_kg}(\rho\theta, \centerdot)|^2}{|q_\tau (\rho \theta, \centerdot)|} \, \dd S(\theta) \Big)^\frac{1}{2}  \rho^{d - 2} \, \dd \rho \Big\|_{L^2(\R)}.
\label{in:post-EXTtrace}
\end{equation}
Note that $|q_\tau (\rho \theta, \xi_d)|^2 = |\tau^2 - \rho^2 - \xi_d^2|^2 + |2 \tau \xi_d|^2$, which does not depend on $\theta$. 
Hence, by the Cauchy--Schwarz inequality applied to the integration 
in $\rho$, we have that the right-hand side of the previous 
inequality is bounded by a constant multiple of
\[\sup_{|\xi_d| \leq 2^{k + 1}} \Big( \int_0^{2^{k+1}} |q_\tau (\rho \theta, \xi_d)|^{-1} \, \dd \rho \Big)^\frac{1}{2} \Big\| \int_0^\infty \int_{\mathbb{S}^{d - 2}}  |\widehat{P_kg}(\rho\theta, \centerdot)|^2 \, \dd S(\theta) \rho^{d - 2} \, \dd \rho \Big\|_{L^2(\R)}. \]
Since $k \in I$, one can check that this term is bounded by $\tau^{-1/2} \| \widehat{P_kg} \|_{L^2}$. Thus, the inequality \eqref{in:post-EXTtrace} becomes
\[ 2^{-j/2} \| P_k f \|_{L^2(D_j)} \lesssim \tau^{-1/2} \| |q_\tau|^{1/2} \widehat{P_kf} \|_{L^2}. \]
Taking supremum in $j\in \N_0$ we obtain \eqref{in:critical_frequencies}. 
\end{proof}

\begin{corollary}[Haberman--Tataru] \sl Whenever $d \geq 3 $, there exists a constant $C>0$ depending on $d$ so that, if $\chi \in \mathcal{S}(\R^d)$, then
\[\tau^{1/2} \| \chi f \|_{L^2} \leq C \| f \|_{\dot{Y}_\tau^{1/2}}\]
for all $f \in \mathcal{S}(\R^d)$.
\end{corollary}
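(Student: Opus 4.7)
The plan is to chain together three estimates that have already been established in the excerpt, so that the Haberman--Tataru localization inequality follows essentially for free, without having to redo the Fourier-analytic work done by Haberman and Tataru.

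First, I would reduce the right-hand side. By the preceding lemma \ref{lem:Yast-Bourgain_spaces} one has, for $\lambda = \tau^2$ and $d\geq 3$,
\[ \| f \|_{Y_\lambda^\ast} \lesssim \| f \|_{\dot{Y}_\tau^{1/2}}, \]
so it suffices to prove $\tau^{1/2}\|\chi f\|_{L^2} \lesssim \|f\|_{Y^\ast_\lambda}$. Next, I would appeal to the classical-embedding lemma \ref{lem:L2embedding}, which (recalling $\tau^{1/2} = \lambda^{1/4}$) yields
\[ \tau^{1/2} \vvvert f \vvvert_\ast = \lambda^{1/4} \vvvert f \vvvert_\ast \lesssim \| f \|_{Y_\lambda^\ast}. \]
Thus everything reduces to controlling $\|\chi f\|_{L^2}$ by the $\vvvert\,\cdot\,\vvvert_\ast$-norm with a constant depending only on $\chi$.

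For this last step I would reproduce verbatim the argument used in \eqref{in:multiplication} inside the proof of the trace theorem: decomposing the $L^2$-norm over the dyadic annuli $D_j$ and pulling out $\|\chi\|_{L^\infty(D_j)}$,
\[ \|\chi f\|_{L^2}^2 = \sum_{j\in\N_0} \|\chi f\|_{L^2(D_j)}^2 \leq \Big(\sum_{j\in\N_0} 2^{j}\|\chi\|_{L^\infty(D_j)}^2\Big) \vvvert f \vvvert_\ast^2. \]
Because $\chi \in \mathcal{S}(\R^d)$, the weights $\|\chi\|_{L^\infty(D_j)}$ decay faster than any power of $2^{-j}$, so the series in parentheses is bounded by a constant $C_\chi$ depending only on $\chi$ (and $d$). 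Concatenating the three bounds gives
\[ \tau^{1/2} \|\chi f\|_{L^2} \leq C_\chi\, \tau^{1/2} \vvvert f \vvvert_\ast \lesssim C_\chi \|f\|_{Y^\ast_\lambda} \lesssim C_\chi \|f\|_{\dot{Y}^{1/2}_\tau}, \]
which is the claim.

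There is no real obstacle: the work has already been done in lemma \ref{lem:Yast-Bourgain_spaces} (which absorbed the curvature-type estimate for the critical frequencies) and in the Schwartz-decay computation carried out in the trace theorem. The only thing to make sure of is that the implicit constant in the final inequality depends on $\chi$ only through Schwartz seminorms and on $d$, and in particular is independent of $\tau$; this is automatic from the three inputs above. The statement could also be recovered, with an identical structure, from theorem \ref{th:embedding} applied with $p=2$ replaced by the $\vvvert\,\cdot\,\vvvert_\ast$ component, but going through $Y^\ast_\lambda$ is more economical.
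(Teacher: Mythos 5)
Your proposal is correct and is exactly the paper's own proof: the corollary is stated there as a consequence of the inequality \eqref{in:multiplication}, the lemma \ref{lem:L2embedding}, and the lemma \ref{lem:Yast-Bourgain_spaces}, chained in precisely the order you describe. Nothing to add.
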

\begin{proof}
This is a consequence of the inequality \eqref{in:multiplication} and the lemmas \ref{lem:L2embedding} and \ref{lem:Yast-Bourgain_spaces}.
\end{proof}

\begin{lemma}\label{lem:Y-Bourgain_spaces}\sl Whenever $d \geq 3 $, there exists a constant $C>0$ depending on $d$ so that
\[\| f \|_{\dot{Y}_\tau^{-1/2}} \leq C \| f \|_{Y_\lambda}\]
for all $f \in \mathcal{S}(\R^d)$ and $\lambda = \tau^2$.
\end{lemma}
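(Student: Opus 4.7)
The plan is a straightforward duality bootstrap from Lemma \ref{lem:Yast-Bourgain_spaces}, which is the companion inequality on the dual scale. Concretely, the idea is to identify both norms through a common pairing and then to play the $Y_\lambda^\ast$--$\dot{Y}_\tau^{1/2}$ inequality against the $Y_\lambda$--$Y_\lambda^\ast$ duality.

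First I would express the $\dot{Y}_\tau^{-1/2}$-norm via Plancherel: since $q_\tau$ vanishes only on a set of measure zero, the standard $L^2$-pairing $\langle f,g\rangle = \int_{\R^d} f\overline{g}$ realizes $(\dot{Y}_\tau^{-1/2})^\ast \simeq \dot{Y}_\tau^{1/2}$, and $\mathcal{S}(\R^d)$ is norm-dense in $\dot{Y}_\tau^{1/2}$, so
\[
\|f\|_{\dot{Y}_\tau^{-1/2}} = \sup_{g \in \mathcal{S}(\R^d)\setminus\{0\}} \frac{|\langle f, g\rangle|}{\|g\|_{\dot{Y}_\tau^{1/2}}}.
\]
Next, for each such test function $g$, Lemma \ref{lem:Yast-Bourgain_spaces} yields $\|g\|_{Y_\lambda^\ast} \leq C\|g\|_{\dot{Y}_\tau^{1/2}}$, and the $Y_\lambda$--$Y_\lambda^\ast$ duality (the analogue of Proposition \ref{prop:Xlambda} at the $Y$-level, proved in the appendix) gives $|\langle f,g\rangle| \leq \|f\|_{Y_\lambda}\|g\|_{Y_\lambda^\ast}$. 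Chaining these two bounds yields
\[
\frac{|\langle f,g\rangle|}{\|g\|_{\dot{Y}_\tau^{1/2}}} \leq C \|f\|_{Y_\lambda},
\]
and taking the supremum over $g$ delivers the claim.

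The only point requiring care is that the two duality pairings ($Y_\lambda$ with $Y_\lambda^\ast$, and $\dot{Y}_\tau^{-1/2}$ with $\dot{Y}_\tau^{1/2}$) must be realized through the same integration pairing so that the inequalities may be chained against a common dense test class. For $Y_\lambda$ this density is Proposition \ref{prop:density}, and for $\dot{Y}_\tau^{1/2}$ it is standard once the measure-zero degeneracy set of $q_\tau$ is discarded. This is not a genuine obstacle, merely a bookkeeping step; beyond it, no new analytic input is required since all the heavy lifting is already encoded in Lemma \ref{lem:Yast-Bourgain_spaces}.
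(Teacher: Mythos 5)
Your argument is exactly the ``standard duality argument'' the paper invokes: realize $\|f\|_{\dot{Y}_\tau^{-1/2}}$ as a supremum over Schwartz test functions $g$ under the $L^2$ pairing, bound $|\langle f,g\rangle|\leq\|f\|_{Y_\lambda}\|g\|_{Y_\lambda^\ast}$ via the $Y_\lambda$--$Y_\lambda^\ast$ duality from the appendix, and chain with Lemma \ref{lem:Yast-Bourgain_spaces}. This is correct and matches the paper's (one-line) proof, with the bookkeeping about common pairings and density handled appropriately.
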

\begin{proof}
It follows from the lemma \ref{lem:Yast-Bourgain_spaces} by a standard duality argument.
\end{proof}

\begin{corollary}[Sylvester--Uhlmann]\sl Whenever $d \geq 3 $, there exists a constant $C>0$ depending on $d$ so that
\[ \vvvert (\Delta + 2 \tau \partial_{x_d} + \tau^2)^{-1} f \vvvert_\ast \leq \frac{C}{\tau} \vvvert f \vvvert\]
for all $f \in \mathcal{S}(\R^d)$.
\end{corollary}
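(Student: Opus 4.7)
The plan is to chain together the isometry \eqref{id:isometry} with the embeddings already established in this section. Set $\lambda = \tau^2$ and let $u = (\Delta + 2\tau\partial_{x_d} + \tau^2)^{-1} f$. The idea is to climb from the Agmon--Hörmander-style norm $\vvvert u \vvvert_\ast$ up into the Bourgain scale $\dot{Y}^{1/2}_\tau$, use the conjugated Laplacian as an isometry onto $\dot{Y}^{-1/2}_\tau$, and then descend back down to $\vvvert f \vvvert$ on the data side.

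First, I would apply the $L^2$-embedding (Lemma \ref{lem:L2embedding}) to obtain
\[\lambda^{1/4} \vvvert u \vvvert_\ast \lesssim \| u \|_{Y^\ast_\lambda},\]
and then the comparison of $Y^\ast_\lambda$ with the Bourgain space (Lemma \ref{lem:Yast-Bourgain_spaces}) to get $\| u \|_{Y^\ast_\lambda} \lesssim \| u \|_{\dot{Y}^{1/2}_\tau}$. This takes the output norm into a space tailored to the conjugated Laplacian. Now the isometry property \eqref{id:isometry} gives
\[\| u \|_{\dot{Y}^{1/2}_\tau} = \| (\Delta + 2\tau\partial_{x_d} + \tau^2)^{-1} f \|_{\dot{Y}^{1/2}_\tau} = \| f \|_{\dot{Y}^{-1/2}_\tau},\]
which is the only place where the structure of the conjugated Laplacian is actually used.

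To finish, I would descend symmetrically on the data side: Lemma \ref{lem:Y-Bourgain_spaces} yields $\| f \|_{\dot{Y}^{-1/2}_\tau} \lesssim \| f \|_{Y_\lambda}$, and then Lemma \ref{lem:L2*embedding} gives $\| f \|_{Y_\lambda} \lesssim \lambda^{-1/4} \vvvert f \vvvert$. Concatenating these inequalities yields
\[\lambda^{1/4} \vvvert u \vvvert_\ast \lesssim \lambda^{-1/4} \vvvert f \vvvert,\]
and dividing through by $\lambda^{1/4} = \tau^{1/2}$ produces exactly $\vvvert u \vvvert_\ast \lesssim \tau^{-1} \vvvert f \vvvert$, which is the stated inequality.

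There is no real obstacle here, since every ingredient has been proved earlier in the section; the only minor point to check is that the identity \eqref{id:isometry} can legitimately be applied to Schwartz $f$ (so that $u$ is well-defined and both sides are finite), but this is standard because $\mathcal{S}(\R^d)$ sits inside $\dot{Y}^{-1/2}_\tau$ and the multiplier $1/q_\tau$ is integrable away from the characteristic set. If one wished to avoid even this mild subtlety, a density argument together with the continuous chain of estimates above would settle the matter.
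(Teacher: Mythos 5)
Your argument is exactly the paper's proof: the same chain of Lemma \ref{lem:L2embedding}, Lemma \ref{lem:Yast-Bourgain_spaces}, the isometry \eqref{id:isometry}, Lemma \ref{lem:Y-Bourgain_spaces} and Lemma \ref{lem:L2*embedding}, with $\lambda = \tau^2$ giving the factor $\tau^{-1}$. It is correct as written.
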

\begin{proof}
It follows from the identity \eqref{id:isometry} and the lemmas \ref{lem:Yast-Bourgain_spaces}, \ref{lem:Y-Bourgain_spaces}, \ref{lem:L2embedding} and \ref{lem:L2*embedding}.
\end{proof}

\begin{lemma}\label{lem:Zast-Bourgain_spaces}\sl Whenever $d \geq 3 $, there exists a constant $C>0$ depending on $d$ so that
\[\| f \|_{Z_{\lambda, p_d}^\ast} \leq C \| f \|_{\dot{Y}_\tau^{1/2}}\]
for all $f \in \mathcal{S}(\R^d)$ and $\lambda = \tau^2$.
\end{lemma}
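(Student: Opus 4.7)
The plan is to dominate each of the three groups of terms appearing in the definition of $\| f \|_{Z_{\lambda, p_d}^\ast}^2$ by a piece of $\| f \|_{\dot{Y}_\tau^{1/2}}^2 = \| |q_\tau|^{1/2} \widehat{f} \|_{L^2}^2$. The starting point is the pointwise comparison
\[
|q_\tau(\xi)|^2 = (\tau^2 - |\xi|^2)^2 + 4 \tau^2 \xi_d^2 \geq (\lambda - |\xi|^2)^2 = m_\lambda(\xi)^2,
\]
valid for every $\xi \in \R^d$ and $\lambda = \tau^2$. Hence $m_\lambda(\xi)^{1/2} \leq |q_\tau(\xi)|^{1/2}$ pointwise.

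First I would handle the non-critical frequency pieces, which are essentially trivial. Because the base function $\phi$ used to build the Littlewood--Paley projectors takes values in $[0,1]$, one has $|\widehat{P_{<I} f}(\xi)| \leq |\widehat{f}(\xi)|$ and $|\widehat{P_k f}(\xi)| \leq |\widehat{f}(\xi)|$ pointwise. Combining this with the pointwise comparison above and the finite overlap of the supports of $\{\widehat{P_k f}\}_{k > k_\lambda + 1}$ yields
\[
\|m_\lambda^{1/2}\widehat{P_{<I} f}\|_{L^2}^2 + \sum_{k > k_\lambda + 1}\|m_\lambda^{1/2}\widehat{P_k f}\|_{L^2}^2 \lesssim \||q_\tau|^{1/2} \widehat{f}\|_{L^2}^2 = \|f\|_{\dot{Y}_\tau^{1/2}}^2.
\]

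The main content of the lemma lies in the critical frequencies $k \in I$, for which the exponent $\lambda^{d(1/p_d - 1/p_d)} = 1$ reduces the contribution to $\sum_{k \in I} \|P_k f\|_{L^{p_d}}^2$. Here I would apply Haberman's embedding \eqref{in:H} to each $P_k f$ to obtain
\[
\|P_k f\|_{L^{p_d}} \lesssim \|P_k f\|_{\dot{Y}_\tau^{1/2}} = \||q_\tau|^{1/2}\widehat{P_k f}\|_{L^2}.
\]
Since $I$ contains only four indices, summing over $k \in I$ together with the finite overlap of the frequency supports gives
\[
\sum_{k \in I} \|P_k f\|_{L^{p_d}}^2 \lesssim \sum_{k \in I} \||q_\tau|^{1/2}\widehat{P_k f}\|_{L^2}^2 \lesssim \||q_\tau|^{1/2}\widehat{f}\|_{L^2}^2 = \|f\|_{\dot{Y}_\tau^{1/2}}^2.
\]

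Adding the three pieces yields the claimed bound $\|f\|_{Z_{\lambda, p_d}^\ast} \leq C \|f\|_{\dot{Y}_\tau^{1/2}}$. The proof is essentially routine once \eqref{in:H} is available; the only conceptual ingredient is Haberman's embedding at the critical scale, while all other terms reduce to the pointwise inequality $m_\lambda \leq |q_\tau|$. There is no serious obstacle: the main thing to watch is the book-keeping to ensure that the Littlewood--Paley projections do not create factors that depend on $\lambda$ beyond those already absorbed in the weight $|q_\tau|^{1/2}$.
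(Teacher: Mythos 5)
Your reduction is the right one, and it matches the paper's: the low and high frequency pieces follow from the pointwise bound $m_\lambda(\xi)\leq |q_\tau(\xi)|$ together with the almost-orthogonality of the Littlewood--Paley pieces, and everything hinges on proving
\[
\|P_k f\|_{L^{p_d}} \lesssim \||q_\tau|^{1/2}\widehat{P_k f}\|_{L^2}, \qquad k\in I .
\]
The problem is how you dispose of this critical-frequency estimate. You invoke Haberman's embedding \eqref{in:H}, but in this paper that inequality is the corollary \ref{cor:H}, which is \emph{deduced from} the present lemma (together with the lemma \ref{lem:Lpembedding}). So, spliced into the paper, your argument is circular. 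Even read as a free-standing proof, it begs the question: the only non-trivial part of \eqref{in:H} is precisely its content at frequencies $|\xi|\sim\tau$ (away from the characteristic set of $q_\tau$ one has $|q_\tau|\gtrsim|\xi|^2+\tau^2$ and the bound is an easy Sobolev/Bernstein estimate), and that is exactly the displayed inequality you are trying to prove. One of the stated purposes of this section is to \emph{re-derive} Haberman's inequality from the new framework, so citing it here defeats the point.

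What is missing is an actual proof of the critical-frequency bound. The paper does this by setting $\widehat{g}=|q_\tau|^{1/2}\widehat{f}$, writing $P_k f$ in polar coordinates in the first $d-1$ frequency variables, applying Bernstein and Plancherel in $x_d$ and Minkowski's inequality in $\rho$, then the extension (adjoint restriction) form of the Tomas--Stein theorem on $\mathbb{S}^{d-2}$ ---this is where $d\geq 3$ and the identity $p_d=q_{d-1}$ enter--- and finally a Cauchy--Schwarz in $\rho$ using that $|q_\tau(\rho\theta,\xi_d)|$ is independent of $\theta$ and that
\[
\sup_{|\xi_d|\leq 2^{k+1}}\int_0^{2^{k+1}}\frac{\rho^{2/p_d}}{|q_\tau(\rho\theta,\xi_d)|}\,\dd\rho \lesssim \tau^{2/p_d-1}
\]
for $k\in I$. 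You would need to supply an argument of this kind (or an independent proof of \eqref{in:H} not routed through this lemma) for your proposal to be complete.
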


\begin{proof}
By the same argument as in the proof of the lemma \ref{lem:Yast-Bourgain_spaces}, it is enough to show that for $k \in I$ we have
\begin{equation}
\|P_k f\|_{L^{p_d}} \lesssim \||q_\tau|^{1/2}\widehat{P_k f}\|_{L^2}.
\label{in:critical}
\end{equation}
Let $g$ be as in \eqref{id:aux_g}, and write $P_k f$ as in \eqref{id:pkf}.
Applying Bernstein's and Plancherel's identities in the variable $x_d$ and after this Minkowski's inequality, we have
\begin{equation}
\begin{aligned}
& 2^{-k(\frac{1}{2} - \frac{1}{p_d})} \|P_k f\|_{L^{p_d}(\R^d)} \\
& \quad \lesssim \Big( \int_{\R^{d - 1}} \big\| \int_0^\infty \int_{\mathbb{S}^{d - 2}} e^{i\rho x^\prime \cdot \theta} \frac{\widehat{P_kg}(\rho\theta, \centerdot)}{|q_\tau (\rho \theta, \centerdot)|^\frac{1}{2}} \, \dd S(\theta) \rho^{d - 2} \, \dd \rho  \big\|_{L^2(\R)}^{p_d} \, \dd x^\prime \Big)^\frac{1}{p_d}\\
& \quad \lesssim \Big\| \int_0^\infty \Big( \int_{\R^{d - 1}} \big| \int_{\mathbb{S}^{d - 2}} e^{i\rho x^\prime \cdot \theta} \frac{\widehat{P_kg}(\rho\theta, \centerdot)}{|q_\tau (\rho \theta, \centerdot)|^{1/2}} \, \dd S(\theta) \big|^{p_d} \, \dd x^\prime \Big)^\frac{1}{p_d}  \rho^{d - 2} \, \dd \rho \Big\|_{L^2(\R)}.
\end{aligned}
\label{in:Zpre_EXTtrace}
\end{equation}
As in the proof of the lemma \ref{lem:Yast-Bourgain_spaces}, we change variables $\rho x^\prime = y^\prime$ so that
\begin{align*}
\Big( \int_{\R^{d - 1}} \big| \int_{\mathbb{S}^{d - 2}} & e^{i\rho x^\prime \cdot \theta} \frac{\widehat{P_kg}(\rho\theta, \centerdot)}{|q_\tau (\rho \theta, \centerdot)|^{1/2}} \, \dd S(\theta) \big|^{p_d} \, \dd x^\prime \Big)^\frac{1}{p_d} \\
& = \rho^{(1 - d)/p_d} \Big( \int_{\R^{d - 1}} \big| \int_{\mathbb{S}^{d - 2}} e^{iy^\prime \cdot \theta} \frac{\widehat{P_kg}(\rho\theta, \centerdot)}{|q_\tau (\rho \theta, \centerdot)|^{1/2}} \, \dd S(\theta) \big|^{p_d} \, \dd y^\prime \Big)^\frac{1}{q_d}.
\end{align*}
Applying the extension version of the Tomas--Stein theorem ---valid here for $d \geq 3$ since $p_d = q_{d - 1}$--- we have that the right-hand side of the previous identity can be bounded so that the inequality \eqref{in:Zpre_EXTtrace} becomes
\begin{equation*}
\|P_k f\|_{L^{p_d}(\R^d)} \lesssim 2^{k(\frac{1}{2} - \frac{1}{p_d})} \Big\| \int_0^\infty \rho^{\frac{1 - d}{p_d}} \Big( \int_{\mathbb{S}^{d - 2}} \frac{|\widehat{P_kg}(\rho\theta, \centerdot)|^2}{|q_\tau (\rho \theta, \centerdot)|} \, \dd S(\theta) \Big)^\frac{1}{2}  \rho^{d - 2} \, \dd \rho \Big\|_{L^2(\R)}.
\end{equation*}
As we noted in the proof of the lemma \ref{lem:Yast-Bourgain_spaces},  
$|q_\tau (\rho \theta, \xi_d)|$ does not depend on $\theta$, and consequently, applying the Cauchy--Schwarz inequality to the integration 
in $\rho$, we have that the norm on the right-hand side of the previous inequality is bounded by a constant multiple of
\[\sup_{|\xi_d| \leq 2^{k + 1}} \Big( \int_0^{2^{k+1}} \frac{\rho^{2/p_d}}{|q_\tau (\rho \theta, \xi_d)|} \, \dd \rho \Big)^\frac{1}{2} \Big\| \int_0^\infty \int_{\mathbb{S}^{d - 2}}  |\widehat{P_kg}(\rho\theta, \centerdot)|^2 \, \dd S(\theta) \rho^{d - 2} \, \dd \rho \Big\|_{L^2(\R)}. \]
Since $k \in I$, one can check that the first term of the previous 
product is bounded by $\tau^{1/p_d-1/2}$. 
Thus, we end up with the inequality
\[\|P_k f\|_{L^{p_d}} \lesssim 2^{k(1/2 - 1/p_d)} \tau^{1/p_d-1/2} \| |q_\tau|^{1/2} \widehat{P_kf} \|_{L^2}.  \]
Since $k \in I$ and $2^k \simeq \tau$, we get the inequality \eqref{in:critical}.
\end{proof}

\begin{corollary}[Haberman] \label{cor:H} \sl Assume $d \geq 3 $. There exists a constant $C>0$ depending on $d$ so that
\[ \| f \|_{L^{p_d}} \leq C \| f \|_{\dot{Y}_\tau^{1/2}}\]
for all $f \in \mathcal{S}(\R^d)$.
\end{corollary}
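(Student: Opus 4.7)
The plan is very short: this corollary follows by chaining two lemmas that have already been established, namely the lemma \ref{lem:Zast-Bourgain_spaces} and the lemma \ref{lem:Lpembedding}.

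First I would apply the lemma \ref{lem:Lpembedding} with the choice $p = q = p_d$ (which is admissible since $q_d \leq p_d$ and $q \leq p$). The weight $\lambda^{\frac{d}{2}(\frac{1}{p}-\frac{1}{p_d})}$ becomes $\lambda^0 = 1$, so one obtains the scale-invariant embedding
\[ \| f \|_{L^{p_d}} \lesssim \| f \|_{Z^\ast_{\lambda, p_d}} \]
for every $f \in \mathcal{S}(\R^d)$ and every $\lambda > 0$.

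Then I would invoke the lemma \ref{lem:Zast-Bourgain_spaces}, which gives
\[ \| f \|_{Z^\ast_{\lambda, p_d}} \lesssim \| f \|_{\dot{Y}^{1/2}_\tau} \]
with $\lambda = \tau^2$. Composing these two inequalities immediately yields $\| f \|_{L^{p_d}} \lesssim \| f \|_{\dot{Y}^{1/2}_\tau}$ for $d \geq 3$, which is the desired embedding.

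There is no real obstacle here; the heavy lifting (the extension form of the Tomas--Stein theorem used to control the critical frequencies, and the Littlewood--Paley decomposition together with Bernstein's inequality for the low and high frequencies) has already been carried out in the proofs of the two lemmas invoked. The only mild subtlety is that the non-critical Littlewood--Paley pieces in $Z^\ast_{\lambda, p_d}$ are measured through $L^2$ norms weighted by $m_\lambda^{1/2}$, but the lemma \ref{lem:Lpembedding} already absorbs this fact via Bernstein's inequality and the elementary bounds $m_\lambda(\xi)^{1/2} |\widehat{P_{<I} f}(\xi)| \simeq \lambda^{1/2} |\widehat{P_{<I} f}(\xi)|$ and $m_\lambda(\xi)^{1/2} |\widehat{P_k f}(\xi)| \simeq 2^k |\widehat{P_k f}(\xi)|$ for $k > k_\lambda + 1$, so nothing more needs to be done.
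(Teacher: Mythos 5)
Your proposal is correct and is precisely the paper's own argument: the authors also obtain the corollary by composing the lemma \ref{lem:Lpembedding} (with $p=q=p_d$, so the $\lambda$-weight disappears) with the lemma \ref{lem:Zast-Bourgain_spaces} at $\lambda=\tau^2$. Nothing is missing.
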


\begin{proof}
It is a consequence of the lemmas \ref{lem:Lpembedding} and \ref{lem:Zast-Bourgain_spaces}.
\end{proof}

\begin{lemma}\label{lem:Z-Bourgain_spaces}\sl Whenever $d \geq 3 $, there exists a constant $C>0$ depending on $d$ so that
\[\| f \|_{\dot{Y}_\tau^{-1/2}} \leq C \| f \|_{Z_{\lambda,p_d^\prime}}\]
for all $f \in \mathcal{S}(\R^d)$ and $\lambda = \tau^2$.
\end{lemma}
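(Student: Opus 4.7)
The plan is to derive this inequality by a standard duality argument mirroring the proof of Lemma \ref{lem:Y-Bourgain_spaces}. The key observation is that $\dot{Y}_\tau^{-1/2}$ and $\dot{Y}_\tau^{1/2}$ are dual via the pairing $\langle f, g\rangle = \int_{\R^d} \widehat{f}\,\overline{\widehat{g}}\,\dd\xi$, since the weights $|q_\tau|^{-1/2}$ and $|q_\tau|^{1/2}$ are conjugate. Likewise, by the identification of $Z_{\lambda, p_d}^\ast$ as the dual of $Z_{\lambda, p_d^\prime}$ (stated in the appendix, analogous to the identification used in Lemma \ref{lem:Lp*embedding}), we have
\[
|\langle f, g\rangle| \leq \| f \|_{Z_{\lambda, p_d^\prime}} \, \| g \|_{Z_{\lambda, p_d}^\ast}
\]
for all $f, g \in \mathcal{S}(\R^d)$.

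The first step is to write
\[
\| f \|_{\dot{Y}_\tau^{-1/2}} = \sup_{g \in \mathcal{S}(\R^d) \setminus \{0\}} \frac{|\langle f, g\rangle|}{\| g \|_{\dot{Y}_\tau^{1/2}}},
\]
which holds because $\mathcal{S}(\R^d)$ is dense in $\dot{Y}_\tau^{1/2}$, and the supremum characterizes the norm in the dual. The second step is to combine the two displayed inequalities above with Lemma \ref{lem:Zast-Bourgain_spaces}, which gives $\| g \|_{Z_{\lambda, p_d}^\ast} \lesssim \| g \|_{\dot{Y}_\tau^{1/2}}$. This yields
\[
|\langle f, g\rangle| \leq \| f \|_{Z_{\lambda, p_d^\prime}} \, \| g \|_{Z_{\lambda, p_d}^\ast} \lesssim \| f \|_{Z_{\lambda, p_d^\prime}} \, \| g \|_{\dot{Y}_\tau^{1/2}}.
\]
Dividing by $\| g \|_{\dot{Y}_\tau^{1/2}}$ and taking the supremum over $g$ delivers the desired bound.

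There is no real obstacle here; the entire argument is a Hahn--Banach-style duality transfer, exactly in the spirit of the corollary deriving \eqref{in:AH-KPV} from the refined resolvent estimate, and is essentially identical in structure to the proof of Lemma \ref{lem:Y-Bourgain_spaces}. The only point requiring some care is to confirm that the duality bracket is compatible across both pairs of spaces ($\dot{Y}_\tau^{\pm 1/2}$ and $Z_{\lambda, p_d^\prime}$ versus $Z_{\lambda, p_d}^\ast$), but this is ensured by working with Schwartz functions on both sides and invoking density.
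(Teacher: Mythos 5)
Your proposal is correct and coincides with the paper's proof, which states only that the lemma ``follows from the lemma \ref{lem:Zast-Bourgain_spaces} by a standard duality argument''; you have simply written out that duality transfer explicitly, using the pairing between $\dot{Y}_\tau^{\mp 1/2}$, the duality of $Z_{\lambda,p_d^\prime}$ with $Z_{\lambda,p_d}^\ast$ from the appendix, and the lemma \ref{lem:Zast-Bourgain_spaces}. No discrepancy to report.
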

\begin{proof}
It follows from the lemma \ref{lem:Zast-Bourgain_spaces} by a standard duality argument.
\end{proof}

\begin{corollary}[Kenig--Ruiz--Sogge] \sl Assume $d \geq 3 $. There exists a constant $C>0$ depending on $d$ so that
\[\| (\Delta + 2 \tau \partial_{x_d} + \tau^2)^{-1} f \|_{L^{p_d}} \leq C \| f \|_{L^{p_d^\prime}}\]
for all $f \in \mathcal{S}(\R^d)$.
\end{corollary}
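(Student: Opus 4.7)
The plan is to chain together the isometry property of the conjugated-Laplacian resolvent on the Bourgain scale with Haberman's embedding and the two lemmas relating the $Z_\lambda$-norms to Lebesgue norms --- all of these are already in hand, so the argument is purely a matter of composition and no substantive obstacle is expected.

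Concretely, set $u = (\Delta + 2\tau\partial_{x_d} + \tau^2)^{-1} f$. First, I apply Haberman's embedding (Corollary \ref{cor:H}) to obtain $\| u \|_{L^{p_d}} \leq C \| u \|_{\dot Y^{1/2}_\tau}$. Next, I use the isometry identity \eqref{id:isometry} with $s = 1/2$ to rewrite
\[
\| u \|_{\dot Y^{1/2}_\tau} = \| (\Delta + 2\tau\partial_{x_d} + \tau^2)^{-1} f \|_{\dot Y^{1/2}_\tau} = \| f \|_{\dot Y^{-1/2}_\tau}.
\]
Then Lemma \ref{lem:Z-Bourgain_spaces} gives $\| f \|_{\dot Y^{-1/2}_\tau} \leq C \| f \|_{Z_{\lambda, p_d^\prime}}$, with $\lambda = \tau^2$.

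Finally, I invoke Lemma \ref{lem:Lp*embedding} in the degenerate case $p^\prime = q^\prime = p_d^\prime$: since the weight $\lambda^{\frac{d}{2}(1/p^\prime - 1/p_d^\prime)}$ becomes $1$, we get $\| f \|_{Z_{\lambda, p_d^\prime}} \leq C \| f \|_{L^{p_d^\prime}}$. Composing these four inequalities yields
\[
\| (\Delta + 2\tau\partial_{x_d} + \tau^2)^{-1} f \|_{L^{p_d}} \leq C \| f \|_{L^{p_d^\prime}},
\]
with a constant depending only on $d$, which is the claim. The argument is a straightforward assembly of the previously established lemmas; the substantive work (the extension-form Tomas--Stein bound for the critical frequencies) is already absorbed into Lemma \ref{lem:Zast-Bourgain_spaces} and its dual Lemma \ref{lem:Z-Bourgain_spaces}, so no additional harmonic analysis is required here.
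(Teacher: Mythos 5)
Your proof is correct and uses exactly the same chain of ingredients as the paper's one-line proof: Haberman's embedding (Corollary \ref{cor:H}), the isometry \eqref{id:isometry} with $s=1/2$, Lemma \ref{lem:Z-Bourgain_spaces}, and Lemma \ref{lem:Lp*embedding} in the case $p^\prime = q^\prime = p_d^\prime$. You have merely spelled out the composition that the paper leaves implicit, so there is nothing to add.
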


\begin{proof}
It follows from the identity \eqref{id:isometry}, the corollary \ref{cor:H} and the lemmas \ref{lem:Z-Bourgain_spaces} and \ref{lem:Lp*embedding}.
\end{proof}

\appendix

\section{The functional analytical framework}\label{app:framework}
Here we prove the propositions \ref{prop:density}, 
\ref{prop:Xlambda} and \ref{prop:Xlambdaast} which describe 
some basic properties of the functional spaces $X_\lambda$ and
$X_\lambda^\ast$. As we see, these propositions will be immediately 
derived from some properties related to the spaces $Y_\lambda$, $Y_\lambda^\ast$, $Z_\lambda$ and $Z_\lambda^\ast$.

\begin{lemma}\label{lem:Yspaces}\sl The pair
$(Y_\lambda, \| \centerdot \|_{Y_\lambda})$
is a Banach space and its dual is 
isomorphic to $(Y_\lambda^\ast, \| \centerdot \|_{Y_\lambda^\ast})$. 
The Schwartz class $\mathcal{S}(\R^d)$ is dense in $Y_\lambda$ and 
$Y_\lambda^\ast$ with their corresponding norms.
\end{lemma}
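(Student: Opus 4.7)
The plan is to realize $Y_\lambda$ as an isometric copy of a closed subspace of an $\ell^2$-sum of standard Banach spaces glued together by the Littlewood--Paley decomposition. For each index $a\in\{-1\}\cup I\cup\{k\in\Z:k>k_\lambda+1\}$ I will introduce a component Banach space $E_a$: $E_{-1}$ and $E_k$ for $k>k_\lambda+1$ are the weighted Hilbert spaces $L^2(\R^d;m_\lambda^{-1}\,\dd\xi)$ restricted to the Fourier band of $P_{<I}$ and $P_k$ respectively, while for $k\in I$, $E_k$ is the Banach space of tempered distributions with Fourier support in $\{2^{k-1}\leq|\xi|\leq 2^{k+1}\}$ and finite $\vvvert\centerdot\vvvert$ norm, rescaled by $\lambda^{-1/4}$. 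The definition of $\|\centerdot\|_{Y_\lambda}$ then asserts precisely that the Littlewood--Paley decomposition map
\[ T:f\longmapsto\bigl(P_{<I}f,(P_kf)_{k\in I},(P_kf)_{k>k_\lambda+1}\bigr) \]
is an isometric embedding of $Y_\lambda$ into the $\ell^2$-sum $E:=E_{-1}\oplus\bigoplus_{k\in I}^{\ell^2}E_k\oplus\bigoplus_{k>k_\lambda+1}^{\ell^2}E_k$.

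The Banach property of $Y_\lambda$ is then immediate: given a Cauchy sequence $\{f_n\}$, each component $\{T_af_n\}$ is Cauchy in $E_a$ and converges to some $g_a\in E_a$. Since $m_\lambda^{-1/2}$ is bounded on each frequency band and $\vvvert\centerdot\vvvert$ controls $L^2(D_j)$ on every annulus, convergence in each $E_a$ implies convergence in $\mathcal{S}'(\R^d)$, so $f:=g_{-1}+\sum_{k\in I}g_k+\sum_{k>k_\lambda+1}g_k$ is a well-defined tempered distribution and continuity of the projectors on $\mathcal{S}'(\R^d)$ yields $T_af=g_a$, i.e.\ $f\in Y_\lambda$ and $f_n\to f$. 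For density of $\mathcal{S}(\R^d)$ in $Y_\lambda$, I will first truncate the high-frequency tail --- $f_N=P_{<I}f+\sum_{k\in I}P_kf+\sum_{k_\lambda+1<k\leq N}P_kf$ satisfies $\|f-f_N\|_{Y_\lambda}\to 0$ by $\ell^2$-summability --- and then approximate each of the finitely many remaining Littlewood--Paley components by Schwartz functions: Schwartz density in each weighted $L^2$ component is standard, and for the four critical components a spatial truncation of $P_kf$ to $|x|\leq 2^J$ (whose cost $\sum_{j>J}2^{j/2}\|P_kf\|_{L^2(D_j)}$ tends to zero) followed by mollification yields a Schwartz approximant in the $\vvvert\centerdot\vvvert$-norm.

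The heart of the lemma is the duality $(Y_\lambda)^\ast\cong Y_\lambda^\ast$. The pairing on Schwartz functions is the tempered-distribution pairing $\langle u,f\rangle=\int u\bar f\,\dd x$. Decomposing both sides via Littlewood--Paley, all cross terms $\langle P_ju,P_kf\rangle$ with $|j-k|\geq 2$ vanish by Fourier-support disjointness, and the remaining near-diagonal terms are estimated by Parseval combined with Cauchy--Schwarz with the weight pair $(m_\lambda^{1/2},m_\lambda^{-1/2})$ on the $L^2$-type components, and by the classical Agmon--H\"ormander duality $|\sum_j\int_{D_j}u\bar f|\leq\vvvert u\vvvert_\ast\vvvert f\vvvert$ on the critical components. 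A final $\ell^2$-Cauchy--Schwarz in $k$ gives $|\langle u,f\rangle|\lesssim\|u\|_{Y_\lambda^\ast}\|f\|_{Y_\lambda}$, producing a continuous injection $Y_\lambda^\ast\hookrightarrow(Y_\lambda)^\ast$ after extending by density. For the reverse direction, I will take $\varphi\in(Y_\lambda)^\ast$, transport it through $T$ to the closed subspace $T(Y_\lambda)\subset E$, and apply Hahn--Banach to extend to a bounded functional on $E$; the dual of $E$ is the $\ell^2$-sum of duals of the $E_a$, with each weighted-$L^2$ dual identified via Riesz and each critical $(E_k,\vvvert\centerdot\vvvert)^\ast$ identified via the Agmon--H\"ormander duality. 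Reading off these components produces $u\in Y_\lambda^\ast$ representing $\varphi$, with $\|u\|_{Y_\lambda^\ast}\lesssim\|\varphi\|$ built into the Hahn--Banach extension; density of $\mathcal{S}(\R^d)$ in $Y_\lambda^\ast$ is handled by the same high-frequency truncation followed by the component-wise Schwartz approximation of the predual pieces.

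The hardest part will be the bookkeeping around the Fourier-support overlap of adjacent Littlewood--Paley pieces: $P_{<I}$ and $P_{k_\lambda-2}$, or $P_k$ and $P_{k\pm 1}$, have overlapping supports, so $T$ is not surjective onto $E$ and the duality must pass through Hahn--Banach rather than through a direct component-by-component decomposition. However, each $\xi\in\R^d$ is touched by at most three projectors, so the overlap costs only an absolute multiplicative constant; this is the same bounded-overlap observation that makes the near-diagonal pairings absorbable and the series $\sum_a g_a$ convergent in $\mathcal{S}'(\R^d)$ in the completeness argument.
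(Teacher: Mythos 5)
Your overall route --- realizing $Y_\lambda$ as an isometric copy of a closed subspace of an $\ell^2$-sum whose blocks are weighted $L^2$ spaces off the critical scales and Agmon--H\"ormander blocks on them --- is the same ``Besov space with exotic norms at the critical frequencies'' picture that the appendix invokes, and your completeness argument is sound. The density argument also works, with the usual caveat that a Schwartz approximant of a single piece $P_k f$ must be kept frequency-localized in a slightly fattened band (spatial truncation destroys compact Fourier support), so that its leakage into the \emph{other} components of the $\|\centerdot\|_{Y_\lambda}$-norm can be controlled; this is routine but should be said.

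The genuine gap is in the duality, exactly at the point you set aside as ``only an absolute multiplicative constant''. The overlap of adjacent projectors is harmless where the two abutting blocks carry comparable norms, but not at the two junctions where a $\vvvert\centerdot\vvvert$-block meets a weighted-$L^2$ block, namely $P_{k_\lambda+1}$ against $P_{k_\lambda+2}$ and $P_{k_\lambda-2}$ against $P_{<I}$. In the forward direction your list of estimates omits the mixed near-diagonal terms such as $\langle P_{k_\lambda+1}u,P_{k_\lambda+2}f\rangle$: neither weighted Plancherel nor the $\vvvert\centerdot\vvvert_\ast$--$\vvvert\centerdot\vvvert$ duality applies as stated, since $\vvvert\centerdot\vvvert$ is not controlled by $L^2$ nor $L^2$ by $\vvvert\centerdot\vvvert_\ast$; one must first use the symmetry of the Fourier-side integrand $\psi_{k_\lambda+1}\psi_{k_\lambda+2}\widehat{u}\,\overline{\widehat{f}}$ to rewrite the term as $\langle P_{k_\lambda+2}u,P_{k_\lambda+1}f\rangle$ and then use $\vvvert\centerdot\vvvert_\ast\leq\|\centerdot\|_{L^2}$. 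More seriously, in the reverse direction the distribution reassembled from the Hahn--Banach components, $u=P_{<I}u_{-1}+\sum_k P_k u_k$, has $P_{k_\lambda+2}u$ containing the term $P_{k_\lambda+2}(P_{k_\lambda+1}u_{k_\lambda+1})$; membership in $Y_\lambda^\ast$ demands this lie in weighted $L^2$, whereas $u_{k_\lambda+1}$ is only a $\vvvert\centerdot\vvvert_\ast$-type object, and $\vvvert\centerdot\vvvert_\ast$ does not control $L^2$. This is not bookkeeping: take $\phi$ radial, pick $\rho\in(2^{k_\lambda+1},2^{k_\lambda+2})$ where both $\psi(\centerdot/2^{k_\lambda+1})$ and $\psi(\centerdot/2^{k_\lambda+2})$ are bounded below on $\{\xi:|\xi|=\rho\}$ (possible since they form a partition of unity on that annulus), and let $w$ be the inverse Fourier transform of $g\,\dd S_\rho$ with $g\in L^2$ of the sphere. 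The Agmon--H\"ormander trace theorem gives $|\langle w,f\rangle|=|\int_{|\xi|=\rho}g\overline{\widehat{f}}\,\dd S_\rho|\lesssim_{\lambda}\vvvert P_{k_\lambda+1}f\vvvert\lesssim_\lambda\|f\|_{Y_\lambda}$ because $\widehat{f}=\widehat{P_{k_\lambda+1}f}/\psi(\centerdot/2^{k_\lambda+1})$ on that sphere, so $w$ defines a bounded functional on $Y_\lambda$; yet $\widehat{P_{k_\lambda+2}w}$ is a nonzero singular measure, so $\|m_\lambda^{1/2}\widehat{P_{k_\lambda+2}w}\|_{L^2}=\infty$ and $w\notin Y_\lambda^\ast$ as defined. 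Hence the surjectivity of the natural map $Y_\lambda^\ast\to(Y_\lambda)^\ast$ cannot be obtained by reading off Hahn--Banach components and summing; closing this step requires either projectors whose supports are disjoint across these two junctions or a reformulation of the $Y_\lambda^\ast$ conditions on the overlap annuli, and in either case an argument that is genuinely missing from the proposal.
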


\begin{lemma}\label{lem:Zspaces}\sl The pair
$(Z_{\lambda,p^\prime}, \| \centerdot \|_{Z_{\lambda,p^\prime}})$
is a reflexive Banach space and its dual is 
isomorphic to $(Z_{\lambda, p}^\ast, \| \centerdot \|_{Z_{\lambda, p}^\ast})$ with $p$ and $p^\prime$ duals. 
The Schwartz class $\mathcal{S}(\R^d)$ is dense in $Z_{\lambda,p^\prime}$ and 
$Z_{\lambda, p}^\ast$ with their corresponding norms.
\end{lemma}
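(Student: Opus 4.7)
The plan is to realise $(Z_{\lambda,p^\prime},\|\cdot\|_{Z_{\lambda,p^\prime}})$ as a closed subspace of a reflexive product Banach space, and derive the four claims of the lemma from abstract Banach-space theory together with a concrete truncation/mollification argument for density.

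First I would introduce the linear map
\[
Tg = \Big( m_\lambda^{-1/2}\widehat{P_{<I}g},\ \big(\lambda^{d(1/p^\prime-1/p_d^\prime)/2}P_kg\big)_{k\in I},\ \big(m_\lambda^{-1/2}\widehat{P_kg}\big)_{k>k_\lambda+1} \Big)
\]
from $Z_{\lambda,p^\prime}$ into
\[
E_{p^\prime} \;=\; L^2(\R^d)\,\times\,\prod_{k\in I}L^{p^\prime}(\R^d)\,\times\,\ell^2\!\big(\{k>k_\lambda+1\};\,L^2(\R^d)\big),
\]
which is well defined because $m_\lambda^{-1/2}$ is bounded on the Fourier supports of $P_{<I}g$ and of $P_kg$ for $k>k_\lambda+1$, and it is an isometry onto its image by the very definition of $\|\cdot\|_{Z_{\lambda,p^\prime}}$. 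A parallel map $T^\ast$, with $m_\lambda^{1/2}$ and $L^p$ in place of $m_\lambda^{-1/2}$ and $L^{p^\prime}$, embeds $Z^\ast_{\lambda,p}$ isometrically into $E_p$.

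Next I would establish the continuous embedding $Z_{\lambda,p^\prime}\hookrightarrow\mathcal{S}^\prime(\R^d)$: for $\phi\in\mathcal{S}(\R^d)$, decomposing $\langle g,\phi\rangle$ along the Littlewood--Paley pieces of $g$ and applying Plancherel/H\"older to each term bounds it by the corresponding factor of $\|g\|_{Z_{\lambda,p^\prime}}$ times a Schwartz seminorm of $\phi$ (the rapid decay of $\widehat\phi$ absorbs the $2^k$-losses arising when trading $m_\lambda^{-1/2}$ for $m_\lambda^{1/2}$ on the high-frequency pieces). Granted this embedding, a Cauchy sequence $\{g_n\}$ in $Z_{\lambda,p^\prime}$ has a limit $g\in\mathcal{S}^\prime$ whose image $Tg$ coincides, by uniqueness of the Fourier transform on tempered distributions, with the componentwise $E_{p^\prime}$-limit of $Tg_n$; hence $T(Z_{\lambda,p^\prime})$ is closed in $E_{p^\prime}$ and $Z_{\lambda,p^\prime}$ is a Banach space. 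Since $1<p^\prime<\infty$ on the range considered, each factor of $E_{p^\prime}$ is reflexive, and a closed subspace of a reflexive space is reflexive, so $Z_{\lambda,p^\prime}$ is reflexive as well.

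For the dual identification, I would extend any $\ell\in(Z_{\lambda,p^\prime})^\ast$ via Hahn--Banach to some $\tilde\ell\in E_{p^\prime}^\ast\cong E_p$, represented by data $(a_0,(b_k),(c_k))$; unwinding via Plancherel reexpresses $\ell(g)$ as a distributional pairing $\langle g,u\rangle$, where the Littlewood--Paley pieces of $u$ are prescribed from $(a_0,(b_k),(c_k))$ (schematically, $m_\lambda^{1/2}\widehat{P_{<I}u}=\overline{a_0}$, $\lambda^{d(1/p^\prime-1/p_d^\prime)/2}P_ku=\overline{b_k}$ for $k\in I$, and $m_\lambda^{1/2}\widehat{P_ku}=\overline{c_k}$ for $k>k_\lambda+1$), yielding $u\in Z^\ast_{\lambda,p}$ with $\|u\|_{Z^\ast_{\lambda,p}}\lesssim\|\ell\|$. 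Density of $\mathcal{S}(\R^d)$ in both $Z_{\lambda,p^\prime}$ and $Z^\ast_{\lambda,p}$ would be obtained by first truncating to a finite frequency band $g_N=P_{<I}g+\sum_{k_\lambda-2\leq k\leq N}P_kg$ (which converges in norm by dominated convergence on the defining series), then convolving with a Schwartz mollifier and multiplying by a slowly decaying Schwartz cutoff $\chi(\cdot/R)$, using that these operations commute with the finitely many projectors $P_k$ left in $g_N$ up to errors that vanish as $R\to\infty$.

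The main obstacle is the dual identification: because the Littlewood--Paley projectors appearing in the definition of $\|\cdot\|_{Z_{\lambda,p^\prime}}$ have slightly overlapping Fourier supports (notably $P_{<I}$ with $P_{k_\lambda-2}$, and consecutive $P_k$'s), the image $T(Z_{\lambda,p^\prime})$ is not a product-type subspace of $E_{p^\prime}$, so one must carefully verify that the pieces $(a_0,(b_k),(c_k))$ supplied by Hahn--Banach patch into a bona fide tempered distribution $u\in Z^\ast_{\lambda,p}$ which reproduces the original functional $\ell$.
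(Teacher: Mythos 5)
Your proof is correct and is in essence the argument the paper itself gives: the paper's justification of this lemma is only a short remark observing that, since $m_\lambda(\xi)^{1/2}\simeq\lambda^{1/2}$ on the Fourier support of $P_{<I}g$ and $m_\lambda(\xi)^{1/2}\simeq 2^{k}$ on that of $P_k g$ for $k>k_\lambda+1$, the norm is a non-homogeneous Besov-type norm whose pieces are pairwise dual, and your product-space (retract) embedding is the standard way of implementing exactly that remark. The patching step you flag is resolved as for ordinary Besov spaces --- the Hahn--Banach extension restricted to $\mathcal{S}(\R^d)$ already defines the tempered distribution $u$, and each Littlewood--Paley piece of $u$ is then estimated by testing against frequency-localized functions, the finitely many overlaps such as $P_{<I}P_{k_\lambda-2}\neq 0$ being absorbed via Bernstein's inequality --- so the proof is complete wherever $p^\prime>1$ (hence for all $d\geq 3$; only the conventional endpoint $p_2^\prime=1$ in dimension two escapes the reflexivity claim).
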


Note that $2^k \simeq \lambda^{1/2}$  when $k \in I$,
$ m_\lambda(\xi)^{1/2} |\widehat{P_{<I} f}(\xi)| \simeq \lambda^{1/2} |\widehat{P_{<I} f}(\xi)|$,
and $ m_\lambda(\xi)^{1/2} |\widehat{P_k f}(\xi)| \simeq 2^{k} |\widehat{P_k f}(\xi)|$
when $k > k_\lambda + 1$. Thus, the norms of the spaces $Y_\lambda$, 
$Y_\lambda^\ast$, $Z_{\lambda,p^\prime}$ and $Z_{\lambda, p}^\ast$ can be re-written 
similarly to the norms of non-homogeneous Besov spaces with different 
weights and norms on the critical scales $k \in I$. This remark is 
the key to justify that these spaces are Banach and
$\mathcal{S}(\R^d)$ is dense with respect to the corresponding 
topologies. The duality also works because of the same principle  
---since the norms in the corresponding pieces are taken to be dual. To be more precise, note that $\vvvert \centerdot \vvvert_\ast$ is the dual norm of $\vvvert \centerdot \vvvert$ and not the other way around, while $\| \centerdot \|_{L^{q_d^\prime}}$ and $\| \centerdot \|_{L^{q_d}}$ are dual of each other. Hence, $Z_{\lambda,p^\prime}$ is reflexive and $Y_\lambda$ is not.

Now, we show how to derive the propositions 
\ref{prop:density}, \ref{prop:Xlambda} and \ref{prop:Xlambdaast} 
stated in the section \ref{sec:scattering}. Start by the first of 
these three propositions. The density of
$\mathcal{S}(\R^d)$ in $Y_\lambda$ and $Z_\lambda$ is explicitly 
stated in the lemmas \ref{lem:Yspaces} and \ref{lem:Zspaces}, in 
particular, the density also holds for
$X_\lambda = Y_\lambda + Z_\lambda$ with its corresponding norm. This 
proves the proposition \ref{prop:density}. Now, we turn to the 
proposition \ref{prop:Xlambda}.
Since $(Y_\lambda, \| \centerdot \|_{Y_\lambda})$ and
$(Z_\lambda, \| \centerdot \|_{Z_\lambda})$ are 
Banach spaces and $Y_\lambda$ and $Z_\lambda$ are subspaces of
$\mathcal{S}^\prime (\R^d)$, we have by Theorem 1.3 in 
\cite{zbMATH00192914} that
$(X_\lambda, \| \centerdot \|_{X_\lambda} )$ is a Banach space. The 
identity \eqref{id:characterizationXlambda} 
is a standard property of Banach spaces (see Corollary 1.4 in 
\cite{zbMATH05633610}). This concludes the proof of the proposition 
\ref{prop:Xlambda}. Finally, let us focus on the last of these three 
propositions. It is a well-known fact ---since $\mathcal{S}(\R^d)$ is 
dense in $Y_\lambda$ and $Z_\lambda$--- that $(X_\lambda^\ast, \| \centerdot \|_{X_\lambda^\ast} )$ is isomorphic to the space $Y_\lambda^\ast \cap Z_\lambda^\ast$ endowed with the norm
$ \max \{ \| \centerdot \|_{Y_\lambda^\ast} , \| \centerdot \|_{Z_\lambda^\ast} \}$ (see 2 in the section Exercises and Further Results 
for Chapter 3 of the book \cite{zbMATH00192914}). 
Note that this later space is actually isomorphic 
to the space described in the proposition \ref{prop:Xlambdaast} 
endowed with the norm \eqref{term:Xlambdaast}. To finish the proof of 
this proposition, it is enough to check the density of
$\mathcal{S}(\R^d)$ in $X_\lambda^\ast$ with its corresponding norm. 
Note that this holds because $\mathcal{S}(\R^d)$ is dense in
$Y_\lambda^\ast$ and $Z_\lambda^\ast$, and the norm $\| \centerdot \|_{X_\lambda^\ast}$ is equivalent to $ \max \{ \| \centerdot \|_{Y_\lambda^\ast} , \| \centerdot \|_{Z_\lambda^\ast} \}$.

%

\bibliography{references}{}

\begin{thebibliography}{10}

\bibitem{MR0466902}
S.~Agmon and L.~H{\"o}rmander.
\newblock Asymptotic properties of solutions of differential equations with
  simple characteristics.
\newblock {\em J. Analyse Math.}, 30:1--38, 1976.

\bibitem{zbMATH03493415}
Shmuel {Agmon}.
\newblock {Spectral properties of Schr\"odinger operators and scattering
  theory.}
\newblock {\em {Ann. Sc. Norm. Super. Pisa, Cl. Sci., IV. Ser.}}, 2:151--218,
  1975.

\bibitem{zbMATH00192914}
Colin {Bennett} and Robert {Sharpley}.
\newblock {\em {Interpolation of operators.}}
\newblock Boston, MA etc.: Academic Press, Inc., 1988.

\bibitem{zbMATH05633610}
Haim {Brezis}.
\newblock {\em {Functional analysis, Sobolev spaces and partial differential
  equations.}}
\newblock New York, NY: Springer, 2011.

\bibitem{doi:10.1142/S0219530519500015}
Pedro Caro, Tapio Helin, and Matti Lassas.
\newblock Inverse scattering for a random potential.
\newblock {\em Analysis and Applications}, 17(04):513--567, 2019.

\bibitem{zbMATH06534426}
Pedro {Caro} and Keith~M. {Rogers}.
\newblock {Global uniqueness for the Calder\'on problem with Lipschitz
  conductivities.}
\newblock {\em {Forum Math. Pi}}, 4:28, 2016.

\bibitem{zbMATH00939857}
David {Colton} and Andreas {Kirsch}.
\newblock {A simple method for solving inverse scattering problems in the
  resonance region.}
\newblock {\em {Inverse Probl.}}, 12(4):383--393, 1996.

\bibitem{zbMATH06061716}
David {Colton} and Rainer {Kress}.
\newblock {\em {Inverse acoustic and electromagnetic scattering theory. 3rd
  ed.}}, volume~93.
\newblock New York, NY: Springer, 3rd ed. edition, 2013.

\bibitem{zbMATH06156446}
David {Dos Santos Ferreira}, Carlos~E. {Kenig}, and Mikko {Salo}.
\newblock {Determining an unbounded potential from Cauchy data in admissible
  geometries.}
\newblock {\em {Commun. Partial Differ. Equations}}, 38(1-3):50--68, 2013.

\bibitem{zbMATH02138311}
M.~{Goldberg} and W.~{Schlag}.
\newblock {A limiting absorption principle for the three-dimensional
  Schr\"odinger equation with \(L^p\) potentials.}
\newblock {\em {Int. Math. Res. Not.}}, 2004(75):4049--4071, 2004.

\bibitem{zbMATH06490961}
Boaz {Haberman}.
\newblock {Uniqueness in Calder\'on's problem for conductivities with unbounded
  gradient.}
\newblock {\em {Commun. Math. Phys.}}, 340(2):639--659, 2015.

\bibitem{zbMATH06145493}
Boaz {Haberman} and Daniel {Tataru}.
\newblock {Uniqueness in Calder\'on's problem with Lipschitz conductivities.}
\newblock {\em {Duke Math. J.}}, 162(3):497--516, 2013.

\bibitem{zbMATH01670370}
Peter {H\"ahner} and Thorsten {Hohage}.
\newblock {New stability estimates for the inverse acoustic inhomogeneous
  medium problem and applications.}
\newblock {\em {SIAM J. Math. Anal.}}, 33(3):670--685, 2001.

\bibitem{zbMATH01950198}
Lars {H\"ormander}.
\newblock {\em {The analysis of linear partial differential operators. I:
  Distribution theory and Fourier analysis. Reprint of the 2nd edition 1990.}}
\newblock Berlin: Springer, reprint of the 2nd edition 1990 edition, 2003.

\bibitem{zbMATH02123716}
Lars {H\"ormander}.
\newblock {\em {The analysis of linear partial differential operators. II:
  Differential operators with constant coefficients. Reprint of the 1983
  edition.}}
\newblock Berlin: Springer, reprint of the 1983 edition edition, 2005.

\bibitem{zbMATH05014647}
Alexandru~D. {Ionescu} and Wilhelm {Schlag}.
\newblock {Agmon-Kato-Kuroda theorems for a large class of perturbations.}
\newblock {\em {Duke Math. J.}}, 131(3):397--440, 2006.

\bibitem{zbMATH04050093}
C.~E. {Kenig}, A.~{Ruiz}, and C.~D. {Sogge}.
\newblock {Uniform Sobolev inequalities and unique continuation for second
  order constant coefficient differential operators.}
\newblock {\em {Duke Math. J.}}, 55:329--347, 1987.

\bibitem{zbMATH06933409}
Andrea {Mantile}, Andrea {Posilicano}, and Mourad {Sini}.
\newblock {Uniqueness in inverse acoustic scattering with unbounded gradient
  across Lipschitz surfaces.}
\newblock {\em {J. Differ. Equations}}, 265(9):4101--4132, 2018.

\bibitem{zbMATH04105476}
Adrian~I. {Nachman}.
\newblock {Reconstructions from boundary measurements.}
\newblock {\em {Ann. Math. (2)}}, 128(3):531--576, 1988.

\bibitem{zbMATH04129351}
R.G. {Novikov}.
\newblock {Multidimensional inverse spectral problem for the equation $-\Delta
  \psi -(v(x)-Eu(x))\psi =0$.}
\newblock {\em {Funct. Anal. Appl.}}, 22(4):263--272, 1988.

\bibitem{zbMATH00926323}
Petri {Ola} and Erkki {Somersalo}.
\newblock {Electromagnetic inverse problems and generalized Sommerfeld
  potentials.}
\newblock {\em {SIAM J. Appl. Math.}}, 56(4):1129--1145, 1996.

\bibitem{zbMATH04028038}
A.G. {Ramm}.
\newblock {Recovery of the potential from fixed energy scattering data.}
\newblock {\em {Inverse Probl.}}, 4(3):877--886, 1988.

\bibitem{zbMATH00055931}
A.~{Ruiz} and L.~{Vega}.
\newblock {Unique continuation for the solutions of the Laplacian plus a
  drift.}
\newblock {\em {Ann. Inst. Fourier}}, 41(3):651--663, 1991.

\bibitem{zbMATH00001021}
Plamen {Stefanov}.
\newblock {Stability of the inverse problem in potential scattering at fixed
  energy.}
\newblock {\em {Ann. Inst. Fourier}}, 40(4):867--884, 1990.

\bibitem{zbMATH04015323}
John {Sylvester} and Gunther {Uhlmann}.
\newblock {A global uniqueness theorem for an inverse boundary value problem.}
\newblock {\em {Ann. Math. (2)}}, 125:153--169, 1987.

\end{thebibliography}
\bibliographystyle{plain}

\end{document}